\documentclass[11pt]{amsart}
\usepackage{amsmath,amssymb}
\usepackage{verbatim}
\usepackage{color}
\usepackage{hyperref}

\begin{document}

	\newtheorem{thm}{Theorem}[section]
	\newtheorem{theorem}{Theorem}[section]
	\newtheorem{lem}[thm]{Lemma}
	\newtheorem{lemma}[thm]{Lemma}
	\newtheorem{prop}[thm]{Proposition}
	\newtheorem{proposition}[thm]{Proposition}
	\newtheorem{corollary}[thm]{Corollary}
	\newtheorem{definition}[thm]{Definition}
	\newtheorem{remark}[thm]{Remark}
	\newtheorem{conjecture}[theorem]{Conjecture}
	
	\numberwithin{equation}{section}
	
	\newcommand{\Z}{{\mathbb Z}} 
	\newcommand{\Q}{{\mathbb Q}}
	\newcommand{\R}{{\mathbb R}}
	\newcommand{\C}{{\mathbb C}}
	\newcommand{\N}{{\mathbb N}}
	\newcommand{\FF}{{\mathbb F}}
	\newcommand{\fq}{\mathbb{F}_q}
	\newcommand{\X}{{\mathbb {X}}}
	\newcommand{\rmk}[1]{\footnote{{\bf Comment:} #1}}
	
	\newcommand{\bfA}{{\boldsymbol{A}}}
	\newcommand{\bfY}{{\boldsymbol{Y}}}
	\newcommand{\bfX}{{\boldsymbol{X}}}
	\newcommand{\bfZ}{{\boldsymbol{Z}}}
	\newcommand{\bfa}{{\boldsymbol{a}}}
	\newcommand{\bfy}{{\boldsymbol{y}}}
	\newcommand{\bfx}{{\boldsymbol{x}}}
	\newcommand{\bfz}{{\boldsymbol{z}}}
	\newcommand{\F}{\mathcal{F}}
	\newcommand{\Gal}{\mathrm{Gal}}
	\newcommand{\Fr}{\mathrm{Fr}}
	\newcommand{\Hom}{\mathrm{Hom}}
	\newcommand{\GL}{\mathrm{GL}}
	\newcommand{\Res}{\mathrm{Res}}

	\renewcommand{\mod}{\;\operatorname{mod}}
	\newcommand{\ord}{\operatorname{ord}}
	\newcommand{\TT}{\mathbb{T}}
	\renewcommand{\i}{{\mathrm{i}}}
	\renewcommand{\d}{{\mathrm{d}}}
	\renewcommand{\^}{\widehat}
	\newcommand{\HH}{\mathbb H}
	\newcommand{\Vol}{\operatorname{vol}}
	\newcommand{\area}{\operatorname{area}}
	\newcommand{\tr}{\operatorname{tr}}
	\newcommand{\norm}{\mathcal N} 
	\newcommand{\intinf}{\int_{-\infty}^\infty}
	\newcommand{\ave}[1]{\left\langle#1\right\rangle} 
	\newcommand{\Var}{\operatorname{Var}}
	\newcommand{\Prob}{\operatorname{Prob}}
	\newcommand{\sym}{\operatorname{Sym}}
	\newcommand{\disc}{\operatorname{disc}}
	\newcommand{\CA}{{\mathcal C}_A}
	\newcommand{\cond}{\operatorname{cond}} 
	\newcommand{\lcm}{\operatorname{lcm}}
	\newcommand{\Kl}{\operatorname{Kl}} 
	\newcommand{\leg}[2]{\left( \frac{#1}{#2} \right)}  
	\newcommand{\Li}{\operatorname{Li}}

	\newcommand{\sumstar}{\sideset \and^{*} \to \sum}
	
	\newcommand{\LL}{\mathcal L} 
	\newcommand{\sumf}{\sum^\flat}
	\newcommand{\Hgev}{\mathcal H_{2g+2,q}}
	\newcommand{\USp}{\operatorname{USp}}
	\newcommand{\conv}{*}
	\newcommand{\dist} {\operatorname{dist}}
	\newcommand{\CF}{c_0} 
	\newcommand{\kerp}{\mathcal K}
	
	\newcommand{\Cov}{\operatorname{cov}}
	\newcommand{\Sym}{\operatorname{Sym}}
	
	\newcommand{\ES}{\mathcal S} 
	\newcommand{\EN}{\mathcal N} 
	\newcommand{\EM}{\mathcal M} 
	\newcommand{\Sc}{\operatorname{Sc}} 
	\newcommand{\Ht}{\operatorname{Ht}}
	
	\newcommand{\E}{\operatorname{E}} 
	\newcommand{\sign}{\operatorname{sign}} 
	
	\newcommand{\divid}{d} 
	\newcommand{\A}{{\mathbb{A}}}
	\newcommand{\h}{\mathbb{H}_{2g+1}}
	\newcommand{\p}{\mathbb{P}_{2g+1}}
	\newcommand{\f}{\mathbb{F}_{q}[T]}
	\newcommand{\z}{\zeta_{\A}}
	\newcommand{\lo}{\log_q}
	\newcommand{\x}{\chi}
	\newcommand{\xx}{\mathcal{X}}
	\newcommand{\lL}{\mathcal{L}}
	\newcommand{\e}{\varepsilon}
	\newcommand{\w}{\omega}
	\newcommand{\pp}{\text{\textbf{P}}}	
	\newcommand{\mm}{\mathcal{M}}
	\newcommand{\prof}{\textbf{Proof.}}	
	\newcommand{\RR}{\text{Re}}	
	\newcommand{\II}{\text{Im}}
	\newcommand{\D}{\mathcal{D}}

	\title[$n$-level Densities]
	{Correlations of zeros of a family of $L$-functions in function fields with symplectic symmetry}
	
	\author{Julio Andrade}
	\address{Department of Mathematics and Statistics, University of Exeter, Exeter, EX4 4QF, United Kingdom}
	\email{j.c.andrade@exeter.ac.uk}
	
	\author{ASMAA SHAMESALDEEN}
	\address{Department of Mathematics, The Public Authority for Applied Education and Training, Kuwait}
	\email{aa.shamsalldin@paaet.edu.kw}

	\subjclass[2010]{Primary 11M38; Secondary 11M06, 11G20, 11M50, 14G10}
	\keywords{ratios conjecture; $n$-level density; $L$-functions; finite fields; function fields}
	
	\begin{abstract}
	 In this paper, we adapt the framework developed by Mason and Snaith to investigate the $n$-level density of zeros in the context of function fields. Specifically, we derive explicit formulas for the $n$-level density of zeros in families of quadratic Dirichlet $L$-functions associated with hyperelliptic curves of genus $g$ over $\mathbb{F}_q$. Employing Mason and Snaith’s method, we obtain precise expressions for the $1$-level density in these families and extend the approach to higher-level densities. Furthermore, we apply the method to derive formulas for the $n$-level density of zeros in families of $L$-functions associated with prime characters. Our results are consistent with the findings of Andrade, Jung, and Shamesaldeen in the case $n=1$.
	\end{abstract}
	\date{\today}
	
	\maketitle

	
	\section{Introduction}\label{into}

	A common theme in analytic number theory is the study of the distribution of zeros of $L$-functions, and one of the main reasons for that is that such statistical quantities contain arithmetic information which appear in many parts of number theory. In the 1970s, Montgomery \cite{Montgomery} and Dyson \cite{Dyson} introduced groundbreaking ideas that connected analytic number theory with random matrix theory. Specifically, Montgomery \cite{Montgomery}, under the assumption of the Riemann Hypothesis, investigated the pair correlation of the zeros of the Riemann zeta function. His work revealed that, for test functions whose Fourier transforms are sufficiently restricted, this pair correlation exhibits a remarkable agreement with the eigenvalue statistics of random matrices drawn from the Gaussian Unitary Ensemble (GUE). 
	
	In the 1990s, Katz and Sarnak \cite{Kats&Sarnak1,Kats&Sarnak2} proposed a significant programme suggesting that each family of $L$-functions has an associated symmetry group. They investigated the statistical properties of zeros of $L$-functions near the critical line, where complex zeros are expected to lie. Their findings indicated that within a family of $L$-functions, these zeros exhibit statistical behavior similar to the eigenvalues of randomly selected matrices from the classical compact groups $U(N), O(N)$ or $US_p(2N)$, with respect to the Haar measure. In the appropriate limit, the zero statistics converge to the eigenvalue statistics of large matrices as a key parameter of the family grows indefinitely. Their conjecture has since inspired extensive research, including studies of leading-order statistical behavior \cite{1,6,2,7,3,4,Rudnick&Sarnak,5} and investigations into lower-order terms using the Ratios Conjecture \cite{8,9,10,11,12}. 
	
	Bogomolny and Keating \cite{BK1,BK2} furthered the study of the Riemann zeta function's zeros by investigating its $n$-point correlation functions, achieving the significant advancement of incorporating lower-order terms into their analysis. Their calculations crucially relied on the Hardy-Littlewood conjecture concerning twin primes and a detailed examination of the influence of prime numbers on the lower-order terms within the statistical distribution of the Riemann zeros. 
	
    Building upon these findings, Rudnick and Sarnak \cite{Rudnick&Sarnak} extended the analysis of two-point and three-point correlations to the general case of $n$-point correlations. By imposing specific constraints on the support of the Fourier transform of the test function, they were able to establish that the $n$-point correlation statistics of the zeros of principal $L$-functions exhibit a direct correspondence with the analogous theorems in Random Matrix Theory. 
	
    In 2000, Keating and Snaith \cite{Keating-Snaith1, Keating-Snaith2} proposed a seminal conjecture concerning the asymptotic behavior of the moments of the Riemann zeta function. By modeling the zeta function using the distribution of characteristic polynomials of random unitary matrices from the Circular Unitary Ensemble (CUE), they were able to put forward a conjecture for all the moments of the Riemann zeta function and other $L$-functions. This approach significantly deepened our understanding of the profound connections between $L$-functions and random matrices. 
	
    While there is substantial agreement between predictions derived from $L$-functions and those from the characteristic polynomials of random matrices—particularly regarding main terms—lower-order terms in many asymptotic formulas remain less well understood across various families of $L$-functions. This is largely due to the influence of subtle arithmetic features, which can disrupt the otherwise universal behavior predicted by random matrix theory. In 2008, building on the methodology developed by Conrey et al. \cite{CFKRS}, Conrey, Farmer, and Zirnbauer \cite{CFZ} formulated the Ratios Conjecture, offering conjectural formulas for the averages of ratios of products of shifted $L$-functions. This framework provided a powerful and systematic tool for predicting both leading and lower-order terms in the statistical behavior of $L$-function zeros, significantly enhancing our ability to address complex problems such as the $n$-level density in families of $L$-functions. 
	
    Building on the conjectures and techniques developed in \cite{CFZ}, Conrey and Snaith \cite{Conrey&Snaith,Conrey&Snaith1,Conrey&Snaith2,Conrey&Snaith3} investigated a range of number-theoretic applications of the Ratios Conjecture. They applied this framework to analyze lower-order terms in the zero statistics of $L$-functions, mollified moments, and discrete averages over the zeros of the Riemann zeta function. Within the setting of $U(N)$ random matrix theory, they showed that restricting the support of the test function yields notable simplifications in the $n$-point correlation functions—an approach consistent with the methodology of Rudnick and Sarnak \cite{Rudnick&Sarnak}. Furthermore, they derived an asymptotic formula for the one-level density in the symplectic family of $L$-functions, providing deeper insight into the fine-scale distribution of zeros. 
	
	Huynh, Miller, and Morrison \cite{9}, along with Miller in \cite{10,11}, computed the one-level density for the symplectic family of quadratic Dirichlet $L$-functions, showing agreement with predictions from the Ratios Conjecture up to an error of $O(X^{-1/2+\varepsilon})$ for test functions supported in the intervals $(-1/3, 1/3)$ and $(-1, 1)$. Building on this work, Mason and Snaith \cite{mason&Snaith} extended these techniques to families of $L$-functions exhibiting orthogonal or symplectic symmetry. By reformulating the Ratios Conjecture using set notation, they derived explicit expressions for the $n$-level densities of zeros in these families. Their approach allows for flexible adjustments based on the support of the test function, enabling direct and precise comparisons with rigorous number-theoretic results for $n$-level densities.

    The body of research outlined above, along with contributions from many other authors, underscores the profound connections between $L$-functions and random matrix theory, emphasizing the importance of studying statistical properties of $L$-functions to uncover their underlying arithmetic and analytic structure.

In the function field setting, Andrade and Keating \cite{a&kConInMo} extended the Ratios Conjecture to quadratic Dirichlet $L$-functions associated with monic and square-free polynomials of degree $2g + 1$ in $\mathbb{F}_q[T]$. Their approach followed the well-established methodology introduced by Conrey et al. \cite{CFKRS,CFZ}. 
	
	\begin{conjecture}[Andrade and Keating]\label{Andrade&Keating Ratios Conjecture}\label{ratios}
		Suppose that the real parts of $\alpha_k$ and $\gamma_m$ are positive and that $q$ odd is the fixed cardinality of the finite field $\mathbb{F}_q$. Then we have
		
		\begin{equation*}
			\begin{split}
				\sum_{D\in\h}& \frac{\prod_{k=1}^KL\left(\tfrac{1}{2}+\alpha_k,\x_D\right)}{\prod_{m=1}^QL\left(\tfrac{1}{2}+\gamma_m,\x_D\right)}\\
				& =\sum_{D\in\h} \sum_{\varepsilon\in\{-1,1\}^K} |D|^{\frac{1}{2} \sum_{k=1}^K(\varepsilon_k-1)\alpha_k} \prod_{k=1}^{K} \xx\left(\tfrac{1}{2} +\tfrac{(1-\varepsilon_k)}{2}\alpha_k\right)\\
				&\ \ \ \times  Y_{D}(\varepsilon_1\alpha_1,\cdots,\varepsilon_K\alpha_K;\gamma) A_{D}(\varepsilon_1\alpha_1,\cdots,\varepsilon_K\alpha_K;\gamma)+o\left(|D|\right),
			\end{split}
		\end{equation*}
		where $\xx(s)=q^{-\frac{1}{2}+s}$ and $L(s,\chi_D)$ is the quadratic Dirichlet $L$-function associated to $D$, with $D$ being a monic and square-free polynomial in $\mathbb{F}_q[T]$,
		
		
		\begin{equation*}
			Y_D(\alpha;\gamma) = \frac{\underset{j\leqslant k\leqslant K}{\prod} \z(1+\alpha_j+\alpha_k) \underset{m<r \leqslant Q}{\prod} \z(1+\gamma_m+\gamma_r)} {\prod_{k=1}^K\prod_{m=1}^Q \z\left(1+\alpha_k+\gamma_r\right)},
		\end{equation*}
		and
		
		\begin{equation*}
			\begin{split}
				A_D(\alpha,\gamma) &= Y_D(\alpha;\gamma)^{-1}  \prod_{\substack{P \text{ monic}\\\text{irreducible}}}\left(1+\frac{1}{|P|}\right)^{-1} \\
				&\ \ \ \times  \left(\frac{1}{2} \frac{\prod_{q=1}^Q\left(1-\frac{1}{|P|^{\frac{1}{2}+\gamma_q}}\right)} {\prod_{k=1}^K\left(1-\frac{1}{|P|^{\frac{1}{2}+\alpha_k}}\right)}+ \frac{1}{2} \frac{\prod_{q=1}^Q\left(1+\frac{1}{|P|^{\frac{1}{2}+\gamma_q}}\right)} {\prod_{k=1}^K\left(1+\frac{1}{|P|^{\frac{1}{2}+\alpha_k}}\right)} +\frac{1}{|P|}\right),
			\end{split}
		\end{equation*}
		and $\zeta_{\mathbb{A}}(s)$ is the zeta-function associated to $\mathbb{F}_q[T]$.
	\end{conjecture}
	
	Using the Ratios Conjecture, Andrade and Keating computed the one-level density for this family of quadratic Dirichlet $L$-functions.
	
	 	\begin{theorem}\label{andrade&keating n=1}
	 	Assuming the ratios Conjecture \ref{Andrade&Keating Ratios Conjecture}, we have that
	 	
	 	\begin{equation*}
	 		\begin{split}
	 			S_1(f)&= \sum_{D\in\h} \sum_{\gamma_D} f(\gamma_D) =  \frac{1}{2\pi} \int_{-\pi/\ln q}^{\pi/\ln q} f(z) \sum_{D\in \h} \Bigg[ \log|D|+\frac{\xx'}{\xx}(\frac{1}{2}-iz) \\
	 			& \ \ \ + 2  \Bigg(  \frac{\z'}{\z}(1+2iz)+ A_D'(iz;iz) - \left(\ln q\right) |D|^{iz} \xx\left(\tfrac{1}{2}+iz\right) \z(1-2iz)  \\
	 			& \ \ \ \times A_D(-iz,iz)\Bigg) \Bigg]dz + o\left(|D|\right),
	 		\end{split}
	 	\end{equation*}
	 	where $\gamma_D$ is the ordinate of a generic zero of $L(s,\x_D)$ and $f$ is an even and periodic suitable test function,
	 	
	 	\begin{equation}\label{AD}
	 		\begin{split}
	 			A_D(\alpha;\gamma)&= \prod_{\substack{P \ \text{monic} \\ \text{irreducible}}} \left(1-\frac{1}{|P|^{1+\alpha+\gamma}}\right)^{-1} \\
				& \; \; \;\ \; \; \times  \left(1-\frac{1}{(|P|+1)|P|^{1+2\alpha}}-\frac{1}{(|P|+1)|P|^{\alpha+\gamma}}\right),\\
	 			A_D(r;r)&=1,\\
	 			A_D(-r;r)&= \prod_{\substack{P \ \text{monic} \\ \text{irreducible}}} \left(1-\frac{1}{|P|}\right)^{-1} \left(1-\frac{1}{(|P|+1)|P|^{1-2r}}-\frac{1}{|P|+1}\right),\\
		A_D'(r;r)&= \sum_{\substack{P \ \text{monic} \\ \text{irreducible}}}  \frac{\log|P|}{(|P|^{1+2r}-1)(|P|+1)},
	 		\end{split}
	 	\end{equation}
	 \end{theorem}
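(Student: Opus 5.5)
The plan is to follow the standard Conrey–Snaith route: write the one-level density as a contour integral of the logarithmic derivative of $L(s,\chi_D)$, then evaluate the average of that logarithmic derivative using Conjecture \ref{ratios} with $K=Q=1$.

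\textbf{Contour setup.} For $D\in\h$, $L(s,\chi_D)$ is a polynomial in $u=q^{-s}$ of degree $2g$, so its zeros $\tfrac12+i\gamma_D$ are periodic in $\gamma_D$ with period $2\pi/\ln q$. For $f$ even, periodic and suitably analytic in a strip, the argument principle gives
\begin{equation*}
\sum_{\gamma_D} f(\gamma_D)=\frac{1}{2\pi i}\left(\int_{(c)}-\int_{(1-c)}\right)\frac{L'}{L}(s,\chi_D)\, f\!\left(-i(s-\tfrac12)\right)ds ,
\end{equation*}
with $\tfrac12<c<1$. Each vertical integral is taken over one period, $\II s\in[-\pi/\ln q,\pi/\ln q]$; the horizontal sides cancel by periodicity.

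\textbf{Removing the left line.} On the line $\RR s=1-c$, I use the functional equation $L(s,\chi_D)=\xx(s)|D|^{1/2-s}L(1-s,\chi_D)$, where $|D|=q^{2g+1}$ and $\xx(s)=q^{-1/2+s}$. It gives
\begin{equation*}
\frac{L'}{L}(s,\chi_D)=\frac{\xx'}{\xx}(s)-\log|D|-\frac{L'}{L}(1-s,\chi_D).
\end{equation*}
Substituting $s\to1-s$ and using that $f$ is even, the left-line integral becomes the right-line integral plus the contribution of $\log|D|-\frac{\xx'}{\xx}$. Then I move both lines to $\RR s=\tfrac12$ (writing $s=\tfrac12+iz$) after averaging over $D$. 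The resulting expression is
\begin{equation*}
S_1(f)=\frac{1}{2\pi}\int_{-\pi/\ln q}^{\pi/\ln q}f(z)\sum_{D}\Big[\log|D|+\tfrac{\xx'}{\xx}(\tfrac12-iz)+2\,\tfrac{L'}{L}(\tfrac12+iz,\chi_D)\Big]dz,
\end{equation*}
where the last term is understood through its averaged, analytically continued expression. The sign conventions here need to be checked against $\xx'/\xx$.

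\textbf{Averaging the logarithmic derivative.} To average $\frac{L'}{L}(\tfrac12+r,\chi_D)$, I differentiate the $K=Q=1$ case of Conjecture \ref{ratios} with respect to $\alpha$ and then set $\alpha=\gamma=r$.
\begin{itemize}
\item The $\varepsilon=1$ term is $Y_D(\alpha;\gamma)A_D(\alpha;\gamma)$, with $Y_D=\z(1+2\alpha)/\z(1+\alpha+\gamma)$. Its $\alpha$-derivative at $\alpha=\gamma=r$ is $\frac{\z'}{\z}(1+2r)+A_D'(r;r)$, using $A_D(r;r)=1$.
\item The $\varepsilon=-1$ term is $|D|^{-\alpha}\xx(\tfrac12+\alpha)\z(1-2\alpha)A_D(-\alpha;\gamma)/\z(1-\alpha+\gamma)$. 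The factor $1/\z(1-\alpha+\gamma)$ vanishes at $\alpha=\gamma$, because $\z(s)=1/(1-q^{1-s})$ has a pole at $s=1$. So only the derivative of that factor survives. That derivative is $-\ln q$ times the remaining factors, evaluated at $\alpha=\gamma=r$, and it gives $-(\ln q)|D|^{-r}\xx(\tfrac12+r)\z(1-2r)A_D(-r;r)$.
\end{itemize}
Setting $r=iz$ and inserting into the contour formula gives the stated expression. A possible sign flip $|D|^{\pm iz}$ is harmless because $f$ is even; alternatively it can be tracked through the choice of line.

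The explicit Euler products in (\ref{AD}) come from simplifying the local factor of $A_D$ at $K=Q=1$. The bracket $\tfrac12\frac{1-x^{\gamma}}{1-x^{\alpha}}+\tfrac12\frac{1+x^{\gamma}}{1+x^{\alpha}}+\frac1{|P|}$, with $x^\alpha=|P|^{-1/2-\alpha}$, is multiplied by $(1+1/|P|)^{-1}$ and by the Euler factors of $Y_D^{-1}$. This reduces algebraically to the displayed formula. Logarithmic differentiation at $\alpha=\gamma=r$ then gives $A_D'(r;r)$.

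\textbf{Main obstacle.} I expect the main difficulty to be justifying the interchange of the conjectural average, which is valid only for $\RR\alpha,\RR\gamma>0$, with the move to the critical line. The point is that the combined $\varepsilon=\pm1$ expression is analytic at $\RR r=0$: the poles of $\z(1\pm2r)$ cancel between the two terms. I would also have to check that the $o(|D|)$ error survives differentiation. I would handle this in the usual way, taking the conjecture to hold uniformly with its derivative, via Cauchy's integral formula on small circles.
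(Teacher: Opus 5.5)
Your argument is correct in substance and follows the same route the paper relies on. The paper quotes this result from Andrade--Keating and, in Section~7, recovers it as the case $n=1$ of its $n$-level formula: the contour $\int_{C_+}-\int_{C_-}$, the functional equation on $C_-$, and then the $\D=\emptyset$ and $\D=\{\alpha\}$ terms of $J^*_{DL}$. These are exactly your $\varepsilon=1$ and $\varepsilon=-1$ terms: the factor $-\log q$ comes from differentiating $1/\z(1-\alpha+\gamma)$, and the square root reduces to $\z(1-2\alpha)$. Your reduction of the local factor to the displayed $A_D(\alpha;\gamma)$ is correct, as are your values $A_D(r;r)=1$ and $A_D'(r;r)$. (Take $\tfrac12<c<\tfrac34$, as the paper does, so that the shifts lie in the range where the conjecture is assumed. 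This costs nothing, since $L'/L$ is holomorphic for $\mathrm{Re}\,s>\tfrac12$.)

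One claim, however, is false and should be withdrawn: replacing $|D|^{-iz}$ by $|D|^{iz}$ is \emph{not} harmless. Evenness of $f$ only permits $z\mapsto -z$ in the whole integrand. That sends $|D|^{-iz}\mathcal{X}(\tfrac12+iz)\z(1-2iz)A_D(-iz;iz)$ to $|D|^{iz}\mathcal{X}(\tfrac12-iz)\z(1+2iz)A_D(iz;-iz)$, not to the printed term.

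Concretely, put $v=q^{2r}$. Then $|D|^{-r}\mathcal{X}(\tfrac12+r)=v^{-g}$ and $|D|^{r}\mathcal{X}(\tfrac12+r)=v^{g+1}$. Also $\z(1-2r)=1/(1-v)$ and $A_D(-r;r)=\z(2)/\z(2-2r)=(1-v/q)/(1-q^{-1})$. So the two versions of the bracket differ by $-(\log q)A_D(-r;r)\sum_{j=-g}^{g}v^{j}$, whose constant term is $-\log q$. For $f\equiv1$ this shifts $S_1(f)$ by $2$ for every $D$, i.e.\ by $\asymp|D|$ in total, which is not $o(|D|)$.

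What your computation actually yields, $|D|^{-iz}$, is the correct exponent; the paper's own Section~7 obtains it before its final display. Your reflection step likewise gives $\log|D|-\frac{\mathcal{X}'}{\mathcal{X}}(\tfrac12-iz)=2g\log q$. That is the right sign, because it reproduces the $2g$ zeros of $\mathcal{L}(u,\chi_D)$ per period. So the printed $|D|^{iz}$ and $+\frac{\mathcal{X}'}{\mathcal{X}}$ are misprints. State that you are proving the corrected formula, rather than arguing that the two versions coincide.
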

	
	 In a related study, Andrade et al. \cite{AMPT} examined the function field analogue of the Dirichlet $L$-functions previously studied by Hughes and Rudnick \cite{HR}. By employing a suitably periodic test function, they computed the one- and two-level statistics of zeros in the global regime. Their results, which rely only on mild decay conditions on the Fourier coefficients, established a more general framework for analyzing the statistical distribution of zeros of $L$-functions in the function field context.

Further contributions were made by Bui and Florea \cite{BF}, who examined the one-level density and pair correlation of zeros for the same family of quadratic Dirichlet $L$-functions in function fields. They demonstrated that when the Fourier transform of the test function is supported in the restricted interval $\left(-1/3,1/3\right)$, a secondary term of order $q^{-4g/3}$ appears, which is a term absent in the predictions of Andrade and Keating Ratios Conjecture. Moreover, they showed that with further restrictions on the support of the test function, additional lower-order terms could be identified.
	
	 For $L$-functions associated with the prime character $\x_P,$ where $P$ is a monic irreducible polynomial of degree $2g + 1$ in $\mathbb{F}_q[T]$, Andrade, Jung, and Shamesaldeen \cite{Andrade Jung Shames} extended the function field analogy used in \cite{a&kConInMo} and formulated conjectures for moments and ratios of this family of $L$-functions.
	
	 \begin{conjecture}[Andrade, Jung and Shamesaldeen]\label{rationconjecture}
	 	Suppose that the real parts of $\alpha_k$ and $\gamma_m$ are positive and that $q\equiv 1 (\mod 4)$ is the fixed cordiality of the finite field $\mathbb{F}_q.$ Then we have,
	 	
	 	\begin{equation}
	 		\begin{split}
	 			\sum_{P\in\mathbb{P}_{2g+1}}& \frac{\prod_{k=1}^KL(\frac{1}{2}+\alpha_k,\x_P)}{\prod_{m=1}^QL(\frac{1}{2}+\gamma_m,\x_P)}\\
	 			& =\sum_{P\in\mathbb{P}_{2g+1}} \sum_{\varepsilon\in\{-1,1\}^K} |P|^{\frac{1}{2}\sum_{k=1}^K (\varepsilon_k-1)\alpha_k)} \prod_{k=1}^K \xx\left(\tfrac{1}{2}+\tfrac{\alpha_k-\varepsilon_k\alpha_k}{2}\right)\\
	 			& \ \ \	\times  Y_{\mathfrak{P}}(\varepsilon_1\alpha_1,\cdots,\varepsilon_K\alpha_K;\gamma)A_{\mathfrak{P}}(\varepsilon_1\alpha_1,\cdots,\varepsilon_K\alpha_K;\gamma)+o\left(|P|\right).\\
	 		\end{split}
	 	\end{equation}
	 	where
	 	
	 	\begin{equation*}
	 		Y_{\mathfrak{P}}(\alpha;\gamma):= \frac{\prod_{ j\leqslant k\leqslant K} \z(1+\alpha_j+\alpha_k)\prod_{ q<r\leqslant Q} \z(1+\gamma_r+\gamma_q)}{\prod_{k=1}^k\prod_{q=1}^Q \z(1+\alpha_k+\gamma_q)}.
	 	\end{equation*}
	 	and
	 	\begin{equation*}
	 		\begin{split}
	 			A_{\mathfrak{P}}(\alpha;\gamma)&= Y_{\mathfrak{P}}(\alpha;\gamma)^{-1}\\
	 			&\ \ \ \times \prod_{\substack{P \text{ monic}\\ \text{irreducible}}}  \frac{1}{2} \left(\frac{\prod_{q=1}^Q\left(1-\frac{1}{|P|^{\frac{1}{2}+\gamma_q}}\right)} {\prod_{k=1}^K\left(1-\frac{1}{|P|^{\frac{1}{2}+\alpha_k}}\right)}+\frac{\prod_{q=1}^Q\left(1+\frac{1}{|P|^{\frac{1}{2}+\gamma_q}}\right)} {\prod_{k=1}^K\left(1+\frac{1}{|P|^{\frac{1}{2}+\alpha_k}}\right)}\right).
	 		\end{split}
	 	\end{equation*}
	 \end{conjecture}
	
	 By using this conjecture, the authors computed the one-level density for this corresponding family of $L$-functions.
	 		
	 \begin{theorem}
	 	Assuming the ratios Conjecture \ref{rationconjecture}, we have that
	 	
	 	\begin{equation}\label{a.j.sh.1-level}
	 		\begin{split}
	 			S_1(f)=& \sum_{P\in\p} \sum_{\gamma_P} f(\gamma_P)\\
	 			=&  \frac{1}{2\pi} \int_{-\pi/\log q}^{\pi/\log q} f(t) \sum_{P\in \p} \Bigg( \log|P|+\frac{\xx'(\frac{1}{2}-it)}{\xx(\frac{1}{2}-it)} \\
	 			& + 2  \Bigg(  \frac{\z'(1+2it)}{\z(1+2it)}- \left(\log q\right) |P|^{-it} \xx\left(\tfrac{1}{2}+r\right) \z(1-2it)\Bigg) \Bigg)dt \\
	 			& + o\left(|P|\right),
	 		\end{split}
	 	\end{equation}
	 	where $\gamma_P$ is the ordinate of a generic zero of $L(s,\x_P)$ and $f$ is an even and periodic suitable test function.
	 \end{theorem}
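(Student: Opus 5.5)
The plan follows the standard Conrey--Snaith route from the ratios conjecture to the one-level density, as Andrade and Keating did for Theorem~\ref{andrade&keating n=1}. Since $L(s,\x_P)$ is a polynomial in $u=q^{-s}$ of degree $2g$, its zeros $\tfrac12+i\gamma_P$ have ordinates defined modulo $2\pi/\log q$. I will therefore count zeros in one period using a periodic, even test function $f$. By the argument principle (Cauchy's theorem) on the rectangle with vertical sides $\Re s = \tfrac12 \pm c$, for a small $c>0$, and horizontal sides at heights $\pm\pi/\log q$, I get
\[
\sum_{\gamma_P} f(\gamma_P)=\frac{1}{2\pi i}\left(\int_{(c)}-\int_{(-c)}\right) f\bigl(-i r\bigr)\frac{L'}{L}\bigl(\tfrac12+r,\x_P\bigr)\,dr .
\]
The horizontal contributions cancel by periodicity. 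On the line $\Re r=-c$ I use the functional equation $L(s,\x_P)=\xx(s)\,|P|^{1/2-s}L(1-s,\x_P)$. This gives
\[
\frac{L'}{L}\bigl(\tfrac12-r,\x_P\bigr)=\frac{\xx'}{\xx}\bigl(\tfrac12-r\bigr)+\log|P|-\frac{L'}{L}\bigl(\tfrac12+r,\x_P\bigr).
\]
Using the evenness of $f$, both remaining integrals become integrals of $\frac{L'}{L}(\tfrac12+r,\x_P)$ on $\Re r=c$, which produces the factor $2$. The terms $\log|P|+\frac{\xx'}{\xx}(\tfrac12-it)$ come out directly once the contour is moved to $\Re r=0$, where $r=it$.

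Next I need the averaged logarithmic derivative $\sum_{P\in\p}\frac{L'}{L}(\tfrac12+r,\x_P)$. I obtain it by differentiating Conjecture~\ref{rationconjecture} with $K=Q=1$ with respect to $\alpha$ and then setting $\alpha=\gamma=r$. With $K=Q=1$ the conjecture reads
\[
\sum_{P}\frac{L(\tfrac12+\alpha,\x_P)}{L(\tfrac12+\gamma,\x_P)}=\sum_P\Bigl[\frac{\z(1+2\alpha)}{\z(1+\alpha+\gamma)}A_{\mathfrak P}(\alpha;\gamma)+|P|^{-\alpha}\xx\bigl(\tfrac12+\alpha\bigr)\frac{\z(1-2\alpha)}{\z(1-\alpha+\gamma)}A_{\mathfrak P}(-\alpha;\gamma)\Bigr].
\]
Differentiating in $\alpha$ at $\alpha=\gamma=r$ gives three contributions:
\begin{itemize}
\item from the first term, $\frac{\z'}{\z}(1+2r)+\partial_\alpha A_{\mathfrak P}(r;r)$;
\item from the second term, since $\z(1-\alpha+\gamma)$ has a pole at $\alpha=\gamma$, the derivative picks out the residue, namely $-(\log q)\,|P|^{-r}\xx(\tfrac12+r)\z(1-2r)A_{\mathfrak P}(-r;r)$.
\end{itemize}
The residue constant comes from $\z(s)=(1-q^{1-s})^{-1}$, which gives $1/\z(1+w)\sim w\log q$.

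The main step is evaluating the arithmetic factor explicitly. Unlike the family $\h$, the Euler product here lacks the $1/|P|$ correction and the $(1+1/|P|)^{-1}$ normalization. I expect to show two things:
\begin{itemize}
\item $A_{\mathfrak P}(-r;r)=1$. At $\alpha=-r,\gamma=r$, each local factor is $\tfrac12\bigl(\frac{1-x}{1-y}+\frac{1+x}{1+y}\bigr)$ with $x=|Q|^{-1/2-r}$ and $y=|Q|^{-1/2+r}$. Multiplied by $Y^{-1}$, namely $\z(1)/\z(1-2r)$ in local form, this should collapse to $1$. I will verify this by direct local computation; it is essentially the statement that $\chi_P(Q)^2=1$ on average with no correction.
\item $\partial_\alpha A_{\mathfrak P}(r;r)=0$. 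Writing the local factor as $(1-u^{2})(1-uv)^{-1}\cdot\tfrac12\bigl(\frac{1-v}{1-u}+\frac{1+v}{1+u}\bigr)$ with $u=|Q|^{-1/2-\alpha}$ and $v=|Q|^{-1/2-\gamma}$, the bracket simplifies to $(1-uv)/(1-u^2)$. So the local factor is identically $1$ for all $\alpha,\gamma$, which gives both claims at once.
\end{itemize}
I expect this bookkeeping, checking that $A_{\mathfrak P}\equiv1$ and getting the signs of the exponents right, to be the main obstacle. I would check it first, because it explains why \eqref{a.j.sh.1-level} has no $A$-terms.

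Finally, I substitute the averaged logarithmic derivative into the contour integral. The integrand is holomorphic near $\Re r=0$: the apparent poles of $\z(1+2r)$ and $\z(1-2r)$ at $r=0$ cancel between the two terms. So I can shift the contour to $r=it$ with $t\in[-\pi/\log q,\pi/\log q]$, and the prefactor $\frac{1}{2\pi i}\,dr$ becomes $\frac{1}{2\pi}\,dt$. Collecting terms gives \eqref{a.j.sh.1-level}, with the $o(|P|)$ error inherited from the conjecture. This assumes the error is uniform in $r$ on the compact contour, which I take as part of the conjecture's hypotheses.
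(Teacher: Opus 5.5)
Your approach matches the paper's. The paper obtains this formula as the $n=1$ case of its prime-character $n$-level density, by integrating $J^*_{DL}(iz,\emptyset)+J^*_{DL}(-iz,\emptyset)-J^*_{DL}(\emptyset,-iz)$. At $n=1$ the sum over $\D\subseteq\{\alpha\}$ is exactly your two-term differentiation: $\D=\emptyset$ gives $\frac{\z'}{\z}(1+2r)$, $\D=\{\alpha\}$ gives $-(\log q)|P|^{-r}\xx(\tfrac12+r)\z(1-2r)$, and $J^*_{DL}(\emptyset,\cdot)$ is the functional-equation constant. Your local computation showing $A_{\mathfrak P}(\alpha;\gamma)\equiv1$ for $K=Q=1$ is correct, and it explains why no $A$-terms appear; the paper leaves this implicit. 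The pole cancellation at $r=0$ is also right. The same cancellation holds at $r=\pm i\pi/\log q$, since $q^{-2gr}=1$ there.

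There is, however, a sign error in your functional-equation step, and as written it gives the wrong main term. The functional equation $L(s,\x_P)=\xx(s)|P|^{1/2-s}L(1-s,\x_P)$ is right, but its logarithmic derivative is
\[
\frac{L'}{L}\bigl(\tfrac12-r,\x_P\bigr)=\frac{\xx'}{\xx}\bigl(\tfrac12-r\bigr)-\log|P|-\frac{L'}{L}\bigl(\tfrac12+r,\x_P\bigr),
\]
with $-\log|P|$, not $+\log|P|$. As a check, $\frac{L'}{L}(s)+\frac{L'}{L}(1-s)=-2g\log q$ follows from writing $\mathcal{L}(u,\x_P)=\prod_{j\le 2g}(1-\alpha_j u)$ with $\{\alpha_j\}$ stable under $\alpha\mapsto q/\alpha$.

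With your sign, subtracting the $\Re r=-c$ integral leaves the constant $-\log|P|-\frac{\xx'}{\xx}=-(2g+2)\log q$. Taking $f\equiv1$ would then give a negative number of zeros. So the claim that $\log|P|+\frac{\xx'}{\xx}(\tfrac12-it)$ ``comes out directly'' does not follow from your display.

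With the correct sign the constant is $\log|P|-\frac{\xx'}{\xx}(\tfrac12-it)=\log|P|-\log q=2g\log q$. This integrates to exactly $2g$ zeros per period, as it must. Consequently, the printed term $+\frac{\xx'(\frac12-it)}{\xx(\frac12-it)}$ in the statement only makes sense if read as $\frac{d}{dr}\log\xx(\tfrac12-r)=-\log q$. The paper itself writes $\log|P|-\frac{\xx'}{\xx}(\tfrac12-iz)$ in its own rederivation. You should make this reading explicit rather than match the printed expression. Once the sign is fixed, the rest of your argument goes through and yields the stated formula, with $\xx(\tfrac12+r)$ read as $\xx(\tfrac12+it)$.
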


The primary goal of this paper is to advance the study of $n$-level densities of zeros of $L$-functions in the function field setting and to generalize some of the results discussed above. In particular, we develop the function field analogue of the methodology introduced by Mason and Snaith \cite{mason&Snaith}, focusing on the family of quadratic Dirichlet $L$-functions associated with hyperelliptic curves of genus $g$ over a fixed finite field $\mathbb{F}_q$. Adapting their approach, we apply the Ratios Conjecture \ref{ratios} to compute the $n$-level density for this family, providing new insights into the statistical behavior of their low-lying zeros. 
	
	
	\section{Preliminaries on Dirichlet $L$-functions in Function Fields}\label{background}
	
	This section will give some basic facts and notations about $L$-functions in function fields. Let $q$ be fixed and $q \equiv 1(\mod 4)$, $\mathbb{F}_q$ a finite field with $q$ elements, and $\A:= \f$ be the polynomial ring over $\mathbb{F}_q$. The norm of a polynomial $f\in\f$ is defined as $|f| = q^{\deg(f)}$ for $f\neq 0$ and $0$ otherwise. The letter $P$ will be used to denote a monic and irreducible polynomial. 
	
	The zeta function of $\A$, is denoted by $\z(s)$  and defined by the infinite series
	\begin{equation*}
		\z(s):=\sum_{f  \text{ monic}} \frac{1}{|f|^s} = \prod_{\substack{P \text{ monic} \\\text{irreducible}}} \left(1-\frac{1}{|P|^s}\right)^{-1}.
	\end{equation*}
	Since there are $q^k$ different monic polynomials of degree $k$, it is easy to show that
	
	\begin{equation*}
		\z(s)=\frac{1}{1-q^{1-s}}.
	\end{equation*}
	The analogue of the M\"{o}bius function, $\mu(f)$,  for $\A$ is defined as follows
	
	\begin{equation*}
		\mu(f) = \begin{cases}
			(-1)^t & \text{ if } f=\alpha P_1 P_2 \cdots P_t \\
			0 & \text{ Otherwise }
		\end{cases}
	\end{equation*}
	where each $P_j$ is a distinct monic irreducible polynomial. 
	
	For a monic irreducible polynomial $P\in\A$, the quadratic residue symbol $\left(\frac{f}{P}\right)$, with $f$ coprime to $P$, is defined by
	\begin{equation*}
		\left(\frac{f}{P}\right)\equiv f^{\frac{|P|-1}{2}} \left(\mod P\right).
	\end{equation*}
	
	We define the Jacobi symbol $\left(\frac{f}{Q}\right)$ for arbitrary monic polynomial $Q$, where $f$ is coprime to $f$ and $Q= P^{e_1}_1 \cdots P^{e_k}_k$, by
	\begin{equation*}
		\left(\frac{f}{Q}\right) = \prod_{i=1}^k \left(\frac{f}{P_i}\right)^{e_i}.
	\end{equation*}
	
	Let $D\in\f$ be square-free. We define the quadratic character $\x_D$ by
	
	\begin{equation*}
		\x_D(f)=\left(\frac{D}{f}\right).
	\end{equation*}
	That is, if $P\in\f$ is a monic and irreducible polynomial we have
	\begin{equation*}
		\x_D(P)=\begin{cases}
			0, & \text{ if } P|D,\\
			1, & \text{ if } P\nmid D \text{ and } D \text{ square modulo } P,\\
			-1, & \text{ if } P\nmid D \text{ and } D \text{ non-square modulo } P.
		\end{cases}
	\end{equation*} 
	
	We define the $L$-function corresponding to the quadratic character $\x_D$ by
	
	\begin{equation*}
		L(s,\x_D) = \sum_{f \text{ monic}} \frac{\x_D(f)}{|f|^s} = \prod_{\substack{P \text{ monic} \\ \text{ irreducible}}} \left(1-\frac{\x_D(P)}{|P|^s}\right)^{-1}, \text{ for } \RR(s)>1.
	\end{equation*}
	With the change of variable $u=q^{-s}$ we have
	
	\begin{equation*}
		\begin{split}
			\mathcal{L}(u,\x_D) :&= L(s,\x_D) = \sum_{f \text{ monic}} \x_D(f)u^{\deg(f)}\\
			& =\prod_{\substack{P \text{ monic} \\ \text{ irreducible}}} \left(1-\x_D(P)u^{\deg(P)}\right)^{-1}.
		\end{split}
	\end{equation*}
	
	Let $\mathbb{H}_d$ be the set of all square-free monic polynomials of degree $d$ in $\f$.  The cardinality of $\mathbb{H}_d$ is
	\begin{equation*}
		\#\mathbb{H}_d=\begin{cases}
			\left(1-\frac{1}{q}\right)q^d, & d\geq 2, \\
			q, & d=1.
		\end{cases}
	\end{equation*}
	Therefore, when $D\in\h$ and $g\ge 1$ we have
	
	\begin{equation*}
		\h= \left(q-1\right)q^{2g} = \frac{|D|}{\z(2)}.
	\end{equation*}
	
	Finally, if we let $D\in\h$ we have the ``approximate" functional equation, first presented in \cite[Lemma 3.3]{a&kmeanvalue}, for the $L$-function given by
	
	\begin{equation}\label{aprox}
		L(s,\x_D) = \sum_{\substack{f_1 \text{ monic} \\ \deg(f_1)\le g}} \frac{\x_D(f_1)}{|f_1|^s} +\xx_D(s) \sum_{\substack{f_2 \text{ monic} \\ \deg(f_2)\le g-1}} \frac{\x_D(f_1)}{|f_2|^{1-s}},
	\end{equation}
	where $\xx_D(s) =q^{g(1-2s)}$. See \cite{a&kmeanvalue} and \cite{Rosen} for more details and background information on $L$-functions in function fields.

	\section{The Statement of the Main Results}\label{Results}
	
	The main aim of this paper is to write down a conjecture for the $n$-level density for the family of quadratic Dirichlet $L$-functions attached to square-free monic polynomials in $\mathbb{F}_q[T]$. In other words, our main aim is to adapt the work of Mason and Snaith \cite{mason&Snaith} to the function field setting. In this section, we give a brief summary of the paper and of what you may expect in the sections ahead.
	
	In section \ref{The Ratios Conjecture}, we re-write Andrade and Keating's ratios conjecture \ref{Andrade&Keating Ratios Conjecture} in a set form by defining two sets of shifts $A=\{\alpha_1,\cdots,\alpha_K\}$ and $B=\{\gamma_1\cdots,\gamma_Q\}$ of complex numbers such that
	
	\begin{equation}\label{comditions}
		\begin{array}{cc}
			-\frac{1}{4}<\RR(\alpha_k)<\frac{1}{4}, \ \text{where $1\leq k\leq K$},   \\
			\frac{1}{\log |P|}\ll \RR(\gamma_m)<\frac{1}{4} \ \text{where $1\leq m\leq Q$},  \\
			\forall \varepsilon>0, \ \II (\alpha_k),\II(\gamma_m) \ll_\varepsilon |D|^{1-\varepsilon}.   \\
		\end{array}
	\end{equation}
	We also define the following products of zeta functions
	
	\begin{equation}\label{Zz}
		\begin{split}
			Z_{\z}(A,B)&= \prod_{\substack{\alpha\in A \\ \beta\in B}} \z(1+\alpha+\beta),\\
			Y_{\z}(A) &= \prod_{\substack{\alpha\in A}} \z(1+2\alpha),\\
		\end{split}
	\end{equation}
	and the Euler factors
	\begin{equation}\label{V_+} 
		\begin{split}
			V_{+}(A) & = \prod_{\alpha\in A} \left(1+\frac{1}{|P|^{\frac{1}{2}+\alpha}}\right),\\
			V_{-}(A) &= \prod_{\alpha\in A} \left(1-\frac{1}{|P|^{\frac{1}{2}+\alpha}}\right).\\	
		\end{split}
	\end{equation}
	Let $\D\subseteq A$ and $\D^{-} = \{-\delta : \delta \in \D\}$, we define
	
	\begin{equation}\label{A_{DL}}
		\begin{split}
			A_{DL}&(A,B,\D) = Y_{DL}(A,B,\D)^{-1} \\
			&  \times \prod_{\substack{P \text{ monic} \\ \text{irreducible}}} \left(1+\frac{1}{|P|}\right)^{-1}  \left(\frac{V_{-}(B)}{2V_{-}((A\setminus \D) \cup \D^{-})} +  \frac{V_{+}(B)}{2V_{+}((A\setminus \D) \cup \D^{-})}+\frac{1}{|P|}\right),
		\end{split}
	\end{equation}
	where
	
	\begin{equation}\label{Y_{DL}}
		\begin{split}
			Y_{DL}&(A,B,\D) \\
			&= \sqrt{\frac{Z_{\z}((A\setminus \D) \cup \D^{-},(A\setminus \D) \cup \D^{-})  Y_{\z}((A\setminus \D) \cup \D^{-}) Z_{\z}(B,B)}{Z_{\z}((A\setminus \D) \cup \D^{-},B)^2 Y_{\z}(B)}}.
		\end{split}
	\end{equation}
	Using the notation above in Andrade and Keating's ratios conjecture \ref{Andrade&Keating Ratios Conjecture} we are then able to re-write the ratios conjecture as the following.
	
	\begin{conjecture}\label{Our Ratios Conjecture2.}
		Let $A=\{\alpha_1,\cdots,\alpha_K\}$ and $B=\{\gamma_1\cdots,\gamma_Q\}$ such that $\alpha_k$ is such that, $1\leqslant k \leqslant K$, and $\gamma_m$ is such that, $1\leqslant m \leqslant Q$ satisfying the restrictions in (\ref{comditions}).
		Then
		
		\begin{equation}\label{df}
			\begin{split}
				R_{DL}(A,B)&=\sum_{D\in\h}  \frac{\prod_{k=1}^K L\left(\tfrac{1}{2}+\alpha_k,\x_D\right)}{\prod_{m=1}^Q L\left(\tfrac{1}{2}+\gamma_m,\x_D\right)}\\
				& = \sum_{D\in\h} \sum_{\D\subseteq A} \left|D\right|^{- \sum_{\delta\in \D}\delta } \prod_{\delta\in \D} \xx\left(\tfrac{1}{2}+\delta\right) Y_{DL}(A,B,\D) A_{DL}(A,B,\D)\\
				& \ \ \ + o\left(|D|\right),
			\end{split}
		\end{equation}
		where $Y_{DL}(A,B,\D),A_{DL}(A,B,\D)$ are defined as in equations (\ref{Y_{DL}}) and (\ref{A_{DL}}) respectively.
	\end{conjecture}
	
	In Section \ref{DifRatios}, we differentiate the ratios conjecture \ref{Our Ratios Conjecture2.} and obtain the following theorem.

	\begin{theorem}\label{J^*_{DL}.}
		Assume Conjecture \ref{Our Ratios Conjecture2.}. Let $A$ be a finite set of complex numbers such that for $\alpha\in A, \  \frac{1}{\log |D|}\ll \RR(\alpha)<\frac{1}{4}$ and $\forall \varepsilon>0, \ \Im(\alpha) \ll_\varepsilon |D|^{1-\varepsilon}.$ Then
		
		\begin{equation*}
			J_{DL}(A)= J^*_{DL}(A)+o(|D|)
		\end{equation*}
		where
		
		\begin{equation*}
			\begin{split}
				J_{DL}(A) 
				&= \prod_{\alpha\in A} \frac{\partial}{\partial\alpha} R_{DL}(A,B)\Big\vert_{B=A},
			\end{split}
		\end{equation*}
		and
		\begin{equation}\label{J_{DL(A)}}
			\begin{split}
				J^*_{DL}(A) &= \sum_{D\in\h} \sum_{\D\subseteq A} \left|D\right|^{- \sum_{\delta\in \D}\delta } \prod_{\delta\in \D} \xx\left(\tfrac{1}{2}+\delta\right)   \left( -\log q\right)^{\#\D}\\
				&\ \ \ \times \sqrt{\frac{	 Z_{\z}(\D^{-},\D^{-}) Z_{\z}(\D,\D) Y_{\z}(\D^{-})  }{  Z_{\z}^\dag( \D^{-},\D)^2  Y_{\z}(\D)}} \\
			\end{split}
		\end{equation}	
		\begin{equation*}
			\begin{split}
				\color{white}J^*_{DL}(A)
				& \ \ \ \times A_{DL}(\D,\D,\D) \sum_{\substack{A\setminus \D = W_1+\cdots+W_R\\ |W_r|\leqslant 2}} \prod_{r=1}^R \widetilde{H}^{A,B}_\D(W_r)
			\end{split}
		\end{equation*}	
		with $Z_{\z}(A,B), Y_{\z}(A),A_{DL}(A,B,\D)$ and $\widetilde{H}^{A,B}_\D(W_r)$ defined as in equations (\ref{Zz}), (\ref{A_{DL}}) and (\ref{widetilde{H}^{A,B}_D}) respectively.
	\end{theorem}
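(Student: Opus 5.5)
We follow the strategy of Mason and Snaith for the number field case. We differentiate the right-hand side of Conjecture \ref{Our Ratios Conjecture2.} term by term in each $\alpha\in A$, and then set $B=A$. Under the hypotheses on $A$, the choice $B=A$ satisfies the restrictions (\ref{comditions}). The $o(|D|)$ error is assumed to survive differentiation, as is standard: the conjecture is taken to hold uniformly in a small neighbourhood of the shifts, so Cauchy's estimate for derivatives applies. Hence $J_{DL}(A)$ equals the sum over $D\in\h$ and $\D\subseteq A$ of $\prod_{\alpha\in A}\partial_\alpha$ applied to
\[
|D|^{-\sum_{\delta\in\D}\delta}\prod_{\delta\in\D}\xx(\tfrac12+\delta)\,Y_{DL}(A,B,\D)A_{DL}(A,B,\D),
\]
evaluated at $B=A$, plus $o(|D|)$.

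Fix $\D$ and write $B=A$ as $\D'\cup(A\setminus\D)'$, where the primes mark the copies of the shifts inside $B$. The key observation concerns the factor $Z_{\z}((A\setminus\D)\cup\D^-,B)^{-2}$. For each $\alpha\in A\setminus\D$ it contains $\z(1+\alpha+\beta)^{-1}$ with $\beta$ the copy of $-\alpha$'s partner. More precisely, for $\alpha\in A\setminus \D$ and $\beta=\gamma$ its counterpart, one gets a factor $\z(1+\alpha-\gamma)$-type term that, after setting $B=A$ (with the sign conventions of the ratio, the denominator shifts enter as $-\gamma$), vanishes to first order. Since $\z(s)=(1-q^{1-s})^{-1}$ has a simple pole at $s=1$, the factor $1/\z(1+\alpha-\gamma)=1-q^{-(\alpha-\gamma)}$ vanishes at $\alpha=\gamma$ with derivative $\log q$.

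For $\delta\in\D$, the corresponding factor is $1/\z(1-\delta-\gamma)$ with $\gamma=\delta$, which does not vanish. Its contribution, together with the $\D$-dependent parts of $Y_{DL}$ restricted to $\D$, assembles into the displayed square root with $Z^\dag_{\z}(\D^-,\D)$. Here the dagger denotes the product with the singular diagonal factors removed, and the factor $(-\log q)^{\#\D}$ comes from differentiating the diagonal factors $1/\z$ attached to $\delta\in\D$. This is the function-field replacement for the factor $(-1)^{\#\D}$ in Mason and Snaith.

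The general structure then follows by Leibniz's rule. If some $\alpha\in A\setminus\D$ has its derivative fall elsewhere, its vanishing factor survives and the term is zero at $B=A$. So each $\partial_\alpha$ with $\alpha\in A\setminus\D$ must hit its own vanishing factor exactly once, producing $\log q$. Alternatively, the leftover derivatives act on logarithmic derivatives of the remaining factors. Organizing the surviving terms gives a sum over set partitions $A\setminus\D=W_1+\cdots+W_R$. A block with $|W_r|=1$ gives a single logarithmic derivative (of $\z(1+2\alpha)$, of cross terms with $\D$, and of $A_{DL}$). A block with $|W_r|=2$ gives mixed second derivatives such as $(\z'/\z)'(1+\alpha_i+\alpha_j)$ and $\partial^2\log A_{DL}$ terms. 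Blocks of size $\ge3$ vanish because each such log-derivative term depends on at most two shifts. This defines $\widetilde H^{A,B}_\D(W_r)$ in (\ref{widetilde{H}^{A,B}_D}). Finally, at $B=A$ the Euler product $A_{DL}(A,A,\D)$ collapses to $A_{DL}(\D,\D,\D)$, because the $V_\pm$ factors for $A\setminus\D$ cancel between numerator and denominator.

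The main obstacle is the combinatorial bookkeeping in the Leibniz expansion. One must verify that only blocks of size at most two survive, and that the zeta factors split exactly into the square-root expression with $Z^\dag$. We will first handle $A_{DL}$ by writing it as $\exp$ of a sum of logarithms, whose mixed partials in three or more distinct variables vanish at $B=A$. We will then use the product rule for the exponential of a sum, which naturally yields the set-partition sum.
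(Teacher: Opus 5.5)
Your overall plan is the paper's: differentiate the conjecture termwise, set $B=A$, use logarithmic differentiation (Lemma \ref{H}) on what remains, and collapse $A_{DL}(A,A,\D)$ to $A_{DL}(\D,\D,\D)$. However, there is a genuine gap at the key step, because you have the roles of $\D$ and $A\setminus\D$ reversed.

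\textbf{Which factors vanish.} In this conjecture the denominator shifts enter with a plus sign (compare $\prod_{k,m}\z(1+\alpha_k+\gamma_m)$ in $Y_D$). So the cross factor in $Y_{DL}(A,B,\D)$ is $Z_{\z}((A\setminus\D)\cup\D^-,B)^{-1}$, a product of terms $\z(1+\alpha+\beta)^{-1}$ with $\alpha\in(A\setminus\D)\cup\D^-$ and $\beta\in B$.
\begin{itemize}
\item For $\alpha\in A\setminus\D$ paired with its own copy in $B$, the term is $\z(1+2\alpha)^{-1}$. This is nonzero, and the whole $(A\setminus\D)$-part of the square root equals $1$ at $B=A$.
\item The factors that vanish are those attached to $\delta\in\D$: $\z(1-\delta+\beta_\delta)^{-1}=1-q^{\delta-\beta_\delta}$. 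This is $0$ at $\beta_\delta=\delta$, and among the differentiation variables it involves only $\delta$.
\end{itemize}
Hence every $\partial_\delta$ with $\delta\in\D$ is forced onto its own factor, giving $-q^{\delta-\beta_\delta}\log q\to-\log q$. This is the source of $(-\log q)^{\#\D}$ and of $Z^\dagger_{\z}(\D^-,\D)$. It is also why the prefactor $|D|^{-\sum_{\delta\in\D}\delta}\prod_{\delta\in\D}\xx(\tfrac12+\delta)$ appears undifferentiated.

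\textbf{Where your sketch goes wrong.} You assert that the $\D$-factor does not vanish, yet still credit $(-\log q)^{\#\D}$ to differentiating it. You then force each $\partial_\alpha$ with $\alpha\in A\setminus\D$ onto a vanishing factor. Carried out literally, this yields $(\log q)^{\#(A\setminus\D)}$ and uses up every derivative in $A\setminus\D$, so the set-partition sum over $A\setminus\D$ could never arise.

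In the paper the $A\setminus\D$ derivatives are exactly the free ones. After removing the $\D$-factors, what remains is $e^{H}$ with $H=\log H^{A,B}_\D$, which is analytic and nonzero at $B=A$. Lemma \ref{H} then turns $\prod_{\alpha\in A\setminus\D}\partial_\alpha e^{H}$ into $e^{H}\sum_{A\setminus\D=W_1+\cdots+W_R}\prod_r\widetilde H^{A,B}_\D(W_r)$, and $e^{H}$ at $B=A$ equals $A_{DL}(\D,\D,\D)$.

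\textbf{Blocks of size three or more.} Your justification for dropping blocks with $|W_r|\ge 3$ also fails as stated.
\begin{itemize}
\item It is true for the zeta part $H^{A,B}_\D(W_r)$, which is a sum of terms each involving at most two shifts.
\item It is false for $\log A_{DL}$. The local factor $\log\bigl(\tfrac12V_-(B)/V_-(A)+\tfrac12V_+(B)/V_+(A)+|P|^{-1}\bigr)$ does not split into one- and two-variable pieces.
\item Already for $\D=\emptyset$, its third mixed derivative in $x_i=|P|^{-1/2-\alpha_i}$ at $B=A$ is $\frac{|P|^{-1}}{(1+|P|^{-1})^2}(x_1+x_2+x_3)+O(|P|^{-1}|x|^3)$, which is nonzero.
\end{itemize}
The paper's proof claims vanishing for $|W_r|\ge3$ only for $H^{A,B}_\D$, and keeps $A^{A,B}_\D(W_r)$ inside $\widetilde H^{A,B}_\D$. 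Lemma \ref{H} itself produces a sum over all set partitions. So the restriction $|W_r|\le2$ cannot be reached by the vanishing argument you propose.
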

	
	Writing the ratios conjecture \ref{Andrade&Keating Ratios Conjecture} in a set form it helps with the differentiation as we can move the differentiation inside the sum over subsets $\D \subset A$, and then proceed by separating the differentiations in cases either $\alpha\in \D$ or $\alpha\notin \D$. Writing the conjecture using the sets $A$ and $B$ also helps when dealing with poles of $J^*_{DL}(A)$ and the residue calculation at these poles as can be seen in section \ref{Residue Theorem}. In the end, we obtain the following theorem.

	\begin{theorem}\label{Residue.}
		Let $A$ be a finite set of complex numbers, let $\alpha^*,\beta^*\in A$ and $A'=A\backslash\{\alpha^*,\beta^*\}$ and let $J^*_{DL}(A)$ as defined in Theorem \ref{J^*_{DL}.}. Then
		
		\begin{equation*}
			\begin{split}
				\underset{\alpha^*=-\beta^*}{\mathrm{Res}}&(J^*_{DL}(A)) \\
				& =J^*_{DL}(A'\cup \{\beta^*\}) + J^*_{DL}(A'\cup \{-\beta^*\}) -\frac{\xx'_{D}}{\xx_{D}} \left(1+\beta^*\right) J^*_{DL}(A')
			\end{split}
		\end{equation*}
		where $\frac{\xx'_{D}}{\xx_{D}}(s)=-\log|D|+\log q$ comes from the functional equation. We also have
		
		\begin{equation*}
			\underset{\alpha^*=0}{\mathrm{Res}}(J^*_{DL}(A))=0.
		\end{equation*}
	\end{theorem}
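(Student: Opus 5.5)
The proof will follow Mason and Snaith's residue computation for the symplectic family, transposed to $\z(s)=(1-q^{1-s})^{-1}$. We first locate the poles of $J^*_{DL}(A)$ in the variable $\alpha^*$ with $\beta^*$ and $A'$ fixed. Since $\z(1+x)$ has a simple pole at $x=0$ with residue $1/\log q$ in $x$, the only singular factors are zeta factors with argument $1+\alpha^*+\beta^*$ (up to sign changes coming from $\D^-$), and the pair-terms $\widetilde{H}^{A,B}_\D(W_r)$ with $W_r=\{\alpha^*,\beta^*\}$. The Euler product $A_{DL}$ is analytic in the region (\ref{comditions}). We therefore split the sum over $\D\subseteq A$ according to how $\alpha^*,\beta^*$ sit relative to $\D$:
\begin{enumerate}
\item[(i)] $\alpha^*,\beta^*\notin\D$;
\item[(ii)] $\alpha^*\in\D$ and $\beta^*\notin\D$, or the reverse;
\item[(iii)] $\alpha^*,\beta^*\in\D$.
\end{enumerate}

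In case (i), the pole comes only from the partitions in which $\{\alpha^*,\beta^*\}$ is a block $W_r$. There $\widetilde{H}$ contains $(\z'/\z)'(1+\alpha^*+\beta^*)$, a double pole with no residue, plus analytic pieces. So case (i) contributes nothing, unless the definition (\ref{widetilde{H}^{A,B}_D}) contains a simple-pole term, which I will check.

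In case (iii), the factor $Z_{\z}(\D^-,\D^-)Z_{\z}(\D,\D)/Z^\dag_{\z}(\D^-,\D)^2$ contains $\z(1-\alpha^*-\beta^*)\z(1+\alpha^*+\beta^*)$ under the square root, divided by cross terms. After simplification, the $\alpha^*,\beta^*$ dependence should reduce to a factor with a simple pole at $\alpha^*=-\beta^*$. At $\alpha^*=-\beta^*$ we have $|D|^{-\alpha^*-\beta^*}\xx(\tfrac12+\alpha^*)\xx(\tfrac12+\beta^*)=1$, and every cross factor involving $\alpha^*$ cancels against the one involving $\beta^*$, because $\D$ evaluated at $\alpha^*=-\beta^*$ contains both $\beta^*$ and $-\beta^*$. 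Thus the residue of the term for $\D=\D'\cup\{\alpha^*,\beta^*\}$ reduces to the $\D'$ term of $J^*_{DL}(A')$, multiplied by a derivative coefficient. Expanding the factors $|D|^{-\alpha^*-\beta^*}\xx(\ldots)$ to first order around the pole gives $-\log|D|+\log q=\frac{\xx'_D}{\xx_D}$, which produces $-\frac{\xx'_D}{\xx_D}(1+\beta^*)J^*_{DL}(A')$.

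In case (ii), say $\alpha^*\in\D$ and $\beta^*\notin\D$, the cross factors $\z(1-\alpha^*+\beta^*)$-type terms together with the $\widetilde{H}$ singleton terms $\frac{\z'}{\z}(1+\alpha^*+\beta^*)$-type terms give a simple pole. The residue should reproduce the terms of $J^*_{DL}(A'\cup\{\beta^*\})$ and $J^*_{DL}(A'\cup\{-\beta^*\})$. This uses the identity $A_{DL}(\D,\D,\D)|_{\alpha^*=-\beta^*}=A_{DL}(\D\setminus\{\alpha^*\},\ldots)$: the Euler factors $V_\pm$ for $-\alpha^*=\beta^*$ and for $\beta^*\in B$ cancel, and the $Y_{DL}$ factors cancel correspondingly.

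For $\alpha^*=0$, the only candidate singularities are $\z(1+2\alpha^*)$ from $Y_{\z}$ and its inverse. These appear under the square root paired with $Z_{\z}(\D,\D)$ and against $Y_{\z}(\D)$ in the denominator, so they cancel, or else the terms with $\alpha^*\in\D$ and $\alpha^*\notin\D$ match up antisymmetrically at $\alpha^*=0$ through $|D|^{0}\xx(\tfrac12)=1$ and $\z(1+x)+\z(1-x)=1$ analytic. Checking this via $\z(1+x)+\z(1-x)=1$ gives residue $0$.

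The main obstacle is the bookkeeping in case (ii): matching every partition and subset after the residue bijectively with a term of $J^*_{DL}(A'\cup\{\pm\beta^*\})$. It also requires verifying that the $\widetilde{H}$ pieces recombine correctly. I would handle this by first doing $\#A=2$ explicitly and then inducting on blocks.
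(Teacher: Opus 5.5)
There is a genuine gap, in your case (iii). Your case split is the paper's, but when $\alpha^*,\beta^*\in\mathcal D$ the square root contains $\zeta_{\mathbb A}(1+x)\,\zeta_{\mathbb A}(1-x)$, with $x=\alpha^*+\beta^*$. These come from the ordered pairs $(\alpha^*,\beta^*),(\beta^*,\alpha^*)$ in $Z_{\zeta_{\mathbb A}}(\mathcal D,\mathcal D)$ and in $Z_{\zeta_{\mathbb A}}(\mathcal D^-,\mathcal D^-)$. One has
\[
(\log q)^2\,\zeta_{\mathbb A}(1+x)\,\zeta_{\mathbb A}(1-x)=-\frac{1}{x^2}+\frac{(\log q)^2}{12}+O(x^2),
\]
so this term has a \emph{double} pole, not a simple one. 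Its leading part is $-x^{-2}P_{\mathcal D'}(A'\setminus\mathcal D')$, which cancels the $x^{-2}$ from the block $W_r=\{\alpha^*,\beta^*\}$ in your case (i). The residue is then the first-order coefficient of \emph{all} remaining factors:
\begin{enumerate}
\item[(a)] $|D|^{-x}\mathcal X(\tfrac12+\alpha^*)\mathcal X(\tfrac12+\beta^*)$;
\item[(b)] the cross-ratio of zeta factors over $\gamma\in\mathcal D'$, which equals $1$ at $x=0$ but has nonzero derivative;
\item[(c)] $A_{DL}(\mathcal D,\mathcal D,\mathcal D)$;
\item[(d)] every $\widetilde H^{A,B}_{\mathcal D}(W_r)$.
\end{enumerate}
This is exactly what the paper's expansion lemmas for $A^{A,B}_{\mathcal D}$, $H^{A,B}_{\mathcal D}$ and $A_{DL}$ about $\alpha^*=-\beta^*$ supply. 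The linear terms $\widetilde H_{\mathcal D'}(\{\pm\beta^*\})$ and $\widetilde H_{\mathcal D'}(W_r\cup\{\pm\beta^*\})$ reassemble into $P_{\mathcal D'}\big((A'\setminus\mathcal D')\cup\{\beta^*\}\big)+P_{\mathcal D'}\big((A'\setminus\mathcal D')\cup\{-\beta^*\}\big)$, on top of the $\log|D|-\log q$ term. By differentiating only factor (a), you recover just the $J^*_{DL}(A')$ term.

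Consequently your case (ii) cannot deliver what you ask of it. Take $\alpha^*\in\mathcal D$, $\beta^*\notin\mathcal D$. Neither $Q(\mathcal D)$ nor $A_{DL}(\mathcal D,\mathcal D,\mathcal D)$ involves $\beta^*$, so there are no cross factors. Your identity $A_{DL}(\mathcal D,\mathcal D,\mathcal D)|_{\alpha^*=-\beta^*}=A_{DL}(\mathcal D\setminus\{\alpha^*\},\dots)$ is false: $\alpha^*$ does not drop out, it becomes $-\beta^*$. The only pole is $-\frac{\zeta_{\mathbb A}'}{\zeta_{\mathbb A}}(1+\alpha^*+\beta^*)$ inside the singleton block $\widetilde H^{A,B}_{\mathcal D}(\{\beta^*\})$, with residue $1$. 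This gives $P_{\mathcal D'\cup\{-\beta^*\}}(A'\setminus\mathcal D')$, and the mirror sub-case gives $P_{\mathcal D'\cup\{\beta^*\}}(A'\setminus\mathcal D')$. These are only the terms of $J^*_{DL}(A'\cup\{\pm\beta^*\})$ whose subset contains $\pm\beta^*$. The terms with $\pm\beta^*\notin\mathcal D$ come from case (iii), so the bijection you flag as the main obstacle does not exist.

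The $\alpha^*=0$ argument has a similar slip. $\zeta_{\mathbb A}(1+2\alpha^*)$ does cancel, but for $\alpha^*\in\mathcal D$ the root retains $\zeta_{\mathbb A}(1-2\alpha^*)^2$, a simple pole. It comes from the diagonal of $Z_{\zeta_{\mathbb A}}(\mathcal D^-,\mathcal D^-)$ and from $Y_{\zeta_{\mathbb A}}(\mathcal D^-)$, with $Z^\dagger$ dropping $\zeta_{\mathbb A}(1)$. For $\alpha^*\notin\mathcal D$ there is instead the pole of $\frac{\zeta_{\mathbb A}'}{\zeta_{\mathbb A}}(1+2\alpha^*)$ in $H^{A,B}_{\mathcal D}(\{\alpha^*\})$. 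Once you check that every $\alpha^*$-dependent factor collapses at $\alpha^*=0$, these give residues $\pm\frac12$ times the same $P$-term. They cancel under $\mathcal D'\leftrightarrow\mathcal D'\cup\{\alpha^*\}$; the identity $\zeta_{\mathbb A}(1+x)+\zeta_{\mathbb A}(1-x)=1$ plays no role.
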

	
	In Section \ref{ndensity}, we derive the formula for the $n$-level density for the zeros of quadratic Dirichlet $L$-functions over function fields.
	
	\begin{theorem}[$n$-level Density of Zeros of Quadratic Dirichlet $L$-functions] \label{n-density theorem.}
		Assume Conjecture \ref{Our Ratios Conjecture2.}. Then
		
		\begin{equation*}
			\begin{split}
				\sum_{D\in\h} &\sum\nolimits^{n,0} f\left(\gamma_{j_1,D},\cdots,\gamma_{j_n,D}\right) \\
				& =\frac{1}{\left(2\pi\right)^n} \sum_{\substack{K\cup L\cup M \\=\{1,\cdots,n\}}} \left(-1\right)^{|M|} \left(\int_0^{\pi /\ln q}\right)^n J_{DL}^*\left(-iz_K\cup iz_L,iz_M\right) \\
				& \ \ \ \times f\left(z_1,\cdots,z_n\right) dz_1\cdots dz_n + o\left(|D|\right),
			\end{split}
		\end{equation*}
	where the sum $\sum\nolimits^{n,R} $ is defined in (\ref{n,R}), $z_K=\{z_k:k\in K\}$, $-z_L=\{-z_l:l\in L\}$, $-z_M=\{-z_m:m\in M\}$, and
			\begin{equation*}
				\begin{split}
					J^*_{DL}(A,B) &= \sum_{P\in\mathbb{P}_{2g+1}} \prod_{\beta\in B} \frac{\xx'_P}{\xx_P}\left(\tfrac{1}{2}-\beta,\x_P\right) \sum_{\D\subseteq A} \left|P\right|^{- \sum_{\delta\in \D}\delta }   \\
					&\ \  \ \times \prod_{\delta\in \D} \xx\left(\tfrac{1}{2}+\delta\right) \sqrt{\frac{ Z_{\z}(\D^{-},\D^{-}) Z_{\z}(\D,\D) Y_{\z}(\D^{-})  }{  Z_{\z}^\dag( \D^{-},\D)^2  Y_{\z}(\D)}} \\
					& \ \ \ \times \left( -\log q\right)^{|\D|} A_{DL}^*(\D,\D,\D)\\
					& \ \ \ \times \sum_{\substack{A\setminus \D = \sum_rW_r\\ |W_r|\leqslant 2}} \prod_{r=1}^R \widetilde{H}^{A,B}_\D(W_r),
				\end{split}
			\end{equation*}
			where
			\begin{equation*}\label{widetilde{H}^{A,B}_DP}
				\begin{split}
					\widetilde{H}^{A,B}_\D (W_r)&= \prod_{\alpha\in W_r}\frac{\partial}{\partial \alpha} H^{A,B}_\D\\
					&= H^{A,B}_\D(W_r)+A^{A,B}_\D(W_r)
				\end{split}
			\end{equation*}
			with
			
			%
			\begin{equation*}
				A^{A,B}_\D(W_r) = \prod_{\alpha\in W_r} \frac{\partial}{\partial \alpha} \log A_{DL}^* (A,B,\D) \Bigg\vert_{B=A},
			\end{equation*}
			
			\begin{equation*}\label{A_{DL}P}
				\begin{split}
					A_{DL}^*(A,B,\D) &= Y_{DL}(A,B,\D)^{-1} \\
					& \ \ \ \times \prod_{\substack{P \text{ monic}\\ \text{irreducible}}}  \frac{1}{2} \left(\frac{V_{-}(B)}{V_{-}((A\setminus \D) \cup \D^{-})} +  \frac{V_{+}(B)}{V_{+}((A\setminus \D) \cup \D^{-})}\right),
				\end{split}
			\end{equation*}
			and $V_+(A)$, $V_-(A)$ are defined in equation (\ref{V_+}),  $Y_{DL}(A,B,\D)$ and ${H}^{A,B}_\D(W_r)$ are defined in equations (\ref{Y_{DL}}) and (\ref{{H}^{A,B}_D}) respectively.
	\end{theorem}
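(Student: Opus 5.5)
The argument follows Mason and Snaith: express the $n$-level sum as an $n$-fold contour integral of $\prod_j \frac{L'}{L}(\tfrac12+\alpha_j,\x_D)$, then evaluate the average with Theorem \ref{J^*_{DL}.}.

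First, for each $D\in\h$, $\mathcal L(u,\x_D)$ is a polynomial in $u$ of degree $2g$, so $L(s,\x_D)$ is periodic in $\Im s$ with period $2\pi/\log q$. Its zeros $\tfrac12+i\gamma_{j,D}$ are therefore counted on the torus $[-\pi/\log q,\pi/\log q)$. Since the test function $f$ is even and periodic in each variable, I will write
\begin{equation*}
\sum\nolimits^{n,0} f(\gamma_{j_1,D},\dots,\gamma_{j_n,D}) = \frac{1}{(2\pi i)^n}\oint\cdots\oint \prod_{j=1}^n \frac{L'}{L}(\tfrac12+\alpha_j,\x_D)\, f(-i\alpha_1,\dots,-i\alpha_n)\,d\alpha_1\cdots d\alpha_n .
\end{equation*}
Each contour is the boundary of the rectangle with vertical sides $\RR(\alpha_j)=\pm c$, where $\frac{1}{\log|D|}\ll c<\tfrac14$, and horizontal sides at $\II\alpha_j=\pm\pi/\log q$. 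By periodicity the horizontal sides cancel. The restriction to distinct indices in $\sum^{n,0}$ is imposed at the end: I will relate it to the unrestricted $n$-fold sum by inclusion–exclusion over set partitions, with the diagonal terms accounted for as in the residue analysis below.

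Next, on the vertical side $\RR\alpha_j=-c$ I will apply the functional equation $L(s,\x_D)=\xx_D(s)L(1-s,\x_D)$. This gives
\begin{equation*}
\frac{L'}{L}(\tfrac12+\alpha,\x_D)=\frac{\xx_D'}{\xx_D}(\tfrac12+\alpha)-\frac{L'}{L}(\tfrac12-\alpha,\x_D),
\end{equation*}
which moves every variable to the right line $\RR=c$. Expanding the product over $j$, each variable is assigned to one of three index sets:
\begin{itemize}
\item $K$: variables already on the right line, which contribute a shift $-iz_k$ (after parametrising);
\item $L$: variables reflected from the left line, which contribute a shift $+iz_l$;
\item $M$: variables on the left line that keep the factor $\xx'_D/\xx_D$, which contribute the sign $(-1)^{|M|}$ and no $L$-function.
\end{itemize}
Summing over $D$ and bringing the sum inside the integrals, the part involving $L$-functions becomes $\sum_D \prod_{\alpha\in A}\frac{L'}{L}(\tfrac12+\alpha,\x_D)$ with $A=-iz_K\cup iz_L$ and $\RR\alpha=c>0$. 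This is exactly $J_{DL}(A)$: each derivative $\partial_\alpha R_{DL}(A,B)|_{B=A}$ produces $L'/L$. The hypotheses of Theorem \ref{J^*_{DL}.} hold, so $J_{DL}(A)=J^*_{DL}(A)+o(|D|)$. Attaching the factors $\frac{\xx'_D}{\xx_D}$ from $M$ gives the two-set object $J^*_{DL}(A,B)$ with $B=iz_M$.

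The final step is to let $c\to0$. Here I use evenness of $f$ to fold the integrals onto $[0,\pi/\log q]$, collect the $2^n$ sign choices into the sum over $K\cup L\cup M$, and obtain the stated normalisation $(2\pi)^{-n}$.

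The main obstacle is justifying the move to $c=0$. $J^*_{DL}$ has poles at $\alpha_j=-\alpha_k$ (from $\z(1+\alpha+\beta)$ in $Z^\dag$) and possibly at $\alpha=0$, so the shifted contours can cross singularities. For these I will use Theorem \ref{Residue.}. It shows that the residue at $\alpha=0$ vanishes, and that the residue at $\alpha^*=-\beta^*$ decomposes into lower-order $J^*_{DL}$ terms plus a $\frac{\xx_D'}{\xx_D}$ term. These are precisely the contributions that cancel against the diagonal (non-distinct index) terms in the inclusion–exclusion, which turns the full sum into $\sum^{n,0}$. I would carry out this bookkeeping by induction on $n$, in the same way as Mason–Snaith's Theorem on $n$-level densities. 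Finally, the conditions in (\ref{comditions}) keep the error term $o(|D|)$ uniform along the contours, which is what allows the $o(|D|)$ error to be integrated over the compact torus.
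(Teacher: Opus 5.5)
Your argument follows the paper's proof step for step. The steps are: Cauchy's theorem on the vertical lines $C_\pm$; the functional equation on $C_-$, producing the split $K\cup L\cup M$; Theorem~\ref{J^*_{DL}.}, applied while the shifts still have real part $c\gg 1/\log|D|$; moving the contours of $J^*_{DL}$ onto the imaginary axis, with the residues of Theorem~\ref{Residue.} removing the coincident-index terms (Theorem~\ref{prev}, which the paper defers to Mason--Snaith); and finally folding by evenness. Your choice $\frac{1}{\log|D|}\ll c<\frac14$ is in fact the one compatible with (\ref{comditions}); the range for $c$ stated in the paper is not.

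The step that fails as written is the normalisation. Your displayed identity is off in two ways. First, its left side should be the unrestricted sum $\sum\nolimits^{n,n}$, which you note yourself. Second, it needs a factor $2^n$: the torus contour integral picks up a residue at every ordinate of either sign, while $\sum\nolimits^{n,R}$ runs only over the positive ordinates $\gamma_{j,D}$, $j\ge 1$. That $2^n$ comes from the symmetry $\gamma\mapsto-\gamma$ of the zeros (the character $\x_D$ is real) together with the evenness of $f$. It is exactly what cancels the $2^n$ created by folding $[-\pi/\log q,\pi/\log q]^n$ onto $[0,\pi/\log q]^n$. The sum over $K\cup L\cup M$ absorbs no power of $2$. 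It only makes the integrand even in each $z_j$ (the map $z_j\mapsto -z_j$ swaps $j$ between $K$ and $L$, and $\xx_D'/\xx_D$ is constant), and that evenness is what justifies the folding. So if your left side runs over all zeros on the torus, as your display says, the constant you obtain is $2^n(2\pi)^{-n}$, not $(2\pi)^{-n}$.

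Two smaller points. The contours should be shifted one variable at a time, through the interpolating sums $\sum\nolimits^{n,R}$ of Theorem~\ref{prev}, rather than all at once by letting $c\to 0$. The relevant poles, at $z_R=\pm z_k$ with $z_k$ already moved, then lie on the imaginary axis rather than being crossed. They are handled by principal values, with the half-residues from the $C_+$ and $C_-$ terms combining. You should also check, as the paper does at the end of Section~\ref{ndensity}, that the integrand summed over $K\cup L\cup M$ has no poles at $z_i=\pm z_j$, so that the principal values can be dropped in the final formula.
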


	Finally, in Section \ref{1-level}, we test our formula for the $n$-level density of the zeros of  quadratic Dirichlet $L$-functions. We use the formula above to compute the 1-level density and compare it with Andrade and Keating's result in \cite{a&kConInMo} confirming that the results matches.

	\subsection{$n$-level Density over the Family of $L$-functions Associated to Monic Irreducible Polynomials}
	
	Before we end this section, we need to point out that the calculation in this paper had also been made for the family of $L$-functions associated with the prime character $P\in\f$.
	
	As mentioned in the introduction, Andrade, Jung, and Shamesldeen \cite{Andrade Jung Shames} deduced the ratios conjecture for the family of $L$-functions associated with the prime character. As an application of the conjecture, they computed the one-level density for the same family. The difference between their results and Andrade and Keating's results \cite{a&kConInMo}, for the family of quadratic Dirichlet $L$-functions, is the Euler product that appears in the quadratic family when averaging over all discriminants $D\in\h$. 
	
	The same happens when we compute the $n$-level density for the family of $L$-functions associated with the prime character $P$. The prime factor that appears in $A_{D}$, in (\ref{A_{DL}}), will not appear in the case of prime character, since when averaging over all primes $P\in\mathbb{P}_{2g+1}$ we get
	
	\begin{equation*}
		\begin{split}
			\lim_{g\to\infty} \frac{1}{\#\mathbb{P}_{2g+1}} \sum_{P\in\mathbb{P}_{2g+1}} \x_P(f) = \begin{cases}
				1 & \text{ if } f \text{ square} \\
				0 & \text{ if } f \text{ non-square.}
			\end{cases}
		\end{split}
	\end{equation*}
	Similarly, we can deduce the $n$-level density for the family of $L$-functions associated with prime character as follows:
	
	\begin{theorem}[$n$-level Density of Zeros of families of Prime Quadratic Dirichlet $L$-functions] \label{n-density theoremP}
		Let $A=\{\alpha_1,\cdots,\alpha_K\}$ and $B=\{\gamma_1\cdots,\gamma_Q\}$ such that $\alpha_i, 1\leqslant k \leqslant K$ and $\gamma_m, 1\leqslant m \leqslant Q$ satisfies the restrictions in (\ref{comditions}).
		Then
		
		\begin{equation*}
			\begin{split}
				\sum_{P\in\mathbb{P}_{2g+1}} &\sum\nolimits^{n,0} f\left(\gamma_{j_1,P},\cdots,\gamma_{j_n,P}\right) \\
				& =\frac{1}{\left(2\pi\right)^n} \sum_{\substack{K\cup L\cup M \\=\{1,\cdots,n\}}} \left(-1\right)^{|M|} \left(\int_0^{\pi /\ln q}\right)^n J_{DL}^*\left(-iz_K\cup iz_L,iz_M\right) \\
				& \ \ \ \times f\left(z_1,\cdots,z_n\right) dz_1\cdots dz_n + o\left(|P|\right).
			\end{split}
		\end{equation*}
	\end{theorem}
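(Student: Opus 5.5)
The plan is to run the argument that gives Theorem \ref{n-density theorem.}, with the family $\h$ replaced by $\mathbb{P}_{2g+1}$ and Conjecture \ref{Our Ratios Conjecture2.} replaced by Conjecture \ref{rationconjecture}. The first step is to rewrite Conjecture \ref{rationconjecture} in set form. For $\D\subseteq A$ the sum over $\varepsilon\in\{-1,1\}^K$ corresponds to choosing which shifts are negated ($\varepsilon_k=-1$ exactly when $\alpha_k\in\D$). Then $|P|^{\frac12\sum(\varepsilon_k-1)\alpha_k}=|P|^{-\sum_{\delta\in\D}\delta}$ and $\prod_k\xx(\tfrac12+\tfrac{\alpha_k-\varepsilon_k\alpha_k}{2})=\prod_{\delta\in\D}\xx(\tfrac12+\delta)$. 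The factor $Y_{\mathfrak P}$ becomes $Y_{DL}(A,B,\D)$ of (\ref{Y_{DL}}), and $A_{\mathfrak P}$ becomes $A^*_{DL}(A,B,\D)$. The only change from (\ref{A_{DL}}) is that the factors $(1+|P|^{-1})^{-1}$ and $+|P|^{-1}$ are absent, which reflects the orthogonality relation for $\x_P$ displayed just before the statement.

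Next I differentiate with respect to every $\alpha\in A$ and set $B=A$, exactly as in the proof of Theorem \ref{J^*_{DL}.}. For $\alpha\in\D$, the derivative falls on $|P|^{-\delta}\xx(\tfrac12+\delta)$ and on the zeta factor that has a pole at $B=A$. This produces the factor $(-\log q)^{\#\D}$ together with the square-root zeta ratio involving $Z^\dag_{\z}$. For $\alpha\notin\D$, the derivatives are distributed over $\log(Y_{DL}A^*_{DL})$. Only blocks $W_r$ with $|W_r|\le2$ survive, because after setting $B=A$ any mixed derivative of order at least $3$ of the logarithm of a product of pairwise factors vanishes. This gives the partition sum of $\widetilde H^{A,B}_\D(W_r)=H^{A,B}_\D(W_r)+A^{A,B}_\D(W_r)$, with $A^*_{DL}$ in place of $A_{DL}$.

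I then need the analogue of Theorem \ref{Residue.}. The same residue computation at $\alpha^*=-\beta^*$ goes through, with $\xx'_P/\xx_P=-\log|P|+\log q$. The Euler product $A^*_{DL}$ satisfies the same symmetry that was used there, namely $A^*_{DL}(\D,\D,\D)$ is invariant under moving an element between $\D$ and $\D^-$, so the argument carries over.

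The $n$-level density itself comes from the argument principle. Write $\sum^{n,0}$ via inclusion–exclusion over distinct zeros. Express each zero count as a contour integral of $L'/L(\tfrac12+\cdot,\x_P)$ around a rectangle whose horizontal sides are identified by $2\pi i/\log q$-periodicity, so only the two vertical lines $\RR=\pm c$ remain. On the left line, apply the functional equation $L(s,\x_P)=\xx_P(s)L(1-s,\x_P)$. This turns $L'/L(\tfrac12-\alpha)$ into $\frac{\xx'_P}{\xx_P}(\tfrac12-\alpha)-L'/L(\tfrac12+\alpha)$. Expanding the product over $n$ variables gives the decomposition $K\cup L\cup M$ and the sign $(-1)^{|M|}$, with $M$ collecting the $\xx'_P/\xx_P$ factors; those are the $\prod_{\beta\in B}$ factor in $J^*_{DL}(A,B)$. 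Averaging the remaining products of $L'/L$ over $\mathbb{P}_{2g+1}$ uses the differentiated conjecture, provided $\RR(\alpha)\gg 1/\log|P|$. Finally, move the contours to $\RR=0$, which forces $f$ to be even, and fold to $[0,\pi/\log q]$.

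The main obstacle is this last step: justifying the move of the contours onto the imaginary axis. It requires that $J^*_{DL}(A,B)$ have no poles on the path, i.e.\ that the poles at $\alpha_i=-\alpha_j$ and $\alpha=0$ cancel. For this I use the prime analogue of Theorem \ref{Residue.}. Its residues recombine precisely into the terms removed by the inclusion–exclusion defining $\sum^{n,0}$, and the residue at $\alpha^*=0$ is $0$. I would first check this recombination for $n=2$. Then I would confirm consistency by taking $n=1$ and recovering (\ref{a.j.sh.1-level}).
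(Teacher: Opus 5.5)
Your route is the paper's. The paper gives no separate proof of this theorem. It only notes that the orthogonality relation for $\x_P$ removes $(1+|P|^{-1})^{-1}$ and $+|P|^{-1}$, so $A_{DL}$ becomes $A^*_{DL}$. It then reruns the $\h$ argument with $D$ replaced by $P$: set-form rewriting, differentiation, residue theorem, contour shift. Your set-form translation, the functional-equation step producing $(-1)^{|M|}$ and the $\xx'_P/\xx_P$ factors, and the final folding all match. (For $\alpha\in\D$, only the term in which $\partial_\alpha$ lands on the vanishing factor $1/\z(1+\beta_\alpha-\alpha)$ survives; that is where $(-\log q)^{\#\D}$ comes from.)

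\textbf{The step that fails.} Your reason for keeping only blocks with $|W_r|\le 2$ is wrong.
\begin{itemize}
\item Mixed derivatives of order $\ge 3$ do vanish for the logarithm of a product of pairwise factors. That is why $H^{A,B}_\D(W_r)=0$ for $|W_r|\ge 3$, and it covers $Y_{DL}$ and the $Y_{DL}^{-1}$ inside $A^*_{DL}$.
\item It does not cover the Euler factor $\tfrac12\bigl(V_-(B)/V_-(\cdot)+V_+(B)/V_+(\cdot)\bigr)$, which is a sum of two products. For $\D=\emptyset$, its logarithm is a separable part plus $\log\cosh T$, where $T=\sum_{\alpha\in A}\operatorname{artanh}(|P|^{-1/2-\alpha})-\sum_{\beta\in B}\operatorname{artanh}(|P|^{-1/2-\beta})$.
\item At $B=A$ we have $T=0$. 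So the mixed derivative in four distinct $\alpha_i$ is $(\log\cosh)^{(4)}(0)=-2$ times $\prod_{i=1}^4\partial_{\alpha_i}\operatorname{artanh}(|P|^{-1/2-\alpha_i})$, summed over $P$.
\item For real $\alpha_i$ every summand has the same sign, so $A^{A,B}_\emptyset(W)\neq 0$ when $|W|=4$.
\item For $\D\neq\emptyset$, $T|_{B=A}\neq 0$, so blocks of size $3$ survive as well.
\end{itemize}
What Lemma \ref{H} actually gives is a sum over all set partitions of $A\setminus\D$, with $\widetilde H^{A,B}_\D(W_r)=A^{A,B}_\D(W_r)$ when $|W_r|\ge 3$.

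The paper writes the restriction $|W_r|\le 2$ into $J^*_{DL}$ without deriving it; its proof only shows that $H^{A,B}_\D(W_r)$ vanishes. So you cannot fill this gap from the paper; keep the larger blocks. Nothing changes for $n\le 2$, so your $n=1$ check will not detect the problem. For $n\ge 5$, however, a size-$4$ block can carry $n-4$ factors $\xx'_P/\xx_P=-2g\log q$. Such a contribution is not, in general, absorbed by the $o(|P|)$ error.

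\textbf{A smaller point.} The symmetry you invoke for the residue theorem is not true: $A^*_{DL}(\D,\D,\D)$ is not invariant under moving an element between $\D$ and $\D^-$. Replacing one $\delta$ by $-\delta$ inverts its factor in $V_\pm(\D)/V_\pm(\D^-)$. What the case $\alpha^*,\beta^*\in\D$ actually uses is the analogue of the paper's expansion lemma for $A_{DL}(\D,\D,\D)$:
\begin{itemize}
\item At $\alpha^*=-\beta^*$ the pair $\{\alpha^*,\beta^*\}$ is its own negative, so its contributions to $V_\pm(\D)/V_\pm(\D^-)$ and to $Y_{DL}^{-1}$ cancel.
\item The first-order term then gives $A^{A,B}_{\D'}(\{\pm\beta^*\})$.
\end{itemize}
This uses only multiplicativity of $V_\pm$ and of the zeta products, so it does carry over to $A^*_{DL}$.
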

	
	We now compute the one-level density for the family of prime characters using Theorem \ref{n-density theoremP}.
	
	\begin{equation*}
		\begin{split}
			\int_{-\pi/\log q}^{\pi/\log q} & f(z)   \left(J_{DL}^*(iz,\emptyset)+ J_{DL}^*(-iz,\emptyset)- J^*_{DL}(\emptyset,-iz\right) dz \\
			&= \int_{-\pi/\log q}^{\pi/\log q} f(z)  \sum_{P\in\p} \Bigg[\log|P|- \frac{\xx'\left(\tfrac{1}{2}-iz\right)}{\xx\left(\tfrac{1}{2}-iz\right)}\\
			& \ \ \ + 2 \left(\frac{\z'}{\z}(1+2iz) - \log q |P|^{-iz} \xx\left(\tfrac{1}{2}+iz\right) \z(1-2iz) \right)   \Bigg]dz.
		\end{split}
	\end{equation*}
	Hence, we can see that the above result agrees with the Andrade, Jung and Shamesaldeen result in (\ref{a.j.sh.1-level}).


	\section{Ratios of Quadratic Dirichlet $L$-functions}\label{The Ratios Conjecture}

	Let $K,Q$ be two positive integers, $\alpha_k$ with $1\le k\le K,$ and $\gamma_m$ with $1 \le m \le Q,$ be complex numbers with positive real part. The aim of this section is to rewrite the results of Andrade and Keating \cite{a&kConInMo} as presented in conjecture \ref{Andrade&Keating Ratios Conjecture} into appropriate set notation. As in \cite{a&kConInMo}, the method below is conjectured to give an asymptotic formula for the ratios of $L$-functions averaged over all square-free polynomials $D\in\h$ with an error term $o\left(|D|\right)$ under the restrictions imposed by (\ref{comditions}).
	
	Let $A=\left\{\alpha_1,\cdots,\alpha_K\right\}$ and $B=\left\{\gamma_1,\cdots,\gamma_Q\right\}$ be two finite sets of complex numbers that satisfies the restrictions of (\ref{comditions}). Consider the ratios of Dirichlet $L$-functions
	
	\begin{equation*}
		\begin{split}
			R_{DL}(A,B) &= \sum_{D\in\h} \frac{\prod_{k=1}^K L\left(\frac{1}{2}+\alpha_k,\x_D\right)}{\prod_{m=1}^Q L\left(\frac{1}{2}+\gamma_m,\x_D\right)}.
		\end{split}
	\end{equation*}
	
	We replace the $L\left(\frac{1}{2}+\alpha_k,\x_D\right)$ in the numerator with the ``approximate'' functional equation
	
	\begin{equation*}
		L\left(\tfrac{1}{2}+\alpha_k,\x_D\right) = \sum_{\substack{f_k \in \mm_g}} \frac{\x_D(f_k)}{|f_k|^{\frac{1}{2}+\alpha_k}} + \xx_D\left(\tfrac{1}{2}+\alpha_k\right) \sum_{\substack{f_k \in\mm_{g-1}}} \frac{\x_D(f_k)}{|f_k|^{\frac{1}{2}-\alpha_k}},
	\end{equation*}
	and expand the $L\left(\tfrac{1}{2} +\gamma_m,\x_D\right)$ in the denominator into series,
	
	\begin{equation*}
		\frac{1}{L\left(\frac{1}{2}+\gamma_m,\x_D\right)} = \sum_{h_m \in\mm} \frac{\mu(h_m) \x_D(h_m)}{|h_m|^{\frac{1}{2}+\gamma_m}}
	\end{equation*}
	where $\mu(f)$ and $\x_D(f)$ are defined in Section \ref{background}. Combining the two terms together we have
	
	\begin{equation*}
		\begin{split}
			R_{DL}(A,B) &\approx \sum_{D\in\h} \sum_{\varepsilon_k\in\{-1,1\}^K} |D|^{\sum_k\frac{\varepsilon_k-1}{2}\alpha_k} \prod_{k=1}^K \xx\left(\tfrac{1}{2}+\tfrac{1-\varepsilon_k}{2}\alpha_k\right) \\
			& \ \ \  \times \prod_{k=1}^K \sum_{f_k \in\mm } \frac{\x_D(f_k)}{|f_k|^{\frac{1}{2}+\varepsilon_k\alpha_k}} \prod_{m=1}^Q \sum_{h_m \in\mm} \frac{\mu(h_m) \x_D(h_m)}{|h_m|^{\frac{1}{2}+\gamma_m}}.
		\end{split}
	\end{equation*}
	By computing the expected value of the sum using that
	
	\begin{equation*}
		\begin{split}
			\lim_{g\to\infty} \frac{1}{\#\h} \sum_{D\in\h} \x_D(f) = \begin{cases}
				\underset{\substack{P \text{ prime} \\ P\mid f}}{\prod}\left(1+\frac{1}{|P|}\right)^{-1} & \text{ if } f \ \text{is the square of a polynomial} \\
				0 & \text{ otherwise,}
			\end{cases}
		\end{split}
	\end{equation*}
	we can express $R_{DL}(A,B)$ as
	
	\begin{equation*}
		\begin{split}
			R_{DL}(A,B) &\approx \sum_{D\in\h} \sum_{\varepsilon_k\in\{-1,1\}^K} |D|^{\sum_k\frac{\varepsilon_k-1}{2} \alpha_k} \prod_{k=1}^K \xx\left(\tfrac{1}{2}+\tfrac{1-\varepsilon_k}{2}\alpha_k\right) \\
			& \ \ \ \ \ \ \times \sum_{f_1\cdots f_Kh_1\cdots h_Q=\square} \frac{a\left(f_1\cdots f_Kh_1\cdots h_Q\right)\mu(h_1) \cdots \mu(h_Q)}{\prod_k |f_k|^{\frac{1}{2}+\varepsilon_k\alpha_k}\prod_m |h_m|^{\frac{1}{2}+\gamma_m}} ,
		\end{split}
	\end{equation*}
	where $a(f)=\underset{\substack{P \text{ prime} \\ P\mid f}}{\prod}\left(1+\frac{1}{|P|}\right)^{-1}$. Note that, provided that the real parts of $\alpha_k,\gamma_m$ are sufficiently small, we can rewrite $R_{DL}(A,B)$ in terms of Euler products. Thus,
	
	\begin{equation*}
		\begin{split}
			&R_{DL}(A,B) \approx \sum_{D\in\h} \sum_{\varepsilon_k\in\{-1,1\}^K} |D|^{\sum_k\frac{\varepsilon_k-1}{2} \alpha_k} \prod_{k=1}^K \xx\left(\tfrac{1}{2}+\tfrac{1-\varepsilon_k}{2}\alpha_k\right) \\
			& \ \ \ \times \prod_{\substack{P \text{ monic} \\ \text{irreducible}}} \left(1+\left(1+\frac{1}{|P|}\right)^{-1}\sum_{\underset{k}{\sum}a_k+\underset{m}{\sum}c_m \text{ even}} \frac{\prod_{m=1}^Q \mu\left(P^{c_m}\right)}{|P|^{\underset{k}{\sum}a_k(\frac{1}{2}+\varepsilon_k\alpha_k)+\underset{m}{\sum}c_m(\frac{1}{2}+\gamma_m)}}\right),
		\end{split}
	\end{equation*}
	where
	\begin{equation*}
		\mu\left(P^c\right) = \begin{cases}
			1 & c=0\\
			-1 & c=1\\
			0 & \text{ otherwise}.
		\end{cases}
	\end{equation*}
	The M\"{o}bius function let us divide the set $B=\{\gamma_1,\cdots,\gamma_Q\}$ into two sets $F$ and $G$, where $F$ is the set of all index $m\in\{1,\cdots,Q\}$ that represents the $\gamma_m$'s associated to $c_m=0$ and $G$ is the set of all index $m\in\{1,\cdots,Q\}$ that represents the $\gamma_m$'s associated to $c_m=1$, in such a way that $F\cup G=\{1,\cdots,Q\}$. With this in hand, we can write
	
	\begin{equation}\label{co}
		\begin{split}
			&R_{DL}(A,B) \approx \sum_{D\in\h} \sum_{\varepsilon_k\in\{-1,1\}^K} |D|^{\sum_k\frac{\varepsilon_k-1}{2} \alpha_k} \prod_{k=1}^K \xx\left(\tfrac{1}{2}+\tfrac{1-\varepsilon_k}{2}\alpha_k\right) \\
			&  \times \prod_{\substack{P \text{ monic} \\ \text{irreducible}}} \left(1+\left(1+\frac{1}{|P|}\right)^{-1}\sum_{\substack{F\cup G\\=\{1,\cdots,Q\}}}\sum_{\substack{\underset{k}{\sum}a_k+\#G \\\text{ even}}} \frac{\left(-1\right)^{\#G}}{|P|^{\underset{k}{\sum} (\frac{1}{2}+\varepsilon_k\alpha_k)a_k + \underset{m\in G}{\sum} (\frac{1}{2}+\gamma_m)}}\right).
		\end{split}
	\end{equation}
	
	Now, we want to identify which terms in (\ref{co}) are divergent for small $\RR(\alpha_k)$ and $\RR(\gamma_m)$. Thus, recalling the restriction in (\ref{comditions}), $|\RR(\alpha_j)|,|\RR(\gamma_j)|<\frac{1}{4}$, we can see that the terms of the form $\frac{1}{|P|^{2-2\alpha_j}}$ and higher powers are convergent. Therefore, the only terms with indices such that $\sum_ka_k+\#G\leqslant 2$ are divergent. Now, we have that $\sum_ka_k+\#G$ is even, thus the only values are when it is $0$ and $2$. However, we have already pulled out the term when it is $0$ in (\ref{co}). Hence we only need to consider the terms such that $\sum_k a_k+\#G=2$. In the end, we have that
	
	\begin{equation*}
		\begin{split}
			R_{DL}(A,B) &\approx \sum_{D\in\p} \sum_{\varepsilon_k\in\{-1,1\}^K} |D|^{\sum_k\frac{\varepsilon_k-1}{2} \alpha_k} \prod_{k=1}^K \xx\left(\tfrac{1}{2}+\tfrac{1-\varepsilon_k}{2}\alpha_k\right) \\
			& \ \ \ \times \prod_{\substack{P \text{ monic} \\ \text{irreducible}}} \Bigg(1+ \left(1+\frac{1}{|P|}\right)^{-1} \Bigg(\sum_k \frac{1}{|P|^{1+2\varepsilon_k\alpha_k}} + \sum_{\substack{k,j\\k<j}} \frac{1}{|P|^{1+\varepsilon_k\alpha_k+\varepsilon_l\alpha_l}} \\
			&\ \ \  +\sum_{k,m} \frac{-1}{|P|^{1+\varepsilon_k\alpha_k+\gamma_m}} + \sum_{\substack{m,r\\m<r}} \frac{1}{|P|^{1+ \gamma_m+\gamma_r}}  + \cdots\Bigg)\Bigg),
		\end{split}
	\end{equation*}
	where $\cdots$ indicates terms that converge. 
	
	As in Andrade and Keating's paper \cite{a&kConInMo}, we can write the contribution of these non-convergent terms in the form of products of zeta functions associated with $\A=\f$. Recall that,
	
	\begin{equation}\label{Zz1}
		\begin{split}
			\varepsilon A &= \{\varepsilon_1\alpha_1,\cdots,\varepsilon_K\alpha_K\},\\
			Z_{\z}(A,B)&= \prod_{\substack{\alpha\in A \\ \beta\in B}} \z(1+\alpha+\beta),\\
			Y_{\z}(A) &= \prod_{\substack{\alpha\in A}} \z(1+2\beta),\\
		\end{split}
	\end{equation}
	so, using the above we have that
	
	\begin{equation*}
		\begin{split}
			&\frac{\underset{j\leqslant k\leqslant K}{\prod} \z\left(1+\varepsilon_j\alpha_j+\varepsilon_k\alpha_k\right) \underset{m< r\leqslant Q}{\prod} \z\left(1+\gamma_m+\gamma_r\right)}{\prod_{k=1}^K\prod_{m=1}^Q \z\left(1+\varepsilon_k\alpha_k+\gamma_m\right)}\\
			&\; \; \; \; \;\;\;\;\;\;\;\;\;\;\;\;\;\;\;\; \; \; \; \;\;\;\;\;\;\; \; \; =\sqrt{\frac{Z_{\z}\left(\varepsilon A,\varepsilon A\right) Y_{\z}\left(\varepsilon A\right) Z_{\z}\left(B,B\right)}{Z_{\z}\left(\varepsilon A,B\right)^2 Y_{\z}\left(B\right)}}.
		\end{split}
	\end{equation*}
	So we can write
	
	\begin{equation*}
		\begin{split}
			R_{DL} (A,B) &= \sum_{D\in\h} \sum_{\varepsilon\in\{-1,1\}^k} |D|^{\sum_k \frac{1-\varepsilon_k}{2}\alpha_k} \prod_{k=1}^K \xx\left(\tfrac{1}{2}+\tfrac{1-\varepsilon_k}{2}\alpha_k\right)\\
			& \ \ \ \times Y_{DL}(A,B,\varepsilon) A_{DL}(A,B,\varepsilon),
		\end{split}
	\end{equation*}
	with
	
	\begin{equation*}
		\begin{split}
			Y_{DL}(A,B,\varepsilon) &= \sqrt{\frac{Z_{\z}(\varepsilon A,\varepsilon A) Y_{\z}(\varepsilon A) Z_{\z}(B,B)}{Z_{\z}(\varepsilon A,B)^2 Y_{\z}(B)}},\\
		\end{split}
	\end{equation*}
	\begin{equation*}
		\begin{split}
			&A_{DL}(A,B,\varepsilon) = Y_{DL}(A,B,\varepsilon)^{-1} \\
			& \times \prod_{\substack{P \text{ monic} \\ \text{irreducible}}} \left(1+\left(1+\frac{1}{|P|}\right)^{-1}\sum_{\substack{F\cup G\\=\{1,\cdots,Q\}}}\sum_{\substack{\underset{k}{\sum}a_k+\#G \\\text{ even}}} \frac{\left(-1\right)^{\#G}}{|P|^{\underset{k}{\sum} (\frac{1}{2}+\varepsilon_k\alpha_k)a_k + \underset{m\in G}{\sum} (\frac{1}{2}+\gamma_m)}}\right).
		\end{split}
	\end{equation*}{\color{white}'}
	
	Recall the restrictions in (\ref{comditions}), the bound of $\frac{1}{4}$ on the absolute value of the real parts of the shifts are to prevent divergence of the Euler products that appear in the ratios conjecture, see Conrey and Snaith section 2 in \cite{Conrey&Snaith} . Therefore, the Euler product $A_{DL}(A,B,\varepsilon)$ is convergent for all of the variables in small disks around $0$.
	
	It remains to write $A_{DL}(A,B,\varepsilon)$ into a closed form expression. Remind that,
	
	\begin{equation}\label{V_+1}
		V_{+}(A) = \prod_{\alpha\in A} \left(1+\frac{1}{|P|^{\frac{1}{2}+\alpha}}\right),\ \text{and} \ \ \ V_{-}(A) = \prod_{\alpha\in A} \left(1-\frac{1}{|P|^{\frac{1}{2}+\alpha}}\right).\\
	\end{equation}
	Therefore we can write $A_{DL}(A,B,\D)$, where $\D\subseteq A,$ as
	
	\begin{equation}\label{A_{DL}1}
		\begin{split}
			&A_{DL}(A,B,\D) = Y_{DL}(A,B,\D)^{-1} \\
			&  \times \prod_{\substack{P \text{ monic} \\ \text{irreducible}}} \left(1+\frac{1}{|P|}\right)^{-1}  \left(\frac{V_{-}(B)}{2V_{-}((A\setminus \D) \cup \D^{-})} +  \frac{V_{+}(B)}{2V_{+}((A\setminus \D) \cup \D^{-})}+\frac{1}{|P|}\right),
		\end{split}
	\end{equation}
	where
	
	\begin{equation}\label{Y_{DL}1}
		\begin{split}
			Y_{DL}&(A,B,\D) \\
			&= \sqrt{\frac{Z_{\z}((A\setminus \D) \cup \D^{-},(A\setminus \D) \cup \D^{-})  Y_{\z}((A\setminus \D) \cup \D^{-}) Z_{\z}(B,B)}{Z_{\z}((A\setminus \D) \cup \D^{-},B)^2 Y_{\z}(B)}}.
		\end{split}
	\end{equation}
	
	Finally, combining all of the above, we can rewrite Conjecture \ref{Andrade&Keating Ratios Conjecture} into a set form as given in Conjecture \ref{Our Ratios Conjecture2.}.
	
	%
	
	
	\section{Differentiating Ratios of Quadratic Dirichlet $L$-functions}\label{DifRatios}
	
	In this section we compute a formula for
	
	\begin{equation}\label{dR_{A,B}}
		\begin{split}
			J_{DL} (A) &= \sum_{D\in \h} \prod_{\alpha\in A} \frac{L'}{L} \left(\frac{1}{2}+\alpha,\x_D\right) \\
			&= \prod_{\alpha\in A} \frac{\partial}{\partial\alpha} R_{DL}(A,B)\Big\vert_{B=A}.
		\end{split}
	\end{equation}{\color{white}'}
	In other words, we prove theorem \ref{J^*_{DL}.}.
	
	The main steps to compute the above are by pulling the differentiation inside the sum over subsets $\D\subseteq A$ in equation (\ref{df}) and then separating the differentiations in cases either $\alpha\in \D$ or $\alpha \notin \D$. Notice that the differentiation by $\alpha\in \D$ is straightforward. However, to complete the calculations, we need to use the logarithmic differentiation method, which is given in Lemma \ref{H} below. 
	
	We can see from equation (\ref{dR_{A,B}}) that the size of the sets $A$ and $B$ must be equal, therefore let $A=\{\alpha_1,\cdots,\alpha_K\}$ and $B=\{\beta_1,\cdots,\beta_K\}$ where $\alpha_i$ and $\beta_i$ have positive real part. For the substitution, we can think of it as substituting $\alpha_i$ for $\beta_i$ for each $1\leqslant i\leqslant K$. Recalling that we can write $A=\D\cup (A\backslash \D)$, therefore the summation is running over all the possible partitions of $A$ which is partitioned into two sets, including $A=\emptyset \cup A$. Note that we are considering separately differentiating by the variables within $\D$ and the other variables within $A\backslash \D$. 
	
	We start by expanding the term within the square root in equation (\ref{Y_{DL}1}) by letting $B=B_\D\cup B_{(A\backslash \D)}$ where $B_\D$ is the set that will eventually be substituted by $\D$.
	
	From equation (\ref{Zz1}) we can easily see that $Z_{\z}(A,B)=Z_{\z}(B,A)$ and $Z_{\z}(A\cup B,C)=Z_{\z}(A,C)Z_{\z}(B,C)$ when $A,B$ are disjoint. These let us write
	
	\begin{equation*}
		\begin{split}
			&\frac{Z_{\z}((A\setminus \D) \cup \D^{-},(A\setminus \D) \cup \D^{-})  Y_{\z}((A\setminus \D) \cup \D^{-}) Z_{\z}(B,B)}{Z_{\z}((A\setminus \D) \cup \D^{-},B)^2 Y_{\z}(B)} \\
			&\ = Z_{\z}((A\setminus \D) ,(A\setminus \D)) Z_{\z}((A\setminus \D),\D^{-}) Z_{\z}(\D^{-},(A\setminus \D)) Z_{\z}(\D^{-},\D^{-})\\
			&\ \ \ \times \frac{Z_{\z}(B_\D,B_\D) Z_{\z}(B_\D, B_{A\backslash \D}) Z_{\z}( B_{A\backslash \D},B_\D) Z_{\z}( B_{A\backslash \D}, B_{A\backslash \D}) }{ Z_{\z}((A\setminus \D) ,B_\D)^2 Z_{\z}((A\setminus \D) , B_{A\backslash \D})^2 Z_{\z}( \D^{-},B_\D)^2 Z_{\z}( \D^{-}, B_{A\backslash \D})^2}\\
			& \ \ \ \times \frac{Y_{\z}((A\setminus \D))  Y_{\z}(\D^{-})}{ Y_{\z}(B_\D)  Y_{\z}(B_{A\backslash \D})}.
		\end{split}
	\end{equation*} {\color{white}'}
	
	Making the substitution $B=A,$ we get
	
	\begin{equation*}
		\begin{split}
			&\frac{Z_{\z}((A\setminus \D) \cup \D^{-},(A\setminus \D) \cup \D^{-})  Y_{\z}((A\setminus \D) \cup \D^{-}) Z_{\z}(B,B)}{Z_{\z}((A\setminus \D) \cup \D^{-},B)^2 Y_{\z}(B)} \\
			&= \frac{Z_{\z}(\D^{-},\D^{-}) Z_{\z}(\D,\D) Y_{\z}(\D^{-})}{Z_{\z}(\D^{-},D)^2 Y_{\z}(\D)},
		\end{split}
	\end{equation*}
	which is zero unless $\D$ is empty since we have $Z_{\z}(\D^{-}, \D)$ in the denominator and $\z(s)$ has a pole at $s=1$.
	
	When we differentiate (\ref{dR_{A,B}})  by a given $\alpha\in \D$, the expression $\z(\beta-\alpha),$ which comes from $Z_{\z}(\D^{-},B_\D)$ term in the denominator, must be differentiated. Otherwise, when the substitution occurs the whole expression is equal to zero. So when differentiating $\frac{1}{Z_{\z}(\D^{-},B_\D)}$ for every $\alpha\in \D$ the only term that does not become zero on substitution is the one where each $\frac{1}{\z(1+\beta_\alpha-\alpha)}$ is differentiated by $\alpha$. Here $\beta_\alpha$ is the variable that will be substituted by $\alpha$ when $A$ is substituted for $B$.
	
	Note that
	
	\begin{equation*}
		\begin{split}
			\frac{d}{d\alpha} \frac{1}{\z(1+\beta-\alpha)} 
			&= \frac{d}{d\alpha} \left(1-q^{\alpha-\beta}\right)\\
			&=- q^{\alpha-\beta}\log q.
		\end{split}
	\end{equation*}
	Therefore, it is easy to check that
	
	\begin{equation*}
		\begin{split}
			\prod_{\alpha\in \D} \frac{d}{d\alpha}\frac{1}{Z_{\z}(\D^{-},B_\D)}\Big\vert_{B_\D=\D}   = \frac{( -\log q)^{\#\D}}{Z^{\dagger}(\D^{-},\D)} ,
		\end{split}
	\end{equation*}
	where
	\begin{equation*}
		Z^{\dagger}(A,B) = \prod_{\substack{\alpha\in A \\ \beta\in B\\\alpha+\beta\neq0}} \z(1+\alpha+\beta),
	\end{equation*}
	and the $\dagger$ indicates that a factor is omitted if its argument is zero $(\alpha +\beta =0)$, since we only include the terms of $Z$ that are not equal to $\z(1)$. Hence
	
	\begin{equation*}
		\begin{split}
			&J_{DL}(A) \\
			&= \sum_{D\in\h} \sum_{\D\subseteq A} \left|D\right|^{- \sum_{\delta\in \D}\delta } \prod_{\delta\in \D} \xx\left(\tfrac{1}{2}+\delta\right)   \left(- \log q\right)^{\#\D} \sqrt{\frac{ Z_{\z}(\D^{-},\D^{-}) Z_{\z}(\D,\D) Y_{\z}(\D^{-})  }{  Z_{\z}^\dag( \D^{-},\D)^2  Y_{\z}(\D)}} \\
			& \ \ \times   \prod_{\alpha\in A\setminus \D}\frac{\partial}{\partial\alpha} \Bigg( \sqrt{\frac{Z_{\z}(A\setminus \D,A\setminus \D) Z_{\z}(A\setminus \D,\D^{-})^2 Z_{\z}(B_\D, B_{A\backslash \D})^2 }{Z_{\z}(A\setminus \D ,B_\D)^2 Z_{\z}(A\setminus \D , B_{A\backslash \D})^2 Z_{\z}( \D^{-}, B_{A\backslash \D})^2 }}\\
			& \ \ \ \times \sqrt{\frac{ Z_{\z}( B_{A\backslash \D}, B_{A\backslash \D}) Y_{\z}(A\setminus \D)}{ Y_{\z}(B_{A\backslash \D})}}  A_{DL}(A,B,\D)\Bigg)\Bigg\vert_{B=A}+ o\left(|D|\right).
		\end{split}
	\end{equation*}
	
	We now use the following lemma, which is quoted from Mason's PhD thesis \cite{mason1}, to carry out the remaining differentiations.

	\begin{lemma}[Lemma 4.4, \cite{mason1}]\label{H}
		Let $H$ be a differentiable function of $w\in W$. Then
		\begin{equation*}
			\left(\prod_{w\in W} \frac{\partial}{\partial w}\right) e^H = e^H \sum_{W=W_1+\cdots + W_r} H(W_1)\cdots H(W_r)
		\end{equation*}
		where
		
		\begin{equation*}
			H(W)=\left(\prod_{w\in W} \frac{\partial}{\partial w}\right) H,
		\end{equation*}
		the sum over $W_i$ is a sum over all distinct set partitions of $A\setminus \D$.
	\end{lemma}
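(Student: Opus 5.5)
We argue by induction on $|W|$. This is the multivariate Fa\`a di Bruno formula for the exponential, stated in set-partition form. Throughout we assume $H$ is smooth enough in the variables $w\in W$ that the mixed partials commute. Then $\prod_{w\in W}\partial/\partial w$ is well defined independently of the order, and we may add the variables one at a time. For the statement to make sense the $w$'s must be distinct, so the partition is of the index set $W$; the sum runs over all unordered set partitions $W=W_1+\cdots+W_r$ into nonempty blocks. In the application $W=A\setminus\D$.

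\emph{Base cases.} For $W=\emptyset$ the empty product of derivatives is the identity. The only partition of $\emptyset$ is the empty one, whose product of block terms is $1$. Both sides therefore equal $e^H$. For $W=\{w\}$ we have $\partial_w e^H=e^H\,\partial_w H=e^H H(\{w\})$, which matches the single partition.

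\emph{Inductive step.} Suppose the identity holds for $W$, and let $v\notin W$. Applying $\partial_v$ to both sides and using the product rule gives
\begin{equation*}
\partial_v\Big(e^H\sum_{\pi}\prod_{B\in\pi}H(B)\Big)=e^H\sum_{\pi}\Big(H(\{v\})\prod_{B\in\pi}H(B)+\sum_{B_0\in\pi}H(B_0\cup\{v\})\prod_{B\in\pi,\,B\neq B_0}H(B)\Big),
\end{equation*}
where $\pi$ runs over the partitions of $W$. Here we used $\partial_v H(B)=H(B\cup\{v\})$, which holds by the definition of $H(\cdot)$ and the commutativity of partials.

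The key combinatorial point is a bijection. Each partition $\pi'$ of $W\cup\{v\}$ arises exactly once from a pair consisting of a partition $\pi$ of $W$ and a choice of where $v$ goes. Either $v$ forms its own block $\{v\}$, or $v$ is adjoined to exactly one existing block $B_0$. The inverse map deletes $v$ from $\pi'$ and discards the block if it becomes empty. Hence the right-hand side equals $e^H\sum_{\pi'}\prod_{B\in\pi'}H(B)$, which completes the induction.

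The only step needing care is this bijection. It guarantees that each set partition is counted once, with no multiplicities, which is why the formula has no combinatorial coefficients. Everything else is the product rule.
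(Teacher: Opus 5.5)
Your proof is correct: the product rule, together with the bijection between set partitions of $W\cup\{v\}$ and pairs (a partition of $W$, plus a choice of putting $v$ in a new singleton block or in exactly one existing block), is the standard induction proof of this set-partition form of Fa\`a di Bruno's formula for $e^H$, and your assumption that mixed partials commute is harmless because in the application $H=\log H^{A,B}_\D$ is analytic in the shifts. The paper gives no proof of its own, since it simply quotes the lemma from Mason's thesis, and you were right to read the sum as running over partitions of $W$, with the phrase ``partitions of $A\setminus\D$'' in the statement referring only to the case $W=A\setminus\D$ where the lemma is applied.
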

	
	Let
	
	\begin{equation}\label{H^{A_{DL},B}_D}
		\begin{split}
			H & = \log\left(H_\D^{A,B}\right) \\
			&= \sum_{\substack{\alpha\in A\setminus \D \\ \beta\in \D}} \log\left(\z(1+\alpha-\beta)\right) + \frac{1}{2} \sum_{\substack{\alpha\in A\setminus \D \\ \beta\in A\setminus \D}} \log\left(\z(1+\alpha+\beta)\right)\\
			&\ \ \ + \frac{1}{2} \sum_{\substack{\alpha\in A\setminus \D }} \log\left(\z(1+2\alpha)\right)  - \sum_{\substack{\alpha\in A\setminus \D \\ \beta\in B_\D}} \log\left(\z(1+\alpha+\beta)\right)\\
			&\ \ \   - \sum_{\substack{\alpha\in A\setminus \D \\ \beta\in B_{A\backslash \D}}} \log\left(\z(1+\alpha+\beta)\right) + \log\left( A_{DL}(A,B,\D)\right).
		\end{split}
	\end{equation}
	Then, by applying Lemma \ref{H} we have
	
	\begin{equation*}
		\begin{split}
			& \sqrt{\frac{Z_{\z}(B_\D,B_{A\setminus \D})^2 Z_{\z}(B_{A\setminus \D},B_{A\setminus \D})} {Z_{\z}(\D^{-},B_{A\setminus \D})^2 Y_{\z}(B_{A\setminus \D})}}  \prod_{\alpha \in A\setminus \D} \frac{\partial}{\partial\alpha}\left(e^{H}\right) \Bigg\vert_{\substack{B_{A \setminus \D = A \setminus \D} \\ B_\D=\D}} \\
			&= \sqrt{\frac{Z_{\z}(B_\D,B_{A\setminus \D})^2 Z_{\z}(B_{A\setminus \D},B_{A\setminus \D})} {Z_{\z}(\D^{-},B_{A\setminus \D})^2 Y_{\z}(B_{A\setminus \D})}} \ H_\D^{A,B}  \sum_{A\setminus \D=W_1+\cdots+W_R} \prod_{\alpha \in W_r} H(W_r) \Bigg\vert_{\substack{B_{A \setminus \D = A \setminus \D} \\ B_\D=\D}} \\
			&= A_{DL}(A,B,\D) \sum_{A\setminus \D=W_1+\cdots+W_R} \widetilde{H}_\D^{A,B}(W_r) \Bigg\vert_{\substack{B= A}}.
		\end{split}
	\end{equation*}
	By letting $B_{A\setminus \D}=A\setminus \D$ and $B_\D=\D $ we have
	
	\begin{equation}\label{HB=D}
		\begin{split}
			&\Bigg(\sqrt{\frac{Z_{\z}(B_\D,B_{A\setminus \D})^2 Z_{\z}(B_{A\setminus \D},B_{A\setminus \D})} {Z_{\z}(\D^{-},B_{A\setminus \D})^2 Y_{\z}(B_{A\setminus \D})}} \ H_\D^{A,B}   \Bigg)\Bigg\vert_{\substack{B_{A \setminus \D = A \setminus \D} \\ B_\D=\D}} \\
			&\\
			& =  \left(A_{DL}(A,B,\D)\right)\Bigg\vert_{\substack{B=A}} \sqrt{\frac{Z_{\z}(D,A\setminus \D)^2 Z_{\z}(A\setminus \D,A\setminus \D)} {Z_{\z}(\D^{-},A\setminus \D)^2 Y_{\z}(A\setminus \D)}} \\
			& \ \ \ \times \sqrt{\frac{Z_{\z}(A\setminus \D,\D^{-})^2  Z_{\z}(A\setminus \D,A\setminus \D) Y_{\z}(A\setminus \D)}{ Z_{\z}(A\setminus \D ,\D)^2 Z_{\z}(A\setminus \D , A\backslash \D)^2  }}   \\
			&= \left(A_{DL}(A,B,\D)\right)\Bigg\vert_{\substack{B=A}} .
		\end{split}
	\end{equation}
	Before we are in a position to complete the proof of the theorem we need to compute these $\widetilde{H}_\D^{A,B}(w_r)$ terms and $A_{DL}(A,B,\D)\big\vert_{B=A}$. These are computed below.
	
	\begin{lemma}\label{A_{DL}(A,b,D)}
		Let $A_{DL}(A,B,\D)$ be defined as in (\ref{A_{DL}}), then we have
		
		\begin{equation*}
			A_{DL}(A,B,\D)\big\vert_{B=A}=A_{DL}(\D,\D,\D),
		\end{equation*}
		and this is an analytic function.
	\end{lemma}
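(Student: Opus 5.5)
We prove the lemma directly from the definition (\ref{A_{DL}}). We compute the Euler factor and the prefactor $Y_{DL}(A,B,\D)^{-1}$ separately at $B=A$, and then observe that every dependence on $A\setminus\D$ cancels.

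\textbf{Euler factor.} Put $B=A=(A\setminus\D)\cup\D$. Since $V_\pm$ is multiplicative over disjoint unions, $V_\pm(B)=V_\pm(A\setminus\D)V_\pm(\D)$ and $V_\pm((A\setminus\D)\cup\D^-)=V_\pm(A\setminus\D)V_\pm(\D^-)$. The factor $V_\pm(A\setminus\D)$ therefore cancels in both ratios, and the local factor at $P$ becomes
\begin{equation*}
\left(1+\frac{1}{|P|}\right)^{-1}\left(\frac{V_-(\D)}{2V_-(\D^-)}+\frac{V_+(\D)}{2V_+(\D^-)}+\frac{1}{|P|}\right).
\end{equation*}
This is exactly the Euler factor of $A_{DL}(\D,\D,\D)$: there $A=\D$, the subset is all of $A$, so $(A\setminus\D)\cup\D^-=\D^-$, and $B=\D$.

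\textbf{Prefactor.} Next we deal with $Y_{DL}(A,B,\D)^{-1}$ at $B=A$. The expansion already carried out above gives
\begin{equation*}
Y_{DL}(A,A,\D)^2=\frac{Z_{\z}(\D^-,\D^-)Z_{\z}(\D,\D)Y_{\z}(\D^-)}{Z_{\z}(\D^-,\D)^2Y_{\z}(\D)},
\end{equation*}
which is precisely $Y_{DL}(\D,\D,\D)^2$. Hence, as formal expressions, $A_{DL}(A,B,\D)|_{B=A}=A_{DL}(\D,\D,\D)$.

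\textbf{Main obstacle: zeros of $Y_{DL}$.} When $\D\neq\emptyset$, the factor $Z_{\z}(\D^-,\D)$ contains $\z(1)$, so $Y_{DL}$ vanishes and its inverse is singular. The identity must therefore be understood as equality of analytic functions, not as a product of values. To handle this, we rewrite $A_{DL}$ as a single Euler product in which $Y_{DL}^{-1}$ is absorbed prime by prime. Using $\z(1+s)=\prod_P(1-|P|^{-1-s})^{-1}$, we get
\begin{equation*}
A_{DL}(A,B,\D)=\prod_P \left(\prod(\text{local factors of } Y_{DL}^{-1})\right)\left(1+\frac{1}{|P|}\right)^{-1}\Big(\cdots\Big).
\end{equation*}
The $\z$-factors were introduced precisely to cancel the divergent terms $|P|^{-1-\alpha-\beta}$, $|P|^{-1-2\alpha}$ and similar in the expansion of the bracket. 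So each local factor is $1+O(|P|^{-1-\eta})$ uniformly for shifts in small discs, provided the real parts satisfy $|\RR|<\tfrac14$ as in (\ref{comditions}). The product then converges absolutely and uniformly there, so it is analytic in all variables, with no singularity on $\alpha+\beta=0$. Once analyticity is established, the specialization $B=A$ may be performed factor by factor. At each prime, the same cancellation of $V_\pm(A\setminus\D)$ and of the corresponding local zeta factors applies, because the local factor of $Z_{\z}$ is multiplicative in exactly the same way as the global one. This yields $A_{DL}(\D,\D,\D)$, which is analytic by the same convergence argument.

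The step needing care is the uniform bound $1+O(|P|^{-1-\eta})$. To prove it, we expand the bracket to second order in $|P|^{-1/2}$, exactly as in the derivation of (\ref{co}), and check that the quadratic terms match the expansion of the zeta factors term by term.
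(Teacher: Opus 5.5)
Your proof is correct and follows the same route as the paper. You set $B=A=(A\setminus\mathcal{D})\cup\mathcal{D}$, use multiplicativity of $V_\pm$ and $Z_{\zeta_{\mathbb{A}}}$ over unions to cancel every factor involving $A\setminus\mathcal{D}$ (both in the Euler factor and inside $Y_{DL}$), and recognise $A_{DL}(\mathcal{D},\mathcal{D},\mathcal{D})$; your extra step of absorbing $Y_{DL}^{-1}$ into the Euler product prime by prime goes beyond the paper, which only asserts analyticity even though $Y_{DL}(\mathcal{D},\mathcal{D},\mathcal{D})^{-1}$ formally contains $\zeta_{\mathbb{A}}(1)$ for $\mathcal{D}\neq\emptyset$, so your version turns the paper's formal cancellation into a genuine identity of analytic functions.
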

	\begin{proof}
	Substituting the set $B$ with the set $A$ and using the fact that $Z(A\cup C,B)=Z(A,B)Z(C,B)$ , $V_{\pm}(A\cup C)=V_{\pm}(A) V_{\pm}(C)$ whenever $A$ and $C$ are distinct sets, and that $Z(A,B)=Z(B,A)$, we have
		\begin{equation*}
			\begin{split}
				&A_{DL}(A,B,\D)\Big\vert_{B=A} \\
				&=  \sqrt{\frac{Z_{\z}(A\setminus \D \cup \D^{-},\D\cup A\setminus \D)^2 Y_{\z}(\D\cup A\setminus \D)}{Z_{\z}((A\setminus \D) \cup \D^{-},(A\setminus \D) \cup \D^{-})  Y_{\z}((A\setminus \D) \cup \D^{-}) Z_{\z}(\D\cup A\setminus \D,\D\cup A\setminus \D)}}\\
				& \ \ \ \times \prod_{\substack{P \text{ prime}}} \left(1+\frac{1}{|P|}\right)^{-1}\left( \frac{V_{-}(\D) V_{-}( A\setminus \D)}{2V_{-}(A\setminus \D) V_{-} (\D^{-})} +  \frac{V_{+}(\D) V_{+}(A\setminus \D)}{2V_{+}(A\setminus \D) V_{+}( \D^{-})}+\frac{1}{|P|}\right)\\
				&=  \sqrt{\frac{Z_{\z}(\D^{-} ,\D)^2 Y_{\z}(\D)}{Z_{\z}(\D^{-},\D^{-})  Y_{\z}(\D^{-}) Z_{\z}(\D, \D)}} \prod_{\substack{P \text{ prime}}} \left(1+\frac{1}{|P|}\right)^{-1}\left( \frac{V_{-}(\D)}{2 V_{-} (\D^{-})} +  \frac{V_{+}(\D)}{2 V_{+}( \D^{-})}+\frac{1}{|P|}\right)\\
				&= A_{DL}(\D,\D,\D),
			\end{split}
		\end{equation*}
		which is an analytic function.
	\end{proof}
	
	Now from equation (\ref{H^{A_{DL},B}_D}), we can easily show that, for $W_r\subset A\setminus \D$,
	
	\begin{equation}\label{widetilde{H}^{A,B}_D}
		\begin{split}
			\widetilde{H}^{A,B}_\D (W_r)&= \prod_{\alpha\in W_r}\frac{\partial}{\partial \alpha} H^{A,B}_\D\\
			&= H^{A,B}_\D(W_r)+A^{A,B}_\D(W_r)
		\end{split}
	\end{equation}
	with
	
	\begin{equation}\label{{H}^{A,B}_D}
		\begin{split}
			H^{A,B}_\D(W_r) &= \begin{cases}
				\underset{\beta\in D}{\sum} \left(\frac{\z'}{\z}(1+\gamma-\beta) - \frac{\z'}{\z}(1+\gamma+\beta)\right)+\frac{\z'}{\z}(1+2\gamma), & W_r=\{\gamma\}\\
				\left(\frac{\z'}{\z}\right)' (1+\gamma_1+\gamma_2), & W_r=\{\gamma_1,\gamma_2\}\\
				0, & |W_r|\geqslant 3
			\end{cases}
		\end{split}
	\end{equation}
	and therefore
	
	\begin{equation*}
		A^{A,B}_\D(W_r) = \prod_{\alpha\in W_r} \frac{\partial}{\partial \alpha} \log A_{DL} (A,B,\D) \Bigg\vert_{B=A}.
	\end{equation*}
	
	Hence, Theorem \ref{J^*_{DL}.} follows.
	%
	%
	%
	
	
	\section{Residue Theorem}\label{Residue Theorem}
	
	In this section we locate the poles of $J^*_{DL}(A)$ and compute the residue at these poles. Recalling theorem \ref{J^*_{DL}.}, we have that
	
	\begin{equation}\label{J^*_{DL}1}
		\begin{split}
			J^*_{DL}(A) = & \sum_{D\in\h} \sum_{\D\subseteq A} \left|D\right|^{- \sum_{\delta\in \D}\delta } \prod_{\delta\in \D} \xx\left(\tfrac{1}{2}+\delta\right)  \left( -\log q\right)^{\#D}\\
			& \times  \sqrt{\frac{	 Z_{\z}(\D^{-},\D^{-}) Z_{\z}(\D,\D) Y_{\z}(\D^{-})  }{  Z_{\z}^\dag( \D^{-},D)^2  Y_{\z}(\D)}} \\
			& \ \ \ \times A_{DL}(\D,\D,\D) \sum_{A\setminus \D = W_1+\cdots+W_R} \prod_{r=1}^R \widetilde{H}^{A,B}_\D(W_r).
		\end{split}
	\end{equation}
Since $\z(s)$ has a pole when $s=1$, the only possible poles for $J^*_{DL}(A)$ are coming either from $Y_{\z}(A)$ or $Z_{\z}(A,A)$ and that happens when $\alpha=0$ or $\alpha=-\beta$ where $\alpha,\beta\in A$. We will compute the residues at these poles. In the following lemmas, we write the series expansions around these poles for $A^{A,B}_\D(W_r), H^{A,B}_\D(W_r)$ and $A_{DL}(\D,\D,\D)$. Note that the proofs are omitted since they follow from similar calculations carried out in number field setting in \cite{mason1} and \cite{mason&Snaith}. For the function field calculations, similar lemmas were proved in \cite{shams}.
	
	\begin{lemma}\label{expansion A_D^{A,B}}
		Let $\alpha^*,\beta^*\in A,$ and define $\D'= \D \cap (A/\{\alpha^*,\beta^*\}).$ Then about $\alpha^*=-\beta^*$, we have
		
		\begin{equation*}
			\begin{split}
				A^{A,B}_\D(W_r) &= A^{A,B}_{\D'}(W_r) -(\alpha^*+\beta^*) \Big( A^{A,B}_{\D'}\left(W_r+\{\beta^*\}\right) \\
				& \ \ \ +A^{A,B}_{\D'}(W_r+\{-\beta^*\}) \Big) +O\left(\left(\alpha^*+\beta^*\right)^2\right).
			\end{split}
		\end{equation*}
	\end{lemma}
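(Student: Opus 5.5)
The plan is to Taylor expand $\log A_{DL}(A,B,\D)$, with $B=A$ substituted after differentiation, in the single variable $\alpha^*$ about $\alpha^*=-\beta^*$, and then differentiate term by term. By Lemma \ref{A_{DL}(A,b,D)}, the factor $A_{DL}(A,B,\D)\vert_{B=A}$ depends only on $\D$. What matters before the substitution is how $\alpha^*$ enters through the Euler factors $V_{\pm}((A\setminus\D)\cup\D^-)$, $V_\pm(B)$ and the zeta factors in $Y_{DL}^{-1}$.

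The first step is to treat $A_{DL}$ as an analytic function of all the shifts in small disks around $0$. This is justified by the restrictions (\ref{comditions}): the Euler product converges absolutely, and $Y_{DL}^{-1}$ exactly cancels the divergent terms with $\sum_k a_k+\#G=2$. It follows that $\log A_{DL}$ and all of its mixed partial derivatives $A^{A,B}_\D(W_r)$ are analytic near $\alpha^*=-\beta^*$, so they have an ordinary Taylor expansion with an $O((\alpha^*+\beta^*)^2)$ remainder.

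The second step identifies the zeroth-order term. At $\alpha^*=-\beta^*$, the local factors of $\alpha^*$ and $\beta^*$ pair off. Using the multiplicativity $V_\pm(X\cup Y)=V_\pm(X)V_\pm(Y)$ and $Z_{\z}(X\cup Y,C)=Z_{\z}(X,C)Z_{\z}(Y,C)$, as in the proof of Lemma \ref{A_{DL}(A,b,D)}, the contributions of $\alpha^*$ and $\beta^*$ cancel in each of the cases $\alpha^*,\beta^*\notin\D$, exactly one of them in $\D$, or both in $\D$. For example, if $\alpha^*\in\D$ then $-\alpha^*=\beta^*$ appears in $\D^-$. This cancellation leaves the same function computed with $\D'$ in place of $\D$, which gives $A^{A,B}_{\D'}(W_r)$.

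The third step computes the linear coefficient, which is $\partial_{\alpha^*}A^{A,B}_\D(W_r)$ evaluated at $\alpha^*=-\beta^*$. Since differentiation in $\alpha^*$ commutes with $\prod_{\alpha\in W_r}\partial_\alpha$, this coefficient is $A$ applied to the enlarged set $W_r\cup\{\alpha^*\}$. The $\alpha^*$-dependence splits into the part coming from $\pm\alpha^*$ and the part coming from its partner $B$-variable. Evaluated at $\alpha^*=-\beta^*$, these two parts are identified with derivative contributions attached to $+\beta^*$ and to $-\beta^*$ respectively, and they give $-(A^{A,B}_{\D'}(W_r+\{\beta^*\})+A^{A,B}_{\D'}(W_r+\{-\beta^*\}))$. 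The overall sign comes from $\partial_{\alpha^*}=-\partial_{(-\alpha^*)}$.

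I expect the main obstacle to be this sign and bookkeeping across the four membership cases of $\alpha^*,\beta^*$ in $\D$. One also has to check that the zeta factors in $Y_{DL}^{-1}$, which individually have poles at $\alpha^*=-\beta^*$, combine with the Euler product to give an analytic function, so that no extra singular terms appear. I would handle this by working at the level of a single local factor at $P$, where everything is a rational function of $|P|^{-\alpha}$, and then summing over $P$ using absolute convergence.
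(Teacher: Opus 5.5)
\textbf{Verdict: genuine gap.} The paper gives no proof of this lemma; it defers to Mason--Snaith. It uses the lemma only in Case~4 of Theorem~\ref{Residue}, that is, for $\alpha^*,\beta^*\in\D$.

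Your Step~1 is fine: $\log A_{DL}$ is analytic in independent $A$- and $B$-shifts, which you can check prime by prime. Step~2 is wrong as stated. The cancellation at $\alpha^*=-\beta^*$ does \emph{not} happen in the other membership cases, and the lemma itself is false there.
\begin{itemize}
\item Suppose $\alpha^*,\beta^*\in A\setminus\D$ and neither lies in $W_r$. Then $\D'=\D$ and the left-hand side does not depend on $\alpha^*$ at all, yet the claimed linear coefficient is not zero. For example, with $\D=W_r=\emptyset$ the coefficient is $A_D'(\beta^*;\beta^*)+A_D'(-\beta^*;-\beta^*)$ in the notation of (\ref{AD}), which is positive for small real $\beta^*$.
\item Suppose $\alpha^*\in\D$ and $\beta^*\in A\setminus(\D\cup W_r)$. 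After setting $B=A$, only $\beta^*$ and its $B$-partner cancel. The entries $-\alpha^*=\beta^*$ (in $\D^-$) and $\alpha^*=-\beta^*$ (in $B_\D$) survive, so the constant term is $A_{\D'\cup\{-\beta^*\}}(W_r)$, not $A_{\D'}(W_r)$.
\end{itemize}
So you must restrict to $\alpha^*,\beta^*\in\D$.

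In that case the real content is Step~3, and your argument does not go through there. Because $\alpha^*\in\D$, it enters $\log A_{DL}$ twice:
\begin{itemize}
\item as $s_1=-\alpha^*$ in $S=(A\setminus\D)\cup\D^-$;
\item as $t_1=\alpha^*$ in $T=B$, via $B_\D=\D$.
\end{itemize}
So $\partial_{\alpha^*}A_\D(W_r)$ is not ``$A$ applied to $W_r\cup\{\alpha^*\}$''; that derivative hits only an $A$-slot before $B=A$ is imposed. By the chain rule it equals $-\partial_{s_1}+\partial_{t_1}$. Your sign rule $\partial_{\alpha^*}=-\partial_{(-\alpha^*)}$ accounts only for the first piece; the $B$-slot piece enters with a plus sign.

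The missing idea is a diagonal invariance. Write $\log A_{DL}=\Psi(S;T)$. From the local factors of $Y_{DL}^{-1}$ and of $V_\pm(T)/V_\pm(S)$ one checks
\[
\Psi(S\cup\{s\};T\cup\{s\})=\Psi(S;T)\ \text{ identically, hence }\ (\partial_s+\partial_t)\Psi\big\vert_{s=t}=0 .
\]
Let $s_2=-\beta^*\in S$ and $t_2=\beta^*\in T$ be the other two entries. At $\alpha^*=-\beta^*$ both pairs $(s_1,t_2)$ and $(s_2,t_1)$ coincide.
\begin{itemize}
\item Cancelling both pairs gives the constant term $A_{\D'}(W_r)$.
\item Evaluate $\partial_{s_1}\Psi$ after cancelling $(s_2,t_1)$. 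This gives the $A$-slot derivative at the diagonal pair $(\beta^*,\beta^*)$, namely $A_{\D'}(W_r+\{\beta^*\})$.
\item Evaluate $\partial_{t_1}\Psi$ after cancelling $(s_1,t_2)$. This gives a $B$-slot derivative at $(-\beta^*,-\beta^*)$. By the diagonal identity it equals $-A_{\D'}(W_r+\{-\beta^*\})$.
\end{itemize}
The $W_r$-derivatives act on other $A$-slots and commute with all of this. Hence
\[
\partial_{\alpha^*}A_\D(W_r)\big\vert_{\alpha^*=-\beta^*}=-A_{\D'}(W_r+\{\beta^*\})-A_{\D'}(W_r+\{-\beta^*\}),
\]
which is the claimed coefficient. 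Without the diagonal identity, the second term is either left unidentified or comes out with the wrong sign.
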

	
	\begin{lemma}\label{expansion H_D^{A,B}}
		Let $\alpha^*,\beta^*\in A, W_r\subset A\backslash \D$ and $\D'=\D\cap (A\backslash \{\alpha^*,\beta^*\})$. Then about $\alpha^*=-\beta^*$, we have
		
		\begin{equation*}
			\begin{split}
				H_\D^{A,B}(W_r) &= H_{\D'}^{A,B}(W_r) - (\alpha^*+\beta^*) \Big(H_{\D'}^{A,B}(W_r+\{\beta^*\}) \\
				&\ \ \ + H_{\D'}^{A,B}(W_r+\{-\beta^*\}) \Big) +O\left((\alpha^*+\beta^*)^2\right).
			\end{split}
		\end{equation*}
	\end{lemma}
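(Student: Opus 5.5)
We expand $H^{A,B}_\D(W_r)$ around $\alpha^*=-\beta^*$ using its explicit form in (\ref{{H}^{A,B}_D}), splitting into cases according to how $\alpha^*$ and $\beta^*$ sit relative to $\D$ and $W_r$. The structural fact we use is that $\frac{\z'}{\z}(s)=-\frac{q^{1-s}\log q}{1-q^{1-s}}$ is an explicit meromorphic function of $q^{-s}$. Its only pole is at $s=1$ (mod $2\pi i/\log q$), so every term that does not involve the combination $1+\alpha^*+\beta^*$ or $1+\alpha^*-\beta^*$ is analytic near $\alpha^*=-\beta^*$ and can be Taylor expanded to first order.

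\textbf{Case 1: neither $\alpha^*$ nor $\beta^*$ lies in $\D$.} Then $\D'=\D$. For $W_r=\{\gamma\}$ or $W_r=\{\gamma_1,\gamma_2\}$ with $\gamma,\gamma_i\ne\alpha^*,\beta^*$, the function $H$ does not depend on $\alpha^*$ at all. If $W_r$ contains $\alpha^*$, the dependence is only through $\gamma=\alpha^*$. In that case we write $\alpha^*=-\beta^*+(\alpha^*+\beta^*)$ and Taylor expand. The first-order coefficient is a derivative of a $\frac{\z'}{\z}$ term in $\gamma$. This should match $H^{A,B}_{\D'}(W_r+\{\pm\beta^*\})$, read as the two-element formula $\left(\frac{\z'}{\z}\right)'(1+\gamma+\beta^*)$ and its reflection with $-\beta^*$, with the sign $-(\alpha^*+\beta^*)$ coming from the chain rule. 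Any coefficient of $W_r+\{\cdot\}$ of size $\ge3$ is zero by definition, which disposes of the remaining pieces.

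\textbf{Case 2: one or both of $\alpha^*,\beta^*$ lie in $\D$.} We split $\sum_{\beta\in\D}$ as $\sum_{\beta\in\D'}$ plus the contributions of $\alpha^*$ and/or $\beta^*$. When both are in $\D$, at $\alpha^*=-\beta^*$ their contributions
\[
\frac{\z'}{\z}(1+\gamma-\alpha^*)-\frac{\z'}{\z}(1+\gamma+\alpha^*)+\frac{\z'}{\z}(1+\gamma-\beta^*)-\frac{\z'}{\z}(1+\gamma+\beta^*)
\]
cancel pairwise. This gives the zeroth-order term $H^{A,B}_{\D'}(W_r)$. The linear term in $(\alpha^*+\beta^*)$ is a $\left(\frac{\z'}{\z}\right)'$ evaluated at $1+\gamma\pm\beta^*$. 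We identify it with $H^{A,B}_{\D'}(W_r+\{\beta^*\})+H^{A,B}_{\D'}(W_r+\{-\beta^*\})$ using the two-element case of (\ref{{H}^{A,B}_D}). When only one of them lies in $\D$, we must check that the matching pole structure is absorbed in the prefactor of (\ref{J^*_{DL}1}). For the expansion of $H$ itself, we simply Taylor expand the single extra term.

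\textbf{Main obstacle.} The bookkeeping of signs and of which shifted $\frac{\z'}{\z}$ terms combine in the mixed sub-case is routine but delicate. Care is needed at the point where $1+\gamma\pm\beta$ could hit the pole, that is, when $\gamma=\mp\beta^*$, since there $H^{A,B}_\D$ itself is singular. We would handle this by interpreting the expansion as an identity of meromorphic functions in the remaining variables. The verification would mirror Lemma \ref{expansion A_D^{A,B}} and the number field computation in \cite{mason1}, with $\zeta$ replaced by $\z$.
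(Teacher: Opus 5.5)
Your subcase with both $\alpha^*,\beta^*\in\D$ is correct, and it is all the lemma can assert. In that subcase $\D=\D'\cup\{\alpha^*,\beta^*\}$. For $|W_r|=2$ both sides agree identically, since the two-element formula does not involve $\D$ and $|W_r+\{\pm\beta^*\}|=3$. For $W_r=\{\gamma\}$ the four extra terms cancel at $\alpha^*=-\beta^*$. Their $\alpha^*$-derivative there is $-\left(\frac{\z'}{\z}\right)'(1+\gamma+\beta^*)-\left(\frac{\z'}{\z}\right)'(1+\gamma-\beta^*)$, which is exactly the stated linear term, sign included. This is the only configuration in which the lemma is used (Case 4 in the proof of Theorem \ref{Residue}). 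The paper gives no proof beyond deferring to Mason, Mason--Snaith and Shamesaldeen, where this direct Taylor expansion is what is meant.

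The gap is that you also try to verify the expansion when $\alpha^*$ or $\beta^*$ is not in $\D$. There the identity is false, and the matches you assert do not hold.

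\begin{itemize}
\item Your Case 1 with $W_r=\{\gamma\}$, $\gamma\notin\{\alpha^*,\beta^*\}$: you have $\D'=\D$, so the left side minus $H^{A,B}_{\D'}(W_r)$ vanishes identically. The right side still carries $-(\alpha^*+\beta^*)\bigl[\left(\frac{\z'}{\z}\right)'(1+\gamma+\beta^*)+\left(\frac{\z'}{\z}\right)'(1+\gamma-\beta^*)\bigr]$, which is not identically zero. Your remark about sets of size $\ge 3$ only covers $|W_r|=2$.
\item Your Case 1 with $W_r=\{\alpha^*\}$: the identification is impossible, because $H^{A,B}_{\D'}(\{\alpha^*,\beta^*\})=\left(\frac{\z'}{\z}\right)'(1+\alpha^*+\beta^*)$ has a double pole. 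The right side then behaves like $-1/(\alpha^*+\beta^*)$, while the left side is analytic there. For $W_r=\{\alpha^*,\beta^*\}$ the left side itself has a double pole.
\item The mixed subcase ($\alpha^*\in\D$, $\beta^*\notin\D$): the identity fails already at order zero, since $H^{A,B}_\D(\{\gamma\})-H^{A,B}_{\D'}(\{\gamma\})\to\frac{\z'}{\z}(1+\gamma+\beta^*)-\frac{\z'}{\z}(1+\gamma-\beta^*)\not\equiv 0$. For $W_r=\{\beta^*\}$ the left side has a simple pole. Saying the pole is ``absorbed in the prefactor'' does not prove the stated expansion.
\end{itemize}

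What is missing is to recognize that the statement carries the implicit hypothesis $\alpha^*,\beta^*\in\D$, so that $\D'=\D\setminus\{\alpha^*,\beta^*\}$ and $W_r\cap\{\alpha^*,\beta^*\}=\emptyset$. You should state that hypothesis and drop the other configurations. The residue theorem treats those separately in its Cases 1--3 without this lemma.
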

	
	\begin{lemma}\label{expansion A_{DL}}
		Let $\alpha^*,\beta^*\in A$ and $\D'=\D\cap (A\backslash \{\alpha^*,\beta^*\})$. Then about $\alpha^*=-\beta^*$
		
		\begin{equation*}
			\begin{split}
				A_{DL}(\D,\D,\D) = A_{DL}(\D',\D',\D')& \Big[ 1- (\alpha^*+\beta^*) \big(A_{\D'}^{A,B}(\{\beta^*\})\\
				&+ A_{\D'}^{A,B}(\{-\beta^*\})\big) + O\left((\alpha^*+\beta^*)^2\right)\Big],
			\end{split}
		\end{equation*}
		where $A_{DL}(A,B,\D)$ is defined in (\ref{A_{DL}1}).
	\end{lemma}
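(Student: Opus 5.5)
The plan is a first-order Taylor expansion in $\epsilon:=\alpha^*+\beta^*$, carried out separately on the two groups of factors of $A_{DL}(\D,\D,\D)$ and then recombined. By Lemma \ref{A_{DL}(A,b,D)}, $A_{DL}(\D,\D,\D)=A_{DL}(A,B,\D)|_{B=A}$. I would write it as the product of the Euler product over $P$ and the factor $Y_{DL}^{-1}$, where $Y_{DL}^{-1}$ is the square root of a ratio of $\z$-factors.

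First I split into cases according to how many of $\alpha^*,\beta^*$ lie in $\D$.

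\textbf{Case 1: neither lies in $\D$.} Then $\D=\D'$, and the expression is independent of $\alpha^*,\beta^*$. The correction terms vanish to the order claimed, because $A^{A,B}_{\D'}(\{\pm\beta^*\})$ enters only through the coefficient of $\epsilon$. I would check this against the convention used in Lemma \ref{expansion A_D^{A,B}}.

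\textbf{Case 2: exactly one lies in $\D$.} Say $\alpha^*\in\D$. In the $B=A$ specialization, the Euler factor contains the ratio $V_\pm(\{\alpha^*\})/V_\pm(\{-\alpha^*\})$. With $\alpha^*=-\beta^*+\epsilon$ this becomes $V_\pm(\{-\beta^*+\epsilon\})/V_\pm(\{\beta^*-\epsilon\})$. Likewise the $\z$-factors $\z(1+\alpha^*\pm\delta)$ and $\z(1+2\alpha^*)$ are evaluated at shifted points.

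\textbf{Case 3: both lie in $\D$.} The cancellations in the proof of Lemma \ref{A_{DL}(A,b,D)} are no longer exact. I would handle this by regrouping the factors.

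The key observation in every case is that at $\epsilon=0$ the pair $\{\alpha^*,\beta^*\}$ becomes $\{-\beta^*,\beta^*\}$. For these paired factors, $V_\pm(\{\beta^*\})V_\pm(\{-\beta^*\})$ appears symmetrically in numerator and denominator of each local factor. The zeta quotients $Z_\z$, $Y_\z$ involving $\pm\beta^*$ likewise cancel in $Y_{DL}^{-1}$. What remains is exactly $A_{DL}(\D',\D',\D')$, with the understanding (as in the number field case of \cite{mason&Snaith}) that the singular $\z(1+\alpha^*+\beta^*)$ factors are exactly those stripped out by the $\dagger$ convention and the prefactor in (\ref{J^*_{DL}1}). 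So the zeroth-order identity is $A_{DL}(\D,\D,\D)|_{\epsilon=0}=A_{DL}(\D',\D',\D')$.

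For the first-order term I write $A_{DL}=\exp(\log A_{DL})$. I treat $\log A_{DL}(A,B,\D)$ as a function of $\alpha^*$ and of the $B$-variable $\beta_{\alpha^*}$ that is later set equal to $\alpha^*$. Moving $\alpha^*$ by $\epsilon$ away from $-\beta^*$ therefore changes two arguments: one equal to $\beta^*$ (through the $A$- or $\D^-$-slot) and one equal to $-\beta^*$. By the chain rule the linear coefficient is $-\bigl(\partial_{\beta^*}\log A + \partial_{-\beta^*}\log A\bigr)$, evaluated at the reduced configuration. By the definition of $A^{A,B}_{\D'}(W)$ as a logarithmic derivative of $A_{DL}$ at $B=A$, this is exactly $-\bigl(A^{A,B}_{\D'}(\{\beta^*\})+A^{A,B}_{\D'}(\{-\beta^*\})\bigr)$. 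Exponentiating gives $A_{DL}(\D',\D',\D')\,[1-\epsilon(\cdots)+O(\epsilon^2)]$. The $O(\epsilon^2)$ bound is justified because the Euler product converges absolutely and is analytic in small discs under (\ref{comditions}), which is also what Lemma \ref{A_{DL}(A,b,D)} asserts.

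The main obstacle is the sign and bookkeeping in Case 3, in particular correctly identifying which variable is labelled $-\beta^*$ after the substitution $\alpha^*\in\D\mapsto-\alpha^*$. I would check it by writing the local factor at $P$ explicitly with $u=|P|^{-1/2}$ and expanding to first order. This is the same computation as in Mason's thesis \cite{mason1}, and the only function-field change is $\z(s)=(1-q^{1-s})^{-1}$, which does not affect the structure.
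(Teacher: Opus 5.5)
\textbf{There is a genuine gap.} Your Cases 1 and 2 are not merely unnecessary; what you assert there is false. The expansion is only valid, and only needed, when $\alpha^*,\beta^*\in\D$, so that $\D'=\D\setminus\{\alpha^*,\beta^*\}$. That is the only situation where the paper uses it (Case 4 in the proof of Theorem \ref{Residue}). The printed statement omits this hypothesis, and you should add it rather than try to cover the other cases.

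\textbf{Cases 1 and 2.} Write $\epsilon=\alpha^*+\beta^*$.

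In your Case 1, $\D=\D'$, so the left side is exactly $A_{DL}(\D',\D',\D')$. The right side, however, carries the linear term $-\epsilon c$ with $c=A^{A,B}_{\D'}(\{\beta^*\})+A^{A,B}_{\D'}(\{-\beta^*\})$. A nonzero coefficient of $\epsilon$ cannot be absorbed into $O(\epsilon^2)$, and $c\neq 0$ in general. For $\D'=\emptyset$ one has $A^{A,B}_{\emptyset}(\{\gamma\})=A_D'(\gamma;\gamma)$ as in (\ref{AD}), so at $\beta^*=0$ you get $c=\sum_P 2\log|P|/(|P|^2-1)>0$.

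In your Case 2 only one member of the pair lies in $\D$, so nothing cancels. At $\epsilon=0$ you obtain $A_{DL}$ evaluated at $\D'\cup\{-\beta^*\}$, which is not $A_{DL}(\D',\D',\D')$. Already $A_{DL}(\{x\},\{x\},\{x\})=\prod_P(|P|^2-|P|^{2x})/(|P|^2-1)\neq 1=A_{DL}(\emptyset,\emptyset,\emptyset)$ for real $x\neq 0$. So your ``key observation in every case'' holds only in Case 3.

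\textbf{Case 3.} The strategy (analyticity, zeroth-order cancellation, first-order logarithmic derivative) is the right one, but two steps need repair.

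First, the singular factors inside $A_{DL}(\D,\D,\D)$ are not removed by the $\dagger$ convention or by $Q(\D)$. These are the diagonal $\z(1)$'s of $Z_{\z}(\D^{-},\D)$ and the factors $\z(1\pm(\alpha^*+\beta^*))$ from $Z_{\z}(\D^{-},\D^{-})Z_{\z}(\D,\D)$. $Q(\D)$ carries its own copies, which produce the double pole in (\ref{Oterm}). The ones in $A_{DL}$ instead cancel against divergences of the Euler product. So you must first merge the local factors of $Y_{DL}^{-1}$ (via $\z(s)=\prod_P(1-|P|^{-s})^{-1}$) with the Euler factor into a single product $\prod_P\mathcal{A}_P$, absolutely and uniformly convergent near $\epsilon=0$. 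Only then does your factor-by-factor cancellation prove the zeroth-order identity, using that at $\epsilon=0$ one has $\D=\D'\cup\{\beta^*,-\beta^*\}$ and $\D^{-}=(\D')^{-}\cup\{\beta^*,-\beta^*\}$. Only then may you differentiate termwise.

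Second, the chain rule does not give the coefficient you wrote. With $\alpha^*=-\beta^*+\epsilon$:
the argument $-\alpha^*$ in the $(A\setminus\D)\cup\D^{-}$ slot moves by $-\epsilon$, but the $B$-slot argument $\alpha^*$ moves by $+\epsilon$. So the linear coefficient is $-\partial_a\log A_{DL}|_{a=\beta^*}+\partial_b\log A_{DL}|_{b=-\beta^*}$. It is evaluated at $A=B=\D'\cup\{\beta^*,-\beta^*\}$ with subset $\D'$.

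The first term is $-A^{A,B}_{\D'}(\{\beta^*\})$. But $A^{A,B}_{\D'}(\{-\beta^*\})$ is an $A$-slot derivative, not a $B$-slot one.

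The missing ingredient is that $A_{DL}(A,B,\D')|_{B=A}=A_{DL}(\D',\D',\D')$ does not depend on any $\gamma\in A\setminus\D'$. Hence $(\partial_a+\partial_b)\log A_{DL}=0$ when the two copies of $\gamma$ coincide. This converts the second term into $-A^{A,B}_{\D'}(\{-\beta^*\})$ and gives the stated sign.

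The explicit local check you propose confirms this. At $\alpha^*=-\beta^*$, the zeta part and the Euler part of $\partial_{\alpha^*}\log\mathcal{A}_P$ are each minus the corresponding local parts of $A^{A,B}_{\D'}(\{\beta^*\})+A^{A,B}_{\D'}(\{-\beta^*\})$.
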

	

	\subsection{Residue Theorem for $J^*_{DL}(A)$}\label{Residue Theorem for $J^*_{DL}(A)$}
	
	In this subsection, we use the lemmas above to prove the Residue Theorem for $J_{DL}^*(A)$.
	
	\begin{theorem}\label{Residue}
		Let $A$ be a finite set of complex numbers. Let also $\alpha^*,\beta^*\in A$ and $A'=A\backslash\{\alpha^*,\beta^*\}$ and let $J^*_{DL}(A)$ be as in equation (\ref{J^*_{DL}1}). Then
		
		\begin{equation*}
			\begin{split}
				\underset{\alpha^*=-\beta^*}{\mathrm{Res}}&(J^*_{DL}(A)) \\
				& =J^*_{DL}(A'\cup \{\beta^*\}) + J^*_{DL}(A'\cup \{-\beta^*\}) -\frac{\xx'_{D}}{\xx_{D}} \left(1+\beta^*\right) J^*_{DL}(A')
			\end{split}
		\end{equation*}
		where $\frac{\xx'_{D}}{\xx_{D}}(s)=-\log|D|+\log q$ comes from the functional equation. We also have
		
		\begin{equation*}
			\underset{\alpha^*=0}{\mathrm{Res}}(J^*_{DL}(A))=0.
		\end{equation*}
	\end{theorem}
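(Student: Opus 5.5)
We follow the Mason--Snaith strategy and sort the terms of $J^*_{DL}(A)$ in (\ref{J^*_{DL}1}) by how $\D$ meets the pair $\{\alpha^*,\beta^*\}$. A pole at $\alpha^*=-\beta^*$ can only come from a factor $\z(1+\alpha^*+\beta^*)$, which appears either inside the square root, through $Z_{\z}(\D,\D)$ or $Z_{\z}(\D^-,\D^-)$, or inside $H^{A,B}_\D(W_r)$. Write $\D=\D'\cup S$ with $S\subseteq\{\alpha^*,\beta^*\}$. This gives four cases: $S=\emptyset$, $S=\{\alpha^*\}$, $S=\{\beta^*\}$ and $S=\{\alpha^*,\beta^*\}$. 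We use $\z(1+s)=(1-q^{-s})^{-1}$, which has residue $1/\log q$ at $s=0$ in the variable $s$.

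\emph{Case $S=\emptyset$.} Both $\alpha^*$ and $\beta^*$ lie in $A\setminus\D$. The pole comes only from partitions in which some block is $W_r=\{\alpha^*,\beta^*\}$, since $(\z'/\z)'(1+\alpha^*+\beta^*)$ has a double pole. It can also come from the products $H(\{\alpha^*\})H(\{\beta^*\})$, where each factor contains $\z'/\z(1+\alpha^*+\beta^*)$ with a simple pole, giving a double pole in total.
\begin{itemize}
\item The double poles must cancel. The residue of the remaining simple-pole parts is obtained with Lemmas \ref{expansion H_D^{A,B}} and \ref{expansion A_D^{A,B}}: expand the other $H(W_r)$ and $A(W_r)$ to first order in $\alpha^*+\beta^*$.
\item The first-order terms produce $H_{\D'}(W_r+\{\pm\beta^*\})$ and $A_{\D'}(W_r+\{\pm\beta^*\})$. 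These reassemble into the $S=\emptyset$ parts of $J^*_{DL}(A'\cup\{\beta^*\})+J^*_{DL}(A'\cup\{-\beta^*\})$.
\item The term in which $\alpha^*$ and $\beta^*$ are treated as a single pair merged with other blocks contributes to $J^*_{DL}(A')$.
\end{itemize}

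\emph{Cases $S=\{\alpha^*\}$ and $S=\{\beta^*\}$.} Here the pole arises from $H^{A,B}_\D(\{\beta^*\})$, which contains $\z'/\z(1+\beta^*-\delta)$ with $\delta=\alpha^*$, i.e. $\z'/\z(1+\alpha^*+\beta^*)$ read with the sign conventions. On the residue locus $-\alpha^*=\beta^*$, so the prefactor $|D|^{-\alpha^*}\xx(\tfrac12+\alpha^*)$ becomes $|D|^{\beta^*}\xx(\tfrac12-\beta^*)$. These two cases should combine with the $S=\{\alpha^*,\beta^*\}$ case.

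\emph{Case $S=\{\alpha^*,\beta^*\}$.} The factor $\z(1+\alpha^*+\beta^*)$ appears in the square root via $Z_{\z}(\D,\D)$ and, via $Z_{\z}(\D^-,\D^-)$, as $\z(1-\alpha^*-\beta^*)$. It also appears in $Z^\dag(\D^-,\D)^2$ through the cross terms $\z(1-\alpha^*+\beta^*)$. The prefactor tends to $|D|^{0}\xx(\tfrac12+\alpha^*)\xx(\tfrac12+\beta^*)=1$ on the locus, up to the first-order factor $|D|^{-(\alpha^*+\beta^*)}=1-(\alpha^*+\beta^*)\log|D|+\dots$. Combining this expansion with Lemma \ref{expansion A_{DL}} for $A_{DL}(\D,\D,\D)$, the $-(\alpha^*+\beta^*)$ times a simple pole leaves a finite piece. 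That piece is exactly $(\log|D|-\log q)J^*_{DL}(A')$ once the surviving $\log q$ factors from $(-\log q)^{\#\D}$ and from the residue $1/\log q$ are matched. This yields the term $-\frac{\xx_D'}{\xx_D}(1+\beta^*)J^*_{DL}(A')$.

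After this bookkeeping, what remains is to check that the pieces where $\beta^*\in\D'$-type terms occur reassemble into the $\D\ni\pm\beta^*$ parts of $J^*_{DL}(A'\cup\{\pm\beta^*\})$. The $\pm\beta^*$ symmetry is what makes both $J^*_{DL}(A'\cup\{\beta^*\})$ and $J^*_{DL}(A'\cup\{-\beta^*\})$ appear.

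\emph{Pole at $\alpha^*=0$.} The only candidate factors are $Y_{\z}(\D)$ and $Y_{\z}(\D^-)$ together, and $\z'/\z(1+2\alpha^*)$ in $H$. Under the square root, $\z(1-2\alpha)/\z(1+2\alpha)$ is regular at $0$ because the two poles cancel in the ratio. The term $\z'/\z(1+2\alpha^*)$, with its pole $-1/(2\alpha^*)$, cancels between the subsets $\D$ with $\alpha^*\notin\D$ and $\D\cup\{\alpha^*\}$, since at $\alpha^*=0$ these have equal prefactors up to sign. So the residue is $0$.

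\emph{Main obstacle.} The hardest step is the combinatorial matching in the $S=\emptyset$ case. There the double poles must be shown to cancel, and the simple-pole partition sums must be regrouped into set partitions of $A'\cup\{\pm\beta^*\}$. I would first try to follow Mason's thesis verbatim, checking only that the $\log q$ normalisations of the function-field $\z$ line up.
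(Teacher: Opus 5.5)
Your four-way split by $\D\cap\{\alpha^*,\beta^*\}$ matches the paper's. However, you assign the residue terms to the wrong cases, and two of your steps would fail.

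\textbf{The case $\alpha^*,\beta^*\notin\D$.} Here there is nothing of first order to expand. By (\ref{{H}^{A,B}_D}), $H^{A,B}_\D(\{\gamma\})$ involves only $\gamma$ and the elements of $\D$, because the cross terms with $A\setminus\D$ cancel once $B=A$. So $H(\{\alpha^*\})H(\{\beta^*\})$ is regular at $\alpha^*=-\beta^*$. Moreover $\widetilde Q(\D)$, and $H^{A,B}_\D(W_r)$, $A^{A,B}_\D(W_r)$ for blocks avoiding $\alpha^*,\beta^*$, do not depend on $\alpha^*,\beta^*$ at all: setting $\beta_\alpha=\alpha$ for $\alpha\in A\setminus\D$ makes $A_{DL}(A,B,\D)$ independent of $\alpha$. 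The only singular factor is $(\z'/\z)'(1+\alpha^*+\beta^*)=(\alpha^*+\beta^*)^{-2}+O(1)$. So this case gives a pure double pole and residue zero. Lemmas \ref{expansion A_D^{A,B}}--\ref{expansion A_{DL}} are expansions for $\alpha^*,\beta^*\in\D$ relative to $\D'=\D\setminus\{\alpha^*,\beta^*\}$, and they do not apply here.

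\textbf{The case $\alpha^*,\beta^*\in\D$.} The parts of $J^*_{DL}(A'\cup\{\pm\beta^*\})$ with $\pm\beta^*\notin\D$, together with the $J^*_{DL}(A')$ term, all come from this case. Here $Q(\D)$ contains $\z(1+\alpha^*+\beta^*)\z(1-\alpha^*-\beta^*)=-\bigl((\alpha^*+\beta^*)\log q\bigr)^{-2}+\tfrac1{12}+\cdots$, which is a double pole. Your ``$-(\alpha^*+\beta^*)$ times a simple pole'' is regular and would contribute nothing to the residue. Multiply $-(\alpha^*+\beta^*)^{-2}$ by the full first-order expansion of the remaining factors: $|D|^{-(\alpha^*+\beta^*)}q^{\alpha^*+\beta^*}$; the $\z$-ratio over $\gamma\in\D'$, which gives the singleton blocks $H_{\D'}(\{\pm\beta^*\})$; Lemmas \ref{expansion H_D^{A,B}} and \ref{expansion A_D^{A,B}}, which give the blocks $W_r\cup\{\pm\beta^*\}$; and Lemma \ref{expansion A_{DL}}. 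This produces the residue $P_{\D'}\bigl((A'\setminus\D')\cup\{\beta^*\}\bigr)+P_{\D'}\bigl((A'\setminus\D')\cup\{-\beta^*\}\bigr)+(\log|D|-\log q)P_{\D'}(A'\setminus\D')$. The leading term $-(\alpha^*+\beta^*)^{-2}P_{\D'}(A'\setminus\D')$ cancels the double pole of the term $\D'$ from the first case.

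\textbf{The mixed cases.} These need no combining with anything. For $\alpha^*\in\D$, the pole of $H^{A,B}_\D(\{\beta^*\})$ comes from $-\frac{\z'}{\z}(1+\beta^*+\alpha^*)$, with residue $1$. It does not come from $\frac{\z'}{\z}(1+\beta^*-\alpha^*)$, which is regular there. The residues are exactly the terms of $J^*_{DL}(A'\cup\{-\beta^*\})$ (resp.\ $J^*_{DL}(A'\cup\{\beta^*\})$) in which $-\beta^*$ (resp.\ $\beta^*$) lies in the subset.

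\textbf{The pole at $\alpha^*=0$.} Your argument has the same defect. For $\alpha^*\in\D$ you dropped the diagonal factors $\z(1+2\alpha^*)$ of $Z_{\z}(\D,\D)$ and $\z(1-2\alpha^*)$ of $Z_{\z}(\D^-,\D^-)$. With them included, the square root contributes $\z(1-2\alpha^*)$, which has a simple pole at $0$ with residue $-1/(2\log q)$. Combined with the extra $-\log q$ from $(-\log q)^{\#\D}$, this gives the residue $\tfrac12P_{\D'}(A\setminus\D)$, where $\D'=\D\setminus\{\alpha^*\}$. This uses that at $\alpha^*=0$ the cross factors with $\gamma\in\D'$, $|D|^{-\alpha^*}\xx(\tfrac12+\alpha^*)$, $A_{DL}$ and the $\widetilde H$'s all reduce to those of $\D'$. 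That residue is precisely what cancels the residue $-\tfrac12P_{\D'}(A\setminus\D)$ coming from $\frac{\z'}{\z}(1+2\alpha^*)=-\frac{1}{2\alpha^*}+O(1)$ in the singleton block $\{\alpha^*\}$ of the term for $\D'$. If the $\alpha^*\in\D$ terms were regular at $0$, as you assert, that pole would have nothing to cancel against. Your claimed cancellation between $\D$ and $\D\cup\{\alpha^*\}$ is therefore inconsistent with your own description of the pole sources.
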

	\begin{proof}
		Let $\D\subseteq A$. Define $P_\D$ and $Q$ as
		
		\begin{equation*}
			\begin{split}
				Q(\D) &= \left(-\log q\right)^{\#\D} |D|^{-\sum_{\delta\in \D}\delta} \prod_{\delta\in \D} \xx\left(\tfrac{1}{2}+\delta\right) \\
				& \ \ \ \times  \sqrt{\frac{ Z_{\z}(\D^{-},\D^{-}) Z_{\z}(\D,\D) Y_{\z}(\D^{-})  }{  Z_{\z}^\dag( \D^{-},\D)^2  Y_{\z}(\D)}}, \\
				\widetilde{Q}(\D) &= Q(\D)  A_{DL}(\D,\D,\D),\\
			\end{split}
		\end{equation*}
		and
		
		\begin{align*}
			P_\D (A\backslash \D)&=\widetilde{Q}(\D) \sum_{\substack{A\backslash \D=\sum_{r\leqslant R} W_r \\ |W_r|\leqslant 2}} \prod_{r=1}^R \left(\widetilde{H}^{A,B}_\D(W_r)\right)  ,
		\end{align*}
		where $	\widetilde{H}^{A,B}_\D(W_r) = H_\D^{A,B}(W_r)+A_D^{A,B}(W_r)$ is defined as in (\ref{widetilde{H}^{A,B}_D}). Thus, we can write
		
		\begin{equation*}
			\begin{split}
				J^*_{DL}(A)  
				&= \sum_{\D\subseteq A} P_\D(A\backslash \D).
			\end{split}
		\end{equation*}
		In order to find the residue of $J^*_{DL}(A)$ we start by showing that the pole at $\alpha^*=-\beta^*$ is simple and $\mathrm{Res}_{\alpha^*=-\beta^*}(J^*_{DL}(A))= J^*_{DL}(A'\cup \{\beta^*\}) + J^*_{DL}(A'\cup \{-\beta^*\}) -\frac{\mathcal{X}'_{D}}{\mathcal{X}_{D}} \left(1+\beta^*\right) J^*_{DL}(A') $. Let $\alpha^*,\beta^*\in A$, $A'=A\backslash \{\alpha^*,\beta^*\}, \D'=\D\cap A',$ $(A\backslash \D)'=(A\backslash \D)\cap A'$ and consider the residue of $P_\D(A\backslash \D)$ when $\alpha^*\to-\beta^*$. Given the above, we have the following 4 cases:
		
		\begin{enumerate}
			\item $\alpha^*,\beta^*\in A\backslash \D$
			\item $\alpha^*\in \D ,\beta^*\in A\backslash \D$
			\item $\alpha^*\in A\backslash \D,\beta^*\in \D$
			\item $\alpha^*,\beta^*\in \D$
		\end{enumerate}
		
		\textit{\textbf{Case 1:}} Suppose $\alpha^*,\beta^*\in A\backslash \D$, then $\widetilde{Q}(\D)$ is independent of $\alpha^*$ and $\beta^*$. Therefore, as $\alpha^*\to-\beta^*$ the only pole of $P_\D(A\backslash \D)$ comes when $W_r=\{\alpha^*,\beta^*\},$ i.e., when
		
		\begin{equation*}
			H_\D^{A,B}\left(\{\alpha^*,\beta^*\}\right) = \left(\frac{\z'}{\z}\right)' \left(1+\alpha^*+\beta^*\right).
		\end{equation*}
		We know that $A_\D^{A,B}(W_r)$ is analytic for all $W_r$ and that $	\left(\frac{\z'}{\z}\right)'(x) = \frac{q^{1+x} (\log q)^2}{\left(q-q^x\right)^2},$ with the expansion $\left(\frac{\z'}{\z}\right)'(1+x) = \frac{1}{x^2} + O(1).$ So, we have that
		
		\begin{equation*}
			\widetilde{H}^{A,B}_\D(W_r) =\begin{cases}
				\frac{1} {\left(\alpha^*+\beta^*\right)^2} +O(1), & \text{if} \ W_r=\{\alpha^*,\beta^*\},\\
				O(1), & \text{otherwise}.
			\end{cases}
		\end{equation*}
		Thus,
		
		\begin{equation*}
			\begin{split}
				P_\D&(A\backslash \D) 
				= \frac{1} {\left(\alpha^*+\beta^*\right)^2}  \ \widetilde{Q}(\D)   \sum_{\substack{(A\backslash \D)'=\sum W_r \\ |W_r|\leqslant 2 }}\prod_{r} \widetilde{H}^{A,B}_\D(W_r) +O(1)\\
				&= \frac{1} {\left(\alpha^*+\beta^*\right)^2} \ P_\D\left((A\backslash \D)'\right) +O(1).
			\end{split}
		\end{equation*}
		It is clear that $P_\D\left((A\backslash \D)'\right)$ is independent of $\alpha^*$ and $\beta^*$. Therefore, $P_\D(A\backslash \D)$ will not contain the term $1/\left(\alpha^*+\beta^*\right)$. Hence,  $\underset{\alpha^*\to\beta^*}{Res} \left(P_\D\left((A\backslash \D)'\right)\right)=0.$ \\ $\ \ $ \\
		
		
		\textit{\textbf{Case 2:}} Suppose $\alpha^*\in \D, \beta^*\in A\backslash \D.$ Then $Q(\D)$ is independent of $\beta^*$ and regular as $\alpha^*\to-\beta^*$. It can been seen that the only pole of $P_\D(A\backslash \D)$ comes when $W_r=\{\beta^*\}$, i.e., when
		
		\begin{equation*}
			\begin{split}
				H_\D^{A,B}\left( \{\beta^*\}\right) = \sum_{\gamma\in \D} \left(\frac{\z'}{\z}(1+\beta^*-\gamma)-\frac{\z'}{\z}(1+\beta^*+\gamma)\right) -\frac{\z'}{\z}(1+2\beta^*).
			\end{split}
		\end{equation*}
		When $\gamma=\alpha^*$, we can see that $-\frac{\z'}{\z}(1+\beta^*+\gamma)$ has a simple pole with residue of $1$, that is
		
		\begin{equation*}
			\underset{\alpha^*\to-\beta^*}{\text{Res}} \left(\frac{\z'}{\z}(1+\alpha^*+\beta^*)\right)= 	\underset{\alpha^*\to-\beta^*}{\text{Res}} \left( \frac{ \log q}{1-q^{\alpha^*+\beta^*}} \right)
		\end{equation*}
		by the series expansion given by $-\frac{\z'}{\z}(1+x)= \frac{1}{x} +O(1)$. Note that since $A_\D^{A,B}(W_r)$ has no poles, we have
		
		\begin{equation*}
			\widetilde{H}^{A,B}_\D = \begin{cases}
				\frac{1}{\alpha^*+\beta^*}+O(1) & \text{if}, \ W_r=\{\beta^*\},\\
				O(1), & \text{otherwise}.
			\end{cases}
		\end{equation*}
		Therefore,
		
		\begin{equation*}
			\begin{split}
				P_\D(A\backslash \D) 
				&= \frac{1}{\alpha^*+\beta^*} \ \widetilde{Q}(\D) \sum_{\substack{(A\backslash \D)'=\sum W_r \\ |W_r|\leqslant 2 }} \prod_r \widetilde{H}^{A,B}_\D(W_r) +O(1) \\
				&= \frac{1}{\alpha^*+\beta^*} \ P_\D\left((A\backslash \D)'\right),
			\end{split}
		\end{equation*}
		and so $\underset{\alpha^*\to\beta^*}{\text{Res}} (P_\D(A\backslash \D)) = P_{\D'\cup\{\beta^*\}}\left((A\backslash \D)'\right).$\\$\ \ $ \\
		
		
		\textit{\textbf{Case 3:}} We consider the case when $\alpha^*\in A\backslash \D, \beta^*\in \D.$ Then $A_{DL}(\D)$ is regular as $\alpha^*\to-\beta^*.$ Similar to the previous case, the only pole of $P_\D(A\backslash \D)$ comes when $W_r=\{\alpha^*\}$, i.e., when
		
		\begin{equation*}
			\widetilde{H}^{A,B}_\D = \begin{cases}
				\frac{1}{\alpha^*+\beta^*} +O(1), \ \text{if}, & W_r=\{\alpha^*\},\\
				O(1), & \text{otherwise}.
			\end{cases}
		\end{equation*}
		Therefore,
		
		\begin{equation*}
			\begin{split}
				P_\D(A\backslash \D) &= \frac{1}{\alpha^*+\beta^*} \ \widetilde{Q}(\D) \sum_{\substack{(A\backslash \D)'=\sum W_r \\ |W_r|\leqslant 2 }} \prod_r \widetilde{H}^{A,B}_\D(W_r) +O(1)\\
				&= \frac{1}{\alpha^*+\beta^*} \ P_\D\left((A\backslash \D)'\right),
			\end{split}
		\end{equation*}
		and $\underset{\alpha^*\to\beta^*}{\text{Res}}  (P_\D(A\backslash \D)) = P_{\D}\left((A\backslash \D)'\right).$\\$\ \ $ \\

		\textit{\textbf{Case 4:}} Finally, we consider  the case when $\alpha^*, \beta^*\in \D.$ In the previous section we have expansions for $A_\D^{A,B}(W_r),H_\D^{A,B}(W_r)$ and $A_{DL}(\D,\D,\D)$ in Lemmas \ref{expansion A_D^{A,B}}, \ref{expansion H_D^{A,B}} and \ref{expansion A_{DL}} respectively. It remains to find an expansion for $Q(\D)$ around $\alpha^*=\beta^*$.\\  
		First, we write
		
		\begin{equation*}\label{Q(D')}
			\begin{split}
				Q(\D) &= \left(-\log q\right)^{\#\D} |D|^{-\sum_{\delta\in \D}\delta} \prod_{\delta\in \D} \xx\left(\tfrac{1}{2}+\delta\right)   \sqrt{\frac{Z_{\z}(\D^{-},\D^{-}) Z_{\z}(\D,\D) Y_{\z}(\D^{-})  }{  Z_{\z}^\dag( \D^{-},\D)^2  Y_{\z}(\D)}} \\
				&= Q(\D') \ \left(\log q\right)^2 \ |D|^{-(\alpha^*+\beta^*)} \ q^{\alpha^*+\beta^*}\\
				&\ \ \ \times \prod_{\substack{\gamma\in  \D'}} \frac{\z(1-\alpha^*-\gamma) \z(1-\beta^*-\gamma) \z(1+\gamma+\alpha^*) \z(1+\gamma+\beta^*)}{\z(1-\alpha^*+\gamma) \z(1-\beta^*+\gamma) \z(1+\alpha^*-\gamma) \z(1+\beta^*-\gamma)}\\
				& \ \ \ \times \frac{\z(1-\alpha^*-\beta^*) \z(1+\alpha^*+\beta^*) \z(1-2\alpha^*) \z(1-2\beta^*)}{\z(1-\alpha^*+\beta^*)  \z(1-\beta^*+\alpha^*)}.
			\end{split}
		\end{equation*}
		Then, we have the expansion series for $Q(\D)$ around $\alpha^*=\beta^*$ given by
		
		\begin{equation}\label{Oterm}
			\begin{split}
				Q(\D)
				& = Q(\D') \left(\log q\right)^2\ \left(\frac{-1}{(\alpha^*+\beta^*)^2 \left(\log q\right)^2}+\frac{1}{12}+O\left((\alpha^*+\beta^*)^2\right)\right)\\
				&\ \ \ \times \Bigg[\Bigg(1 -\left(\alpha ^*+\beta ^*\right) \left(H_{\D'}^{A,B} (\{\beta\})+H_{\D'}^{A,B} (\{-\beta\})+\log |D| -\log q\right) \\
				&\ \ \ +O\left(\left(\alpha^*+\beta^*\right)^2\log|D|\right) \Bigg].
			\end{split}
		\end{equation}
		Combining these results we deduce from equation (\ref{Oterm}) that
		
		\begin{equation*}\label{case4}
			\begin{split}
				P_\D(A\backslash \D) 
				&=  -\frac{1}{(\alpha^*+\beta^*)^2} P_{\D'}\left(A\backslash \D\right)+\frac{1}{(\alpha^*+\beta^*) }  \Bigg(P_{\D'}\left(\left(A\backslash \D\right)\cup \{\beta^*\}\right)\\
				&\ \ \ + P_{\D'}\left(\left(A\backslash \D\right)\cup \{-\beta^*\}\right) -  \left(\log |D| -\log q\right) P_{\D'}\left(A\backslash \D\right)\Bigg) \\
				&\ \ \ +O\left((\alpha^*+\beta^*)^2\log|D|\right) +O\left(1\right).
			\end{split}
		\end{equation*}
		Thus, we have
		
		\begin{equation*}
			\begin{split}
				\underset{\alpha^*\to-\beta^*}{\mathrm{Res}}\left(P_\D\left(A \backslash \D\right)\right)
				& = P_{\D'}\left(\left(A\backslash \D\right)\cup \{\beta^*\}\right) + P_{\D'}\left(\left(A\backslash \D\right)\cup \{-\beta^*\}\right) \\
				&\ \ \ -  \left(\log |D| -\log q\right) P_{\D'}\left(A\backslash \D\right).
			\end{split}
		\end{equation*}
		Note that the term $\frac{1}{(\alpha^*+\beta^*)^2}$ will be canceled with the term $\frac{1}{(\alpha^*+\beta^*)^2}$ in the first case. Thus, combining the four cases we obtain the first part of the theorem. Namely,
		
		\begin{equation*}
			\begin{split}
				\underset{\alpha^*\to-\beta^*}{\mathrm{Res}}& \left(J_{DL}^*(A)\right)\\
				&= \sum_{\substack{\D\subseteq A\\\alpha^*,\beta^*\in \D}}    \Bigg(P_{\D'}\left(\left(A\backslash \D\right)\cup \{\beta^*\}\right) + P_{\D'}\left(\left(A\backslash \D\right)\cup \{-\beta^*\}\right)\\
				&\ \ \ -  \left(\log |D| -\log q\right) P_{\D'}\left(A\backslash \D\right)\Bigg) + \sum_{\substack{\D\subseteq A\\\alpha^*\in \D,\beta^*\in A\backslash \D}} P_{\D'\cup\{-\beta^*\}}\left(\left(A\backslash \D\right)'\right)\\
			\end{split}
		\end{equation*}
		\begin{equation*}
			\begin{split}
				\color{white}\underset{\alpha^*\to-\beta^*}{\mathrm{Res}}
				& \ \ \  + \sum_{\substack{\D\subseteq A\\\alpha^*\in A\backslash \D,\beta^*\in \D}} P_{\D}\left(\left(A\backslash \D\right)'\right)  \\
				&=  -\frac{\mathcal{X}'_D}{\mathcal{X}_D}\left(\tfrac{1}{2}+\beta^*\right) J_{DL}^*\left(A\right) + J_{DL}^*\left(A\cup\{\beta^*\}\right) +J_{DL}^*\left(A\cup\{-\beta^*\}\right),
			\end{split}
		\end{equation*}
		where
		
		\begin{equation*}
			\begin{split}
				\frac{\xx'_D}{\xx_D}(s) &= -\log|D|+ \log q.
			\end{split}
		\end{equation*} 
		Next, we consider the pole at zero. We split the proof into two cases.
		
		\begin{enumerate}
			\item $\alpha^*\in A\backslash \D.$
			\item $\alpha^*\in \D.$
		\end{enumerate}	
		
		\textit{\textbf{Case 1}} Suppose $\alpha^*\in A\backslash \D$. Then $Q(\D)$ is independent of $\alpha^*$, and $A_{DL}(\D)$ and $A_\D^{A,B}(W_r)$ are regular at zero. So the only pole of $P_\D(A\backslash \D)$ comes when $W_r=\{\alpha^*\},$ i.e., when
		
		\begin{equation*}
			\begin{split}
				H_\D^{A,B}\left(\{\alpha^*\}\right) &= \sum_{\gamma\in D} \left(\frac{\z'}{\z}(1+\alpha^*-\gamma)-\frac{\z'}{\z}(1+\alpha^*+\gamma)\right)-\frac{\z'}{\z}(1+2\alpha^*).
			\end{split}
		\end{equation*}
		Remember that the expansion for $\frac{\z'}{\z}(1+x) $ is $-\frac{1}{x}+O\left(1\right)$, so the residue as $\alpha^*\to 0$ of $-\frac{\z'}{\z}(1+x) $ is $1$. Therefore
		
		\begin{equation*}
			\widetilde{H}_\D(W_r)= \begin{cases}
				\frac{1}{2\alpha^*}+O(1)&, \ \text{if} \ W=\{\alpha^*\},\\
				O(1) &, \text{otherwise}.
			\end{cases}
		\end{equation*}
		So,
		
		\begin{equation*}
			\begin{split}
				P_\D(A\backslash \D) &= \widetilde{Q}(\D) \sum_{\substack{A\backslash \D=\sum_rW_r\\|W_r|\le 2}} \prod_r \widetilde{H}^{A,B}_\D(W_r) \\
				&= \frac{1}{2\alpha^*} \ \widetilde{Q}(\D) \sum_{\substack{(A\backslash \D)'=\sum_rW_r\\|W_r|\le 2}} \prod_r \widetilde{H}^{A,B}_\D(W_r) +O(1).
			\end{split}
		\end{equation*}
		Thus,
		
		\begin{equation*}
			\underset{\alpha^*\to 0}{\mathrm{Res}} \left(P_\D(A\backslash \D)\right) =\frac{1}{2} P_\D\left((A\backslash \D)'\right).
		\end{equation*}$ \ \ $\\
		
		\textit{\textbf{Case 2}} Suppose $\alpha^*\in \D$, then $H_\D^{A,B}(A\backslash \D), A_{DL}(\D)$ and $A^{A,B}_\D(W_r)$ are regular at $\alpha^*=0$. Also note that $H_\D^{A,B}(A\backslash \D)\vert_{\alpha^*=0} =H_{\D'}^{A,B}(A\backslash \D).$ Then, similar to equation (\ref{Q(D')}), we have
		
		\begin{equation*}
			\begin{split}
				Q(\D) &= Q(\D') \ \log q \ |D|^{-\alpha^*} \ \xx\left(\tfrac{1}{2}+\alpha^*\right)\ \z(1-2\alpha^*)\\
				&\ \ \ \times \prod_{\substack{\gamma\in  \D'}} \frac{\z(1-\alpha^*-\gamma)  \z(1+\gamma+\alpha^*) }{\z(1-\alpha^*+\gamma)  \z(1+\alpha^*-\gamma)}.
			\end{split}
		\end{equation*}
		One can see that the only pole comes from the term $\z(1-2\alpha^*)$, which has a residue of $-\frac{1}{2 \log q}$. Thus
		
		\begin{equation*}
			\begin{split}
				\underset{\alpha^*\to 0}{\mathrm{Res}} &\left(P_\D\left(A\backslash \D\right)\right) \\
				&= -\frac{1}{2 \log q} \widetilde{Q}(\D') \prod_{\substack{\gamma\in  \D'}} \frac{\z(1-\gamma)  \z(1+\gamma) }{\z(1+\gamma)  \z(1-\gamma)} \sum_{\substack{A\backslash \D=\sum_rW_r \\|W_r|\le 2}} \prod_r \widetilde{H}^{A,B}_\D(W_r)\\
				&= -\frac{1}{2 } P_{\D'}\left(A\backslash \D\right).
			\end{split}
		\end{equation*}
		Finally, combining the two results gives that
		
		\begin{equation*}
			\begin{split}
				\underset{\alpha^*\to 0}{\mathrm{Res}} \left(J^*_{DL}(\D)\right) &= \frac{1}{2} \sum_{\substack{\D\subseteq A\\\alpha^*\in A\backslash \D} }P_\D\left((A\backslash \D)'\right) -\frac{1}{2 } \sum_{\substack{\D\subseteq A\\\alpha^*\in  \D} } P_{\D'}\left(A\backslash \D\right)\\
				&= \frac{1}{2} \sum_{\substack{\D\subseteq A'} }P_\D\left((A\backslash \D)'\right) -\frac{1}{2 } \sum_{\substack{\D\subseteq A'} } P_{\D'}\left(A\backslash \D\right)\\
				&= 0,
			\end{split}
		\end{equation*}
		which completes the proof of the second part of the theorem \ref{J^*_{DL}.}.
	\end{proof}

	
	\section{$n$-level Density of Zeros of Families of Quadratic Dirichlet $L$-functions}\label{ndensity}
	
	In this section we derive the formula for the $n$--level density of the zeros of quadratic Dirichlet $L$-functions over function fields associated to $\x_D$, with $D$ being a square-free monic polynomial in $\mathbb{F}_{q}[T]$. 
	
	Let $\gamma_{i,D}$ denote the $i^{th}$ zero of $L(s,\x_D)$ on the half-line (note that, unlike in the number field case, we do not need to assume that all of the complex zeros are on the half-line since the Riemann hypothesis is true for this family of $L$-functions). Recall that $L(s,\x_D)$ is a function of $u=q^{-s}$, therefore is periodic with period $2\pi i/\log q,$ so we can confine our analysis of the zeros for the range $-\pi i/\log q \leqslant \mathfrak{I}(s) \leqslant \pi i/\log q$. 
	
	We define the $n$-correlation function, $S_n^{DL}(f)$ for the zeros for a family of $L$-functions,to be,
	
	\begin{equation*}
		S_n^{DL}(f) = \sum_ {D\in\h} \sum_{t_1,\cdots,t_n>0} f\left(\gamma_{t_1,D}, \cdots, \gamma_{t_n,D}\right),
	\end{equation*}
	where $f$ is an even $(2\pi/\log q)$-periodic test function and holomorphic.
	Note that the sum over the zeros in $S_n^{DL}(f)$ is not restricted to a sum over distinct indices at this stage.
	
	\begin{theorem}\label{S_n^{DL}}
		Let $C_{-}$ denote the path from $-c-\pi i/\log q$ up to  $-c+\pi i/\log q$ and let $C_{+}$ denote the path from $c-\pi i/\log q$ up to $c+\pi i/\log q$, where $3/4>c>1/2+1/\log|D|$. Let $f$ be an even $(2\pi/\log q)$--periodic test and holomorphic function of $n$ variables such that
		
		\begin{equation*}
			f\left( \theta_{j_1},\cdots,\theta_{j_n}\right) = f\left( \pm\theta_{j_1},\cdots,\pm\theta_{j_n}\right).
		\end{equation*}
		Then,
		
		\begin{equation*}
			\begin{split}
				&2^n\left(2\pi i\right)^n S_n^{DL}(f) \\
				&= \sum_{\substack{K\cup L\cup M \\=\{1,\cdots,n\}}} \int_{C^{K}_{+}} \int_{C^{L\cup M}_{-}} \left(-1\right)^{|M|} J_{DL}\left(z_K\cup -z_L,-z_M\right) f\left(iz_1,\cdots,iz_n\right) d_{z_1}\cdots d_{z_n}
			\end{split}
		\end{equation*}
		where $z_K=\{z_k:k\in K\},$ $-z_L=\{-z_l: l\in L\}$ and $-z_M=\{-z_m: m\in M\},$ $\int_{C^{K}_{+}} \int_{C^{L+M}_{-}}$ means we are integrating all the variables in $z_K$ along the $C_{+}$ path and all others along the $C_{-}$ path and define
		
		\begin{equation}\label{ourJ_{DL}(A,B)}
			J_{DL}(A,B) = \sum_{D\in\h} \prod_{\alpha\in A} \frac{L'}{L} \left(\frac{1}{2}+\alpha,\x_D\right)  \prod_{\beta\in B} \frac{\mathcal{X}'_D}{\mathcal{X}_D}\left(\tfrac{1}{2}-\beta,\x_D\right),
		\end{equation}
		where $\xx_D= |D|^{\frac{1}{2}-s}\xx(s)$ with $\xx(s) = q^{-\frac{1}{2}+s}.$ The equation above is the extended definition of $J_{DL}(A)$ that was previously defined in equation (\ref{dR_{A,B}}). The sets $K,L,M$ are finite sets of integers and the sets $A,B$ are finite sets of complex numbers.
	\end{theorem}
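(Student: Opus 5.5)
The plan is the standard argument principle, in the style of Mason and Snaith. Fix $D\in\h$. We express the sum over zeros of $L(s,\x_D)$ in one period as a contour integral of $\frac{L'}{L}$ around the rectangle bounded by $C_+$ and $C_-$ (shifted by $\tfrac12$). The horizontal sides at $\II = \pm\pi/\log q$ cancel exactly, because $L(s,\x_D)$ is a polynomial in $u=q^{-s}$ and so is $2\pi i/\log q$-periodic, as is $f$. The rectangle contains all zeros, since RH holds and every zero has real part $\tfrac12$. Writing $s=\tfrac12+z$, for one variable we get
\begin{equation*}
2\pi i\sum_{t} f(\gamma_{t,D}) = \int_{C_+}\frac{L'}{L}\left(\tfrac12+z,\x_D\right) f(iz)\,dz - \int_{C_-}\frac{L'}{L}\left(\tfrac12+z,\x_D\right) f(iz)\,dz ,
\end{equation*}
with orientations chosen so that $C_-$ is traversed downward in the closed contour. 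Here $f(iz)$ evaluates $f$ at $\gamma$ when $z=-i\gamma$; the precise sign convention has to be checked against $f$ being even.

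Next we treat the $C_-$ integral with the functional equation $L(s,\x_D)=\xx_D(s)L(1-s,\x_D)$. Logarithmic differentiation gives
\begin{equation*}
\frac{L'}{L}\left(\tfrac12+z\right)=\frac{\xx_D'}{\xx_D}\left(\tfrac12+z\right)-\frac{L'}{L}\left(\tfrac12-z\right).
\end{equation*}
Substituting $z\mapsto -z$ in the $\frac{L'}{L}(\tfrac12-z)$ piece moves it back to a $C_+$-type path, and the evenness $f(\pm\theta)=f(\theta)$ lets us rewrite the argument of $f$. This is where the factor $2$ arises: the $C_+$ integral and the reflected $C_-$ integral contribute equally. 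We instead keep them as separate labelled pieces, which is what produces the three-way split $K\cup L\cup M$ and the overall factor $2^n$ after symmetrising. So each variable $z_j$ contributes one of three terms:
\begin{enumerate}
\item $\frac{L'}{L}(\tfrac12+z_j)$ on $C_+$ (index in $K$);
\item $\frac{L'}{L}(\tfrac12-z_j)$ on $C_-$ (index in $L$, written as $\alpha=-z_j$);
\item the gamma-factor term $\frac{\xx'_D}{\xx_D}(\tfrac12+z_j)$, which is $-\frac{\xx'_D}{\xx_D}(\tfrac12-\beta)$ with $\beta=-z_j$ on $C_-$ (index in $M$, carrying the sign $(-1)$).
\end{enumerate}

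For $n$ variables, $S_n^{DL}(f)$ is an unrestricted sum over $n$-tuples of zeros, so it factors as an $n$-fold iterated contour integral of $\prod_j\big(\text{one-variable kernel in } z_j\big)\, f(iz_1,\dots,iz_n)$. Expanding the product of the three-term kernels over all assignments of each index to $K$, $L$ or $M$ gives the sum over $K\cup L\cup M=\{1,\dots,n\}$. Summing over $D\in\h$ and interchanging the sum with the finite-length integrals then yields exactly $J_{DL}(z_K\cup -z_L,-z_M)$ as defined in (\ref{ourJ_{DL}(A,B)}). The interchange is justified because on $C_\pm$ with $c>\tfrac12+1/\log|D|$ everything is bounded, and the family is finite for fixed $g$.

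The main obstacle is bookkeeping rather than analysis. We must get the orientations and signs of $C_\pm$ right, check that the reflection $z\mapsto -z$ maps $C_-$ onto $C_+$ consistently with periodicity, and confirm that the normalisation $2^n(2\pi i)^n$ matches the evenness symmetrisation. Concretely, we would first verify the case $n=1$ against the 1-level computation following Theorem \ref{n-density theoremP}. That computation used $J^*_{DL}(iz,\emptyset)+J^*_{DL}(-iz,\emptyset)-J^*_{DL}(\emptyset,-iz)$, and matching it fixes the conventions. The general case then follows by taking products of the one-variable identity.
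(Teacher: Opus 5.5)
Your skeleton matches the paper's proof: apply the argument principle on the period rectangle in each variable, expand $(\int_{C_+}-\int_{C_-})^n$, and rewrite each $C_-$ factor with $\frac{L'}{L}(\tfrac12+z)=\frac{\mathcal{X}_D'}{\mathcal{X}_D}(\tfrac12+z)-\frac{L'}{L}(\tfrac12-z)$. Reading off the $K\cup L\cup M$ split then gives the sign $(-1)^{|M|}$, which comes from the minus sign in front of $\int_{C_-}$. That part is fine.

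\textbf{The gap is the factor $2^n$, and the mechanism you propose for it is wrong.} The reflection $z\mapsto -z$ does show that $\int_{C_-}\frac{L'}{L}(\tfrac12-z)f\,dz$ equals the corresponding $C_+$ integral. But that is an identity between terms on the right-hand side. It would rewrite $\int_{C_+}-\int_{C_-}$ as $2\int_{C_+}\frac{L'}{L}f\,dz-\int_{C_-}\frac{\mathcal{X}_D'}{\mathcal{X}_D}f\,dz$. It never puts a factor in front of the sum over zeros, and you discard it anyway once you keep the $K$ and $L$ pieces separate.

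So your bookkeeping cannot close. The rectangle $|\operatorname{Im} z|\le \pi/\log q$ encloses every zero in a period. Your one-variable identity is therefore correct only if $\sum_t$ runs over all zeros, and then there is no room for an extra $2$. If $\sum_t$ is meant to be the sum in $S_1^{DL}$, the identity is off by a factor $2$.

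The missing idea is that $S_n^{DL}$ sums only over $t_1,\dots,t_n>0$, i.e. zeros with positive ordinate, whereas the $n$-fold contour integral counts all $n$-tuples of zeros in the period. Because $\chi_D$ is real, $\mathcal{L}(u,\chi_D)$ has real coefficients, so zeros occur in pairs $\tfrac12\pm i\gamma$ with equal multiplicity. Combined with the evenness of $f$ in each variable separately, each $n$-tuple of positive ordinates is counted $2^n$ times. This gives exactly $(2\pi i)^n2^nS_n^{DL}(f)$ on the left, and it is the paper's own justification ("picking up poles from both the positive and negative zeros"). Zeros at $\gamma=0$ or $\gamma=\pi/\log q$ are their own partners and must be set aside, as the paper tacitly does.

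Finally, fixing the normalisation by matching the $n=1$ one-level computation you cite is circular. That computation is itself deduced from this theorem, so the factor has to be derived from the symmetry argument above.
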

	
	\begin{proof}
		It follows from Cauchy's Residue Theorem that
		
		\begin{equation*}
			\begin{split}
				\left(2\pi i\right)^n & 2^n S_n^{DL}(f) \\
				& = \left(\int_{C_{+}} -\int_{C_{-}}\right)^n \sum_{D\in\h}\left(\frac{L'}{L}\left(\tfrac{1}{2}+\alpha_1,\x_D\right)\cdots \frac{L'}{L}\left(\tfrac{1}{2}+\alpha_n,\x_D\right) \right)\\
				& \ \ \  \times f\left(-i\alpha_1,\cdots,-i\alpha_n\right)d\alpha_1\cdots d\alpha_n.
			\end{split}
		\end{equation*}{\color{white}d\\}
		The term $2^n$ appears here because we are picking up poles from both the positive and negative zeros, and we are interested only on the terms where all the zeros are above the real axis. With this we have
		
		\begin{equation*}
			\begin{split}
				\left(2\pi i\right)^n & 2^n S_n^{DL}(f) \\
				& =\sum_{\substack{K\cup L= \{1,\cdots,n\}}} \left(-1\right)^{|L|} \int_{C_{+}^K} \int_{C_{-}^L} \sum_{D\in\h} \prod_{i\in K\cup L} \frac{L'}{L}\left(\tfrac{1}{2}+\alpha_i,\x_D\right)\\
				& \ \ \  \times f\left(-i\alpha_1,\cdots,-i\alpha_n\right)d\alpha_1\cdots d\alpha_n.
			\end{split}
		\end{equation*}{\color{white}d\\}
		Recalling the functional equation $\frac{L'}{L}\left(1-s,\x_D\right) = \frac{\xx'_D}{\xx_D}(s,\x_D)-\frac{L'}{L} (s,\x_D)$ and rewriting the terms being integrated over the $C_{-}$ contour we have
		
		\begin{equation}\label{161}
			\begin{split}
				\left(2\pi i\right)^n & 2^n S_n^{DL}(f)\\
				&= \sum_{\substack{K\cup L\cup M=\\\{1,\cdots,n\}}}\left(-1\right)^{|M|} \int_{C_{+}^K} \int_{C_{-}^{L\cup M}}\sum_{D\in\h} \prod_{k\in K} \frac{L'}{L}\left(\tfrac{1}{2}+z_k,\x_D\right)\\
				& \ \ \times \prod_{l\in L} \frac{L'}{L}\left(\tfrac{1}{2}-z_l,\x_D\right)  \prod_{m\in M} \frac{\mathcal{X}'_D}{\mathcal{X}_D}\left(\tfrac{1}{2}-z_m,\x_D\right)\\
				& \ \ \times f\left(-i z_1,\cdots, -iz_n\right) dz_1 \cdots dz_n.\\
				&= \sum_{\substack{K\cup L\cup M=\\\{1,\cdots,n\}}} \left(-1\right)^{|M|} \int_{C{+}^K} \int_{C_{-}^{L\cup M}}\sum_{D\in\h} J_{DL}\left(z_K\cup -z_L,-z_M\right)\\
				& \ \ \times f\left(-i z_1,\cdots, -iz_n\right) dz_1 \cdots dz_n.
			\end{split}
		\end{equation}
	\end{proof}
	
	{\color{white}d\\}
	
	Define,
	
	\begin{equation}\label{J*_DL}
		\begin{split}
			J^*_{DL}(A,B) &= \sum_{D\in\h} \prod_{\beta\in B} \frac{\xx'_D}{\xx_D}\left(\tfrac{1}{2}-\beta,\x_D\right) \sum_{\D\subseteq A} \left|D\right|^{- \sum_{\delta\in \D}\delta }   \\
			&\ \  \ \times \prod_{\delta\in \D} \xx\left(\tfrac{1}{2}+\delta\right) \sqrt{\frac{ Z_{\z}(\D^{-},\D^{-}) Z_{\z}(\D,\D) Y_{\z}(\D^{-})  }{  Z_{\z}^\dag( \D^{-},D)^2  Y_{\z}(\D)}} \\
			& \ \ \ \times \left( -\log q\right)^{\#\D} A_{DL}(\D,\D,\D)\\
			& \ \ \ \times \sum_{\substack{A\setminus \D = \sum_rW_r\\ |W_r|\leqslant 2}} \prod_{r=1}^R \widetilde{H}^{A,B}_\D(W_r),
		\end{split}
	\end{equation}
	where $A_{DL}$ and $\widetilde{H}^{A,B}_\D$ are defined in (\ref{A_{DL}1}) and (\ref{widetilde{H}^{A,B}_D}) respectively. Using Conjecture \ref{Our Ratios Conjecture2.} and Theorem \ref{J^*_{DL}.}, we can replace our $J_{DL}(A,B)$ in (\ref{ourJ_{DL}(A,B)}) with $J_{DL}^*(A,B)+o\left(|D|\right)$. Note that $J_{DL}^*(A)$ in (\ref{J^*_{DL}1}) is the equivalent to $J_{DL}^*(A,\emptyset)$ in our new notation. Thus with this we have
	
	\begin{equation}\label{integral}
		\begin{split}
			\left(2\pi i\right)^n & 2^n S_n^{DL}(f)\\
			&= \sum_{\substack{K\cup L\cup M=\\\{1,\cdots,n\}}} \left(-1\right)^{|M|} \int_{C{+}^K} \int_{C_{-}^{L\cup M}}\sum_{D\in\h} J^*_{DL}\left(z_K\cup -z_L,-z_M\right)\\
			& \ \ \times f\left(-i z_1,\cdots, -iz_n\right) dz_1 \cdots dz_n + o\left(|D|\right).
		\end{split}
	\end{equation}
	It is clear that the size of the error term is unaffected by integration with respect to $z_i$, which let us pull the error term outside the integral.
	
	The next step is to move all the contours integrals onto the imaginary axis. In order to do this, we recall from Section \ref{Residue Theorem for $J^*_{DL}(A)$} that for $\alpha^*,\beta^*\in A$ we have,
	
	\begin{equation*}
		\begin{split}
			\underset{\alpha^*\to -\beta^*}{\mathrm{Res}}&J^*_{DL}(A,B) \\
			&= J^*\left(A'\cup\{\beta^*\}\right) +J^*\left(A'\cup\{-\beta^*\}\right)+ J\left(A',B\cup \{-\beta^*\}\right),
		\end{split}
	\end{equation*}
	with $A'=A\backslash\{\alpha^*,\beta^*\}$. Now, for a given $n$ such that $0\leqslant R \leqslant n,$ we define
	
	\begin{equation}\label{n,R}
		\sum\nolimits^{n,R} = \sum_{\substack{j_1,\cdots,j_n = 1\\j_i\neq j_k \forall i,k>R}}^\infty  .
	\end{equation}
	Let $K,L,M$ be fixed sets such that $K\cup L\cup M =\{1,\cdots,n\}$, and $I_{f,K,L,M}^{n,R}$ be the integral in (\ref{integral}), excluding the error term, with $n-R$ of the integrals shifted onto the imaginary axis. All the integrals on the imaginary axis are principal value integrals, so we have
	
	\begin{equation*}
		\begin{split}
			&I_{f,K,L,M}^{n,R} \\
			&= \int_{-i\pi/\ln q}^{i\pi/\ln q} \cdots \int_{-i\pi/\ln q}^{i\pi/\ln q} \int_{C_{+}^{K\cap \{1,\cdots,R\}}} \int_{C_{-}^{(L\cup M)\cap \{1,\cdots,R\}}} J_{DL}^*\left(z_K\cup-z_L,-z_M\right)\\
			&\ \ \ \times f\left(iz_1,\cdots,iz_n\right)dz_1 \cdots dz_n.
		\end{split}
	\end{equation*}
	Hence, Theorem \ref{S_n^{DL}} can be expressed with the new notation, that is
	
	\begin{equation}\label{n=n}
		\begin{split}
			\left(2\pi i\right) 2^n \sum_{D\in\h} \sum\nolimits^{n,n} &f\left(\gamma_{j_1,D},\cdots, \gamma_{j_n,D}\right) \\
			& = \sum_{\substack{K\cup L\cup M=\\ \{1,\cdots,n\}}} \left(-1\right)^{|M|} I_{f,K,L,M}^{n,n} +o\left(|D|\right).
		\end{split}
	\end{equation}
	
	Now, we can state the following result.
	
	\begin{theorem}\label{prev}
		Assume Conjecture \ref{Our Ratios Conjecture2.}. Considering the notation defined above, we have
		
		\begin{equation}\label{234}
			\begin{split}
				\left(2\pi i\right)^n  2^n \sum_{D\in\h} \sum\nolimits^{n,R} &f\left(\gamma_{j_1,D}\cdots, \gamma_{j_n,D}\right)\\
				&= \sum_{\substack{K\cup L\cup M=\\ \{1,\cdots,n\}}} \left(-1\right)^{|M|} I_{f,K,L,M}^{n,R} +o\left(|D|\right).
			\end{split}
		\end{equation}
	\end{theorem}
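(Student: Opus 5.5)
The natural route is downward induction on $R$, from $R=n$ to any $0\le R\le n$. The base case $R=n$ is exactly (\ref{n=n}), which follows from Theorem \ref{S_n^{DL}} after replacing $J_{DL}$ by $J^*_{DL}$ via Conjecture \ref{Our Ratios Conjecture2.} and Theorem \ref{J^*_{DL}.}. In $I^{n,n}_{f,K,L,M}$ no integral has been moved, and $\sum\nolimits^{n,n}$ carries no distinctness condition, so it agrees with $S_n^{DL}(f)$.

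For the inductive step, assume (\ref{234}) holds for some $R\ge 1$. I will pass to $R-1$ by moving the $R$-th contour (for $z_R$) from $C_{\pm}$ onto the imaginary axis in every $I^{n,R}_{f,K,L,M}$.

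\begin{itemize}
\item By periodicity of $f$ and of $J^*_{DL}$ in $z_R$ (period $2\pi i/\log q$), the horizontal segments of the rectangle cancel.
\item So the shift changes the integral only by the residues crossed, plus a half-residue convention, since the imaginary-axis integrals are principal values.
\item The only singularities of $J^*_{DL}(z_K\cup -z_L,-z_M)$ in $z_R$ between $C_\pm$ and the axis come from $\zeta_{\mathbb A}(1+\alpha+\beta)$ factors. These sit at $z_R=0$ and at $z_R=\pm z_j$, where $z_j$ is any variable already lying on the imaginary axis, i.e.\ $j>R$.
\item The terms involving $-z_M$ enter only through $\frac{\xx'_D}{\xx_D}$, which is entire, so $M$-variables contribute no poles.
\end{itemize}

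Theorem \ref{Residue} gives $\mathrm{Res}_{\alpha^*=0}=0$, so the pole at $0$ contributes nothing. At $\alpha^*=z_R=-\beta^*$ the same theorem produces three terms:
\[
J^*_{DL}(A'\cup\{\beta^*\},B),\qquad J^*_{DL}(A'\cup\{-\beta^*\},B),\qquad J^*_{DL}(A',B\cup\{-\beta^*\}).
\]
These are exactly the integrands obtained when $z_R$ is identified with $z_j$ (with either sign, or through the $M$-type functional-equation term). Summed over all partitions $K\cup L\cup M$, with the signs $(-1)^{|M|}$ tracked, the residue contributions should reassemble into
\[
\sum_{K,L,M}(-1)^{|M|}\,I^{n-1,\cdot}_{f|_{z_R=z_j},K',L',M'}.
\]
By the induction hypothesis applied in $n-1$ variables, this equals the sum over zeros with $\gamma_{j_R}=\gamma_{j_j}$ for some $j>R$. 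That is precisely the difference
\[
\sum\nolimits^{n,R}-\sum\nolimits^{n,R-1},
\]
i.e.\ the diagonal terms removed by demanding that $j_R$ be distinct from $j_{R+1},\dots,j_n$. The factor $2$ from the $C_+$ and $C_-$ sides matches the $2^n$ normalization, and the evenness of $f$ handles the $\pm\beta^*$ symmetry.

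The main obstacle is this combinatorial matching. I need to check that for each $j>R$, the three residue terms, summed over whether $R$ and $j$ lie in $K$, $L$ or $M$, produce exactly the $(n-1)$-variable expression with the correct sign $(-1)^{|M'|}$ and the correct power of $2$. I would first verify it by hand for $n=2$ (pair correlation), then organize the general case pairwise, exactly as Mason and Snaith do in the number field setting. Throughout, the $o(|D|)$ error is unaffected, because the contours have bounded length.
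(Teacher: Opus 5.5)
Your proposal is correct and follows the same route as the argument the paper relies on; the paper itself only cites Mason--Snaith, Theorem 6.5. That argument is a double induction (on $n$, and downward on $R$): the $z_R$ contour is moved onto the imaginary axis, and Theorem \ref{Residue} turns the half-residues at $z_R=\pm z_k$, $k>R$, into the diagonal terms $j_R=j_k$ that make up $\sum\nolimits^{n,R}-\sum\nolimits^{n,R-1}$.

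The matching you left open does go through. Fix $k>R$. Each of the four placements of $R,k$ in $K\cup L$ contributes
\[
\pi i\bigl(J^*_{DL}(A'\cup\{z_k\},B)+J^*_{DL}(A'\cup\{-z_k\},B)-J^*_{DL}(A',B\cup\{-z_k\})\bigr).
\]
The minus sign on the last term is the one in Theorem \ref{Residue}, not the $+$ written in Section \ref{ndensity}. These three terms are exactly the $k\in K'$, $k\in L'$ and $k\in M'$ parts of the $(n-1)$-variable sum weighted by $(-1)^{|M'|}$, and the total $4\pi i=2\cdot 2\pi i$ is precisely the normalization ratio you need.
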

	
	\begin{remark}
		The proof of Theorem \ref{prev} is similar to the number field case (see \cite[Theorem 6.5]{mason&Snaith}).
	\end{remark}
	
	We finish this section by stating our final result for the $n$-correlation of zeros of families of Dirichlet $L$-functions associated with the quadratic character $\x_D$.
	
	\begin{theorem}[$n$-level Density of Zeros of families of Quadratic Dirichlet $L$-functions] \label{n-density theorem}
		Assume Conjecture \ref{Our Ratios Conjecture2.}. Then
		
		\begin{equation*}
			\begin{split}
				\sum_{D\in\h} &\sum\nolimits^{n,0} f\left(\gamma_{j_1,D},\cdots,\gamma_{j_n,D}\right) \\
				& =\frac{1}{\left(2\pi\right)^n} \sum_{\substack{K\cup L\cup M \\=\{1,\cdots,n\}}} \left(-1\right)^{|M|} \left(\int_0^{\pi /\ln q}\right)^n J_{DL}^*\left(-iz_K\cup iz_L,iz_M\right) \\
				& \ \ \ \times f\left(z_1,\cdots,z_n\right) dz_1\cdots dz_n + o\left(|D|\right).
			\end{split}
		\end{equation*}
	\end{theorem}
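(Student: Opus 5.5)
The plan is to specialize Theorem \ref{prev} to $R=0$ and then reduce the contour integrals to integrals over $[0,\pi/\ln q]$. With $R=0$, the left-hand side of (\ref{234}) is exactly $(2\pi i)^n 2^n\sum_{D\in\h}\sum\nolimits^{n,0} f(\gamma_{j_1,D},\dots,\gamma_{j_n,D})$. On the right, every integral in $I^{n,0}_{f,K,L,M}$ is a principal value integral along the segment from $-i\pi/\ln q$ to $i\pi/\ln q$.

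First, I will parametrize each variable on the imaginary axis as $z_j=it_j$ with $t_j\in[-\pi/\ln q,\pi/\ln q]$. This contributes a factor $i^n$, which cancels the $i^n$ in $(2\pi i)^n$. The integrand becomes $J^*_{DL}(it_K\cup -it_L,-it_M)\,f(-t_1,\dots,-t_n)$, and $f$ is even in each variable.

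Second, to halve each range, I will fold the interval $[-\pi/\ln q,0]$ onto $[0,\pi/\ln q]$ via $t_j\mapsto -t_j$, one variable at a time. Sending $t_j\to -t_j$ for $j\in K$ moves that index from the $+$ shift to the $-$ shift. The sum over decompositions $K\cup L\cup M$ is symmetric under exchanging membership of $j$ between $K$ and $L$. Since $f$ is even, the two halves of each integral therefore combine, and each variable contributes a factor $2$ that cancels one factor of the $2^n$. For $j\in M$, the term $\frac{\xx_D'}{\xx_D}(\tfrac12-\beta)=-\log|D|+\log q$ does not depend on $\beta$, so that half folds trivially and also gives a factor $2$. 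After all $n$ variables are folded, the prefactor is $(2\pi)^n$. Relabelling, in the $K\leftrightarrow L$ sign convention, gives the integrand $J^*_{DL}(-iz_K\cup iz_L,iz_M)f(z_1,\dots,z_n)$ stated in Theorem \ref{n-density theorem}. The error term $o(|D|)$ is unaffected, because the integrals are over a fixed compact range.

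The main obstacle is justifying this folding inside principal value integrals. At $t_j=0$, and whenever $t_j=\pm t_k$, $J^*_{DL}$ has poles; these are the poles analysed in Theorem \ref{Residue}. I would first check that the principal value is symmetric under $t\mapsto -t$; this holds because the excluded neighbourhoods are symmetric. Second, I would check that the residue at $\alpha^*=0$ vanishes, which is the second part of Theorem \ref{Residue}, so that no half-residue at the origin is lost. The residue contributions at $\alpha^*=-\beta^*$ were already accounted for in passing from $R=n$ to $R=0$ in Theorem \ref{prev}. Following \cite{mason&Snaith}, I would verify the symmetry bookkeeping of $J^*_{DL}$ under the swap $K\leftrightarrow L$ directly from its definition (\ref{J*_DL}): swapping replaces a shift $\alpha$ by $-\alpha$ in $A$, and the sum over subsets $\D\subseteq A$ is then reorganized accordingly.
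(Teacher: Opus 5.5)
Your proposal is correct and is essentially the paper's proof. You specialize Theorem \ref{prev} to $R=0$, rotate each variable onto the real segment (the $i^n$ cancels), and halve each range using the evenness of $f$ together with the evenness of the integrand summed over $K\cup L\cup M$ in each variable: the $j\in K$ and $j\in L$ terms swap, and $j\in M$ enters only through the constant $\xx'_D/\xx_D$. No reorganisation of the $\D$-sum is needed for this, since $J^*_{DL}(A,B)$ depends on $A$ only as a set.

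The one thing the paper adds is a check that this summed integrand has no poles on the path: the residues at $z_j=z_k$ from the terms with $j\in K,\,k\in L$ and with $k\in K,\,j\in L$ cancel. That lets the principal values be read as ordinary integrals, which is what your symmetric-excision argument for the principal values is standing in for.
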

	
	\begin{proof}
		Directly from Theorem \ref{prev}, for $R=0,$ we have that
		
		\begin{equation*}
			\begin{split}
				(2\pi i)^n  2^n \sum_{D\in\h} &\sum\nolimits^{n,0} f\left(\gamma_{j_1,D},\cdots,\gamma_{j_n,D}\right)\\
				& \approx \sum_{\substack{K\cup L\cup M \\=\{1,\cdots,n\}}} \left(-1\right)^{|M|} I_{f,K,L,M}^{n,0} \\
				&= \sum_{\substack{K\cup L\cup M \\=\{1,\cdots,n\}}} \left(-1\right)^{|M|}  \left(\int_{-i\pi/\ln q}^{i\pi/\ln q}\right)^n J^*_{DL} \left(z_K\cup-z_L,-z_M\right)\\
				& \ \ \ \times f\left(iz_1,\cdots,iz_n\right)dz_1\cdots dz_n,
			\end{split}
		\end{equation*}
		where the integral above is a principal value integral. Changing the variable to $\gamma_{j,D}=-iz_j$ for all $j$, we have
		
		\begin{equation*}
			\begin{split}
				&\left(2\pi i\right)^n  2^n \sum_{D\in\h} \sum\nolimits^{n,0} f\left(\gamma_{j_1,D},\cdots,\gamma_{j_n,D}\right)\\
				& \approx  \sum_{\substack{K\cup L\cup M \\=\{1,\cdots,n\}}} \left(-1\right)^{|M|}  \left(\int_{-\pi/\ln q}^{\pi/\ln q}\right)^n J^*_{DL} \left(-iz_K\cup iz_L,iz_M\right)f\left(z_1,\cdots,z_n\right)dz_1\cdots dz_n\\
				&=   \left(\int_{-\pi/\ln q}^{\pi/\ln q}\right)^n \sum_{\substack{K\cup L\cup M \\=\{1,\cdots,n\}}} \left(-1\right)^{|M|} J^*_{DL} \left(-iz_K\cup iz_L,iz_M\right) f\left(z_1,\cdots,z_n\right)dz_1\cdots dz_n.
			\end{split}
		\end{equation*}
		Since we have $\frac{\xx'_D}{\mathcal{X}_D}\left(\tfrac{1}{2}+\alpha\right) = \frac{\xx'_D}{\xx_D}\left(\tfrac{1}{2}-\alpha\right)$, independent of $\alpha$, and $f\left( \theta_{j_1},\cdots,\theta_{j_n}\right) = f\left( \pm\theta_{j_1},\cdots,\pm\theta_{j_n}\right),$ is an even function, we can write the sum above as
		
		\begin{equation*}
			\begin{split}
				\sum_{\substack{K\cup L\cup M=\\\{1,\cdots,n\}}} \left(-1\right)^{|M|} J_{DL}^*&\left(-iz_K\cup iz_L,iz_M\right)\\
				& = \left(-1\right)^{|M|} J_{DL}^*\left(iz_K\cup -iz_L,-iz_M\right).
			\end{split}
		\end{equation*}
		Thus, each integral from $-\pi/\log q$ to $\pi/\log q$ is a double of the integral from $0$ to $\pi/\ln q$. Hence,
		
		\begin{equation*}
			\begin{split}
				\left(2\pi\right)^n& 2^n \sum_{D\in\h} \sum\nolimits^{n,0} f\left(\gamma_{j_1,D},\cdots, \gamma_{j_n,D} \right)\\
				&= 2^n \left(\int_0^{\pi/\ln q} \right)^n \sum_{\substack{K\cup L\cup M \\=\{1,\cdots,n\}}} \left(-1\right)^{|M|} J_{DL}^*\left(-iz_K\cup iz_L,iz_M\right) \\
				& \ \ \ \times f\left(z_1,\cdots,z_n\right) dz_1 \cdots dz_n +o(|D|).
			\end{split}
		\end{equation*}
	\end{proof}
	
	The only thing that remains to show is that there are no poles on the path of integration. Since $f$ is holomorphic, we need only to check that the sum
	
	\begin{equation*}
		\sum_{\substack{K\cup L\cup M=\\\{1,\cdots,n\}}} \left(-1\right)^{|M|} J_{DL}^*\left(-iz_K\cup iz_L,iz_M\right)
	\end{equation*}
	has no poles at $z_1=z_2.$ We do not need to check for the pole at $z_1=-z_2$ as $z_i\geqslant 0, $ for $0\leqslant i \leqslant n$ on the path of integration. We have that $J_{DL}^*\left(-iz_K\cup iz_L,iz_M\right)$ has a pole at $z_1=z_2$ if $1\in K,2\in L$ or $2\in K, 1\in L.$ Therefore,
	
	\begin{equation*}
		\begin{split}
			\underset{z_1=z_2}{\mathrm{Res}} &\left(\sum_{\substack{K\cup L\cup M \\=\{1,\cdots,n\}}} \left(-1\right)^{|M|} J_{DL}^*\left(-iz_K\cup iz_L,iz_M\right)\right)\\
			&= \sum_{\substack{K\cup L\cup M \\=\{1,\cdots,n\}}} \left(-1\right)^{|M|} \underset{z_1=z_2}{\mathrm{Res}}\Big(J_{DL}^*\left(-iz_{K\cup\{z_1\}}\cup iz_{L\cup\{z_2\}},iz_M\right) \\
			& \ \ \ + J_{DL}^*\left(-iz_{K\cup\{z_2\}}\cup iz_{L\cup\{z_1\}},iz_M\right)\Big)\\
			&= 0
		\end{split}
	\end{equation*}
	since $ \mathrm{Res}_{x=y}f(x,y)=-\mathrm{Res}_{x=y}f(-x,-y)$. The same argument holds for all poles at $z_i=\pm z_j$ for all $0\leqslant i,j\leqslant n.$ Thus, if $\sum_{K\cup L\cup M=\{1,\cdots,n\}} \left(-1\right)^{|M|} $ $J_{DL}^*\left(-iz_K\cup iz_L\right)$ has a singular set, it has dimension of $n-1$ or smaller. But from the theory of several complex variables, this implies the singular set is trivial \cite{krantz}. Therefore, there cannot be any poles on the path of integration.

	
	\section{An Example of Calculation}\label{1-level}

	In this section, we calculate the one-level density of zeroes of the family of $L$-functions associated with $\x_D$ to exemplify the n-level calculation carried out in Section \ref{ndensity}. We compare our calculation in this section with those of Andrade and Keating appearing in Theorem \ref{andrade&keating n=1}.

	\subsection{Evaluate $S_1^{DL}$}\label{Evaluate $S_1^{DL}$}
	
	Let $C_{-}$ denote the path from $-c-\pi i/\log q$ up to $-c+\pi i/\log q$ and  $C_{+}$ denote the path from $c-\pi i/\log q$ up to $c+\pi i/\log q$, where $3/4>c>1/2+1/\log|D|$. Let $f$ be an even $(2\pi/\log q)$--periodic and holomorphic test function of one variable such that
	
	\begin{equation*}
		f(\theta) =f(-\theta),
	\end{equation*}
	then,
	
	\begin{equation}\label{S_1^{DL}}
		S_1^{DL} = \sum_{D\in\h} \sum_{j>0} f\left(\gamma_{j,D}\right),
	\end{equation}
	where $\gamma_{j,D}$ is the $j^{\text{th}}$ zero of the $L$-function $L(s,\x_D)$. Hence, follows from Theorem \ref{n-density theorem} that
	
	\begin{equation*}
		\begin{split}
			2 &(2\pi i)  S_1^{DL}(f)\\
			&= \left(\int_{C_{+}} - \int_{C_{-}}\right) \sum_{D\in\h} \frac{L'}{L}\left(\tfrac{1}{2}+\alpha,\x_D\right) f(-i\alpha) d\alpha\\
			&= \sum_{\substack{K\cup L\cup M\\=\{1\}}} \int_{C_{+}^K} \int_{C_{-}^{L\cup M}} \left(-1\right)^{|M|} J_{DL}\left(z_K\cup -z_L,- z_M\right) f(-iz_1) dz_1\\
			&= \sum_{\substack{K\cup L\cup M\\=\{1\}}} \int_{C_{+}^K} \int_{C_{-}^{L\cup M}} \left(-1\right)^{|M|} J_{DL}^*\left(z_K\cup -z_L,-z_M\right) f(-iz_1) dz_1 +o\left(|D|\right).
		\end{split}
	\end{equation*}
	We move the integration onto the imaginary axis as there are no poles when we only have a single variable. Thus,
	
	\begin{equation*}
		\begin{split}
			2&(2\pi) S_1^{DL}\\
			&= \int_{-\pi/\log q}^{\pi/\log q} f(z) \Big[ \Big(J_{DL}^*(iz,\emptyset)+ J_{DL}^*(-iz,\emptyset)\Big) - J_{DL}^*(\emptyset,-iz)\Big]dz + o(|D|),
		\end{split}
	\end{equation*}
	where $J_{DL}^*(A,B)$ is defined as
	
	\begin{equation*}
		\begin{split}
			J^*_{DL}(A,B) &= \sum_{D\in\h} \prod_{\beta\in B} \frac{\xx'_D}{\xx_D}\left(\tfrac{1}{2}-\beta\right) \sum_{\D\subseteq A} \left|D\right|^{- \sum_{\delta\in \D}\delta }   \\
			&\  \ \ \times \prod_{\delta\in \D} \xx\left(\tfrac{1}{2}+\delta\right) \sqrt{\frac{ Z_{\z}(\D^{-},\D^{-}) Z_{\z}(\D,\D) Y_{\z}(\D^{-})  }{  Z_{\z}^\dag( \D^{-},\D)^2  Y_{\z}(\D)}} \\
			& \ \ \ \times\left( -\log q\right)^{\#\D} A_{DL}(\D,\D,\D) \sum_{A\setminus \D = W_1+\cdots+W_R} \prod_{r=1}^R \widetilde{H}^{A,B}_\D(W_r),
		\end{split}
	\end{equation*}
	where
	
	\begin{equation*}
		\begin{split}
			\widetilde{H}^{A,B}_\D (W_r)
			&= H^{A,B}_\D(W_r)+A^{A,B}_\D(W_r),
		\end{split}
	\end{equation*}
	with
	
	\begin{equation*}
		\begin{split}
			H^{A,B}_\D(W_r) &= \begin{cases}
				\underset{\beta\in \D}{\sum} \left(\frac{\z'}{\z}(1+\gamma-\beta) - \frac{\z'}{\z}(1+\gamma+\beta)\right)+\frac{\z'}{\z}(1+2\gamma) &, W_r=\{\gamma\}\\
				\left(\frac{\z'}{\z}\right)' (\gamma_1+\gamma_2) &, W_r=\{\gamma_1,\gamma_2\}\\
				0 &, |W_r|\ge 3,
			\end{cases}
		\end{split}
	\end{equation*}
	\begin{equation*}
		A^{A,B}_\D(W_r) = \prod_{\alpha\in W_r} \frac{\partial}{\partial \alpha} \log A_{DL} (A,B,\D) \Bigg\vert_{B=A},
	\end{equation*}
	and $A_{DL}(A,B,\D),Z_{\z}(A,B,\D),Y_{\z}(A,B,\D)$ are defined in equations (\ref{Zz1})--(\ref{Y_{DL}1}).
	
	
	\subsection{Checking $S^{DL}_1$ against Andrade and Keating result}
	
	Recall, Andrade and Keating's one-level density in Theorem \ref{andrade&keating n=1} is given by

	\begin{equation*}
		\begin{split}
			S_1(f)&= \sum_{D\in\h} \sum_{\gamma_D} f(\gamma_D) =  \frac{1}{2\pi} \int_{-\pi/\ln q}^{\pi/\ln q} f(z) \sum_{D\in \h} \Bigg[ \log|D|+\frac{\xx'}{\xx}(\frac{1}{2}-iz) \\
			& \ \ \ + 2  \Bigg(  \frac{\z'}{\z}(1+2iz)+ A_D'(iz;iz) - \left(\ln q\right) |D|^{iz} \xx\left(\tfrac{1}{2}+iz\right) \z(1-2iz)  \\
			& \ \ \ \times A_D(-iz,iz)\Bigg) \Bigg]dz + o\left(|D|\right),
		\end{split}
	\end{equation*}
	where $\gamma_D$ is the ordinate of a generic zero of $L(s,\x_D)$ and $f$ is an even and periodic suitable test function, and $A_D'(r,r)$ and $A_D(-r,r)$ are defined in equation (\ref{AD}). 
	
	One should note that $S_1(f)$ is different from our definition of $S_1^{DL}(f)$ given in equation (\ref{S_1^{DL}}), where $S_1(f)$ is a sum over all zeros of the given $L$-function while $S_1^{DL}(f)$ is a sum only over the positive zeros of the given $L$-function. We expect $S_1(f)=2S_1^{DL}(f).$ 
	
	Its easy to see that the result matches perfectly with $S_1^{DL}(f)$. The term containing $\log|D|-\frac{\xx'}{\xx}\left(\frac{1}{2}-iz\right)$ corresponds exactly to $J_{DL}^*(\emptyset,iz)$.
	
	If we consider the parts of $S_1^{DL}(f)$ corresponding to $J_{DL}^*\left(iz,\emptyset\right) +J_{DL}^*\left(-iz,\emptyset\right)$, we have that
	
	\begin{equation*}
		\begin{split}
			\int_{-\pi/\log q}^{\pi/\log q} f(z) & J_{DL}^*(iz,\emptyset) dz +\int_{-\pi/\log q}^{\pi/\log q} f(z)  J_{DL}^*(-iz,\emptyset) dz\\
			&=  2  \int_{-\pi/\log q}^{\pi/\log q} f(z)  \sum_{D\in\h} J_{DL}^*(iz,\emptyset)  dz,
		\end{split}
	\end{equation*}
	where we make a change of variable, letting $z\to-z$ in the second integral and use the fact that $f(z)$ is an even function. 
	
	Now, for the parts of $S_1^{DL}(f)$ corresponding to $J_{DL}^*\left(iz,\emptyset\right) +J_{DL}^*\left(-iz,\emptyset\right)$ when $\D=\emptyset,$ we have that
	
	\begin{equation*}
		\begin{split}
			\int_{-\pi/\log q}^{\pi/\log q} f(z)  & \left(J_{DL}^*(iz,\emptyset)+ J_{DL}^*(-iz,\emptyset)\right) dz\\
			& =  2 \int_{-\pi/\log q}^{\pi/\log q} f(z)  \sum_{D\in\h} \left(\frac{\z'}{\z}(1+2iz)+A'_D(iz,iz)\right) dz.
		\end{split}
	\end{equation*}
	
	On the other hand, for the parts of $S_1^{DL}(f)$ corresponding to $J_{DL}^*\left(iz,\emptyset\right) +J_{DL}^*\left(-iz,\emptyset\right)$ when $\D\neq\emptyset,$ we have
	
	\begin{equation*}
		\begin{split}
			&\int_{-\pi/\log q}^{\pi/\log q} f(z)  \left(J_{DL}^*(iz,\emptyset)+ J_{DL}^*(-iz,\emptyset)\right) dz \\
			&\ \ \ =  -2\log q \int_{-\pi/\log q}^{\pi/\log q} f(z)  \sum_{D\in\h} \Big(|D|^{-i z} \xx\left(\tfrac{1}{2}+iz\right) \z(1-2iz)A_D(-iz,iz) \Big) dz.\\
		\end{split}
	\end{equation*}
	
	Combining all the above results, we have that
	
	\begin{equation*}
		\begin{split}
			&\int_{-\pi/\log q}^{\pi/\log q}  f(z)   \left(J_{DL}^*(iz,\emptyset)+ J_{DL}^*(-iz,\emptyset)- J^*_{DL}(\emptyset,-iz\right) dz \\
			&\ \ \ = \int_{-\pi/\log q}^{\pi/\log q} f(z)  \sum_{D\in\h} \Bigg[ \log|D|+\frac{\xx'}{\xx}(\frac{1}{2}-iz) + 2  \Bigg(  \frac{\z'}{\z}(1+2iz)+ A_D'(iz;iz) \\
			& \ \ \ \ \ \  - \left(\ln q\right) |D|^{iz} \xx\left(\tfrac{1}{2}+iz\right) \z(1-2iz) A_D(-iz,iz)\Bigg)   \Bigg]dz.
		\end{split}
	\end{equation*}
	
	Hence, we can see that our result in Theorem \ref{n-density theorem} when $n=1$ agrees with the result of Theorem \ref{andrade&keating n=1}.

	\vspace{1.0cm}
	
	\noindent \textbf{Acknowledgment:} The first author is grateful to the Leverhulme Trust (Grant No. RPG-2017-320) for the support through the research project grant “Moments of $L$-functions in Function Fields and Random Matrix Theory”. The research of the second author is supported by the Public Authority for Applied Education and Training of Kuwait (PAAET).
\\

\noindent For the purpose of open access, the authors have applied a Creative Commons Attribution (CC BY) licence to any Author Accepted Manuscript version arising from this submission.

	
\end{document}